\newcommand{\includefigure}[2]{\includegraphics[#1]{#2}}
\numberwithin{equation}{section}
\newtheorem{theorem}{Theorem}[section]
\newtheorem{proposition}[theorem]{Proposition}
\newtheorem{lemma}[theorem]{Lemma}
\theoremstyle{definition}%{plain}
\newtheorem{remark}[theorem]{Remark}
\newtheorem{defn}[theorem]{Definition}
\newcommand{\pist}{{\pi}} %%Poisson structure%%%
\newcommand{\pimap}{{P}} %%%Poincare map%%%
\newcommand{\cal}{\mathcal}
\newcommand{\CC}{{\cal C}}
\newcommand{\UU}{{\cal U}}
\newcommand{\Nn}{{\mathbb{N}}}
\newcommand{\Rr}{{\mathbb{R}}}
\newcommand{\R}{{\mathbb{R}}}
\newcommand{\Zz}{{\mathbb{Z}}}
\newcommand{\ie}{\textit{i}.\textit{e}., }
\newcommand{\Ker}{\mathrm{Ker}}
\newcommand{\Eq}{\mathrm{Eq}}
\def\diag{\operatorname{diag}}
\def\j{\operatorname{j{}}}
\def\C{\operatorname{C{}}}
\newcommand{\matbrackets}[2]{\left[\, {#1} \right]_{{#2}} }
\newcommand{\unit}{\mathbbm{1}}
\renewcommand{\d}{\mathrm d}
\newcommand{\Mat}{{\rm Mat}}
\newtheorem*{theorem*}{Theorem}
\newcommand{\id}  {\operatorname{id}}
\newcommand{\comment}[1]{}
\newcommand{\Bscr}{\mathscr{B}}
\newcommand{\Dscr}{\mathscr{D}}
\newcommand{\nund}{{\underline{n}}}
\newcommand{\inter}{{\rm int}}
\newcommand{\Poin}[2]{P_{#2}}
\newcommand{\skPoin}[2]{\pi_{#2}}
\newcommand{\skDomPoin}[2]{\Pi_{#2}}
\begin{document}
\selectlanguage{english}
%\title[Hamiltonian Assymptotic Poincare Maps]{Assymptotic Poincare Maps along the %edges of Polytopes (II)\\
 %\textnormal{\tiny  Hamiltonian systems}}
 
\title[Hamiltonian Polymatrix Replicators]{Asymptotic Dynamics of Hamiltonian Polymatrix Replicators}

% \setkomafont{subtitle}{\normalfont\Large}
%\date{May 2014}
\date{\today}  
\subjclass[2010]{34D05, 37J06, 37J46, 53D17, 91A22}
\keywords{Hamiltonian polymatrix replicator system, Poisson structure, Poincar\'e map, Asymptotic dynamics, Heteroclinic network.}
\author[Alishah]{Hassan Najafi Alishah}
\address{ Departamento de Matem\'atica, Instituto de Ci\^encias Exatas\\
Universidade Federal de Minas Gerais \\
31123-970 Belo Horizonte\\
MG - Brazil }
\email{halishah@mat.ufmg.br}

\author[Duarte]{Pedro Duarte}
\address{Departamento de Matem\'atica and CMAF \\
Faculdade de Ci\^encias\\
Universidade de Lisboa\\
Campo Grande, Edificio C6, Piso 2\\
1749-016 Lisboa, Portugal 
}
\email{pduarte@fc.ul.pt}

\author[Peixe]{Telmo Peixe}
\address{ISEG-Lisbon School of Economics \& Management\\
Universidade de Lisboa\\
REM-Research in Economics and Mathematics\\
CEMAPRE-Centro de Matem\'atica Aplicada \`a Previs\~ao e Decis\~ao Econ\'omica\\
Lisboa, Portugal.
}
\email{telmop@iseg.ulisboa.pt}

\begin{abstract}
In a previous paper~\cite{ADP2020} we have studied flows defined on polytopes, presenting a new method to encapsulate its asymptotic dynamics along  the edge-vertex heteroclinic network.
These results apply to the class of polymatrix replicator systems, which contains several important models in Evolutionary Game Theory.
Here we establish the Hamiltonian character of the asymptotic dynamics of Hamiltonian polymatrix replicators.
\end{abstract}

\maketitle
%\tableofcontents
\addtocontents{toc}{\protect\setcounter{tocdepth}{1}}
%to not appear subsection titles in the table of contents

%%%%%%%%%%%%%%%%%%%%%%%%%%%  Introduction  %%%%%%%%%%%%%%%%%%%%%%%%%%%%%%%%%%%%%%%%%%%%%%%%%

\section{Introduction}
\label{sec:intro}

A new method to study the asymptotic dynamics of flows defined on polytopes
was presented in~\cite{ADP2020}.
This method allows us to analyze the asymptotic dynamics of flows defined on polytopes along
the edge-vertex heteroclinic network.
Examples of such dynamical systems arise naturally in the context of Evolutionary Game Theory (EGT) developed by J. Maynard Smith and G. R. Price~\cite{smith1973logic}.

One such example is the \textit{polymatrix replicator}, introduced in~\cite{AD2015,ADP2015}.
This is a system of ordinary differential equations  that models
the time evolution of behavioral strategies of individuals in a stratified population.

The polymatrix replicator induces a flow on a prism (simple polytope) given by a finite product of simplices.
These systems extend the class of the replicator and the bimatrix replicator equations studied e.g. in~\cite{TJ1978} and~\cite{schuster1981coyness,schuster1981selfregulation}, respectively.

In~\cite{AD2015} the authors have introduced the subclass of
conservative polymatrix replicators (see Definition~\ref{CPR}) which are Hamiltonian systems with respect to appropriate Poisson structures.
In~\cite{ADP2015} these Hamiltonian polymatrix replicators are used to describe the asymptotic dynamics of the larger class of dissipative polymatrix replicators.

For  Hamiltonian vector fields on symplectic manifolds it is well known that  the Poincar\'e map preserves  the induced symplectic structure on any transversal section. In this paper we extend  this  fact to Hamiltonian systems on Poisson manifolds,  showing that any transversal section inherits a Poisson structure and  the Poincar\'e map preserves it.
Using this result we  study the Hamiltonian character of the asymptotic dynamics of conservative polymatrix replicators along their edge-vertex heteroclinic network.
Our main result states that for conservative polymatrix replicators the map describing the asymptotic dynamics is Hamiltonian with respect to an appropriate Poisson structure (Theorem~\ref{main theorem}).

The paper is organized as follows. 
In Section~\ref{sec:outline} we recall the method in~\cite{ADP2020}, outlining the construction of the asymptotic dynamics for a large class of flows on polytopes that includes the polymatrix replicators.
In Section~\ref{sec:poisson-poincare} we define Poincar\'e maps for Hamiltonian systems on Poisson manifolds.
In Section~\ref{sec:polymatrix-replicators} we  provide a short introduction to polymatrix replicators, following~\cite{AD2015}. Namely, we state the basic definitions and results for the class of conservative polymatrix replicators, that we also  designate as \textit{Hamiltonian polymatrix replicators}.
In Section~\ref{sec:asymptotic-polymatrix-replicator} we review the main definitions and results for the polymatrix replicator systems regarding the construction outlined in Section~\ref{sec:outline}.
In Section~\ref{sec:hamiltonian-character} we analyze the Hamiltonian character of Poincar\'e maps in the case of Hamiltonian polymatrix replicators. 
Finally, in Section~\ref{sec:example} we present an example of a five-dimensional Hamiltonian polymatrix replicator to illustrate the main concepts and results of this paper. The graphics of this section were produced with \textit{Wolfram Mathematica} and \textit{Geogebra} software.

%%%%%%%%%%%%%%%%%%%%%%%%%%%%%%%%%%%%%%%%%%
\section{Outline of the construction}
\label{sec:outline}
%\input{outline-construction.tex}

%Asymptotic dynamics, along the edges, of flows on polytopes
We now outline the construction of the asymptotic dynamics
for a large class of flows on polytopes that includes the 
polymatrix replicators.
A {\em polytope} is a compact convex set in some Euclidean space obtained as the intersection of finitely many half-spaces.
A polytope is called {\em simple} if the number of edges (or facets) incident with each vertex equals the polytope's dimension.
The phase space of polymatrix replicators, that are prisms given by a finite product of simplices, are examples of simple polytopes.
In~\cite{ADP2020} we consider analytic vector fields defined on simple polytopes which have the property of being  tangent to every face of the polytope.
Such vector fields induce complete flows on the polytope
 which leave all faces  invariant.
Vertices of the polytope are singularities of the vector field,
while edges without singularities, called {\em flowing edges}, consist of single orbits flowing between two end-point vertices.
The  vertices and flowing edges form a heteroclinic network of the vector field. The purpose of this construction is to analyze the asymptotic dynamics  of the vector field along this one-dimensional skeleton. Throughout the text we assume that every vector field is
{\em non-degenerate}. This means that the transversal derivative of the vector field is never identically zero along any facet of the polytope.

%% Poincare Maps
The  analysis of the vector field's dynamics  along  its edge-vertex heteroclinic network makes use of Poincar\'e maps  between cross sections tranversal to the flowing edges.
Any Poincar\'e map along a heteroclinic or homoclinic orbit  is a composition of two types of maps,  global  and local Poincar\'e maps. 
A {\em global map},  denoted by $P_\gamma$, is defined in a tubular neighborhood of any flowing-edge $\gamma$. It maps points between two cross sections $\Sigma_\gamma^-$ and $\Sigma_\gamma^+$ transversal to the flow along the edge $\gamma$. 
A {\em local map}, denoted by $P_v$, is defined in a neighborhood of any vertex $v$. For any pair of flowing-edges  $\gamma,\gamma'$ 
such that $v$ is both the ending point of $\gamma'$ and the starting point of $\gamma$, the local map $P_v$ takes points from $\Sigma_{\gamma'}^+$
to  $\Sigma_{\gamma}^-$ (see Figure~\ref{local-global-poincare}).

\begin{center}
	\begin{figure}[h]
		\includegraphics[width=6cm]{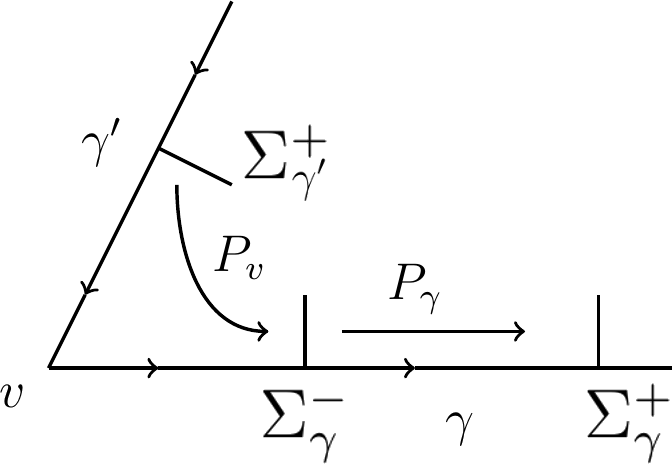}
		\caption{\footnotesize{Local and global  
			Poincar\'e maps along a heteroclinic orbit.}} \label{local-global-poincare}
	\end{figure}
\end{center}

% Skeleton character
Asymptotically, the nonlinear character of the  global Poincar\'{e} maps fade away as we approach  a heteroclinic orbit. This means that these non-linearities are irrelevant for the asymptotic analysis.
For regular\footnote{The reader should bare in mind that the concept of regularity used here (Definition~\ref{def:regular}) is more restrictive then the one in~\cite[Definition $6.3$]{ADP2020}.} vector fields, the  {\em skeleton character}  at a vertex,
defined as the set of eigenvalues of the tangent map along the edge eigen-directions, completely determines the asymptotic behavior of
the local Poincar\'e map at that vertex.

\begin{center}
	\begin{figure}[h]
		\includegraphics[width=10cm]{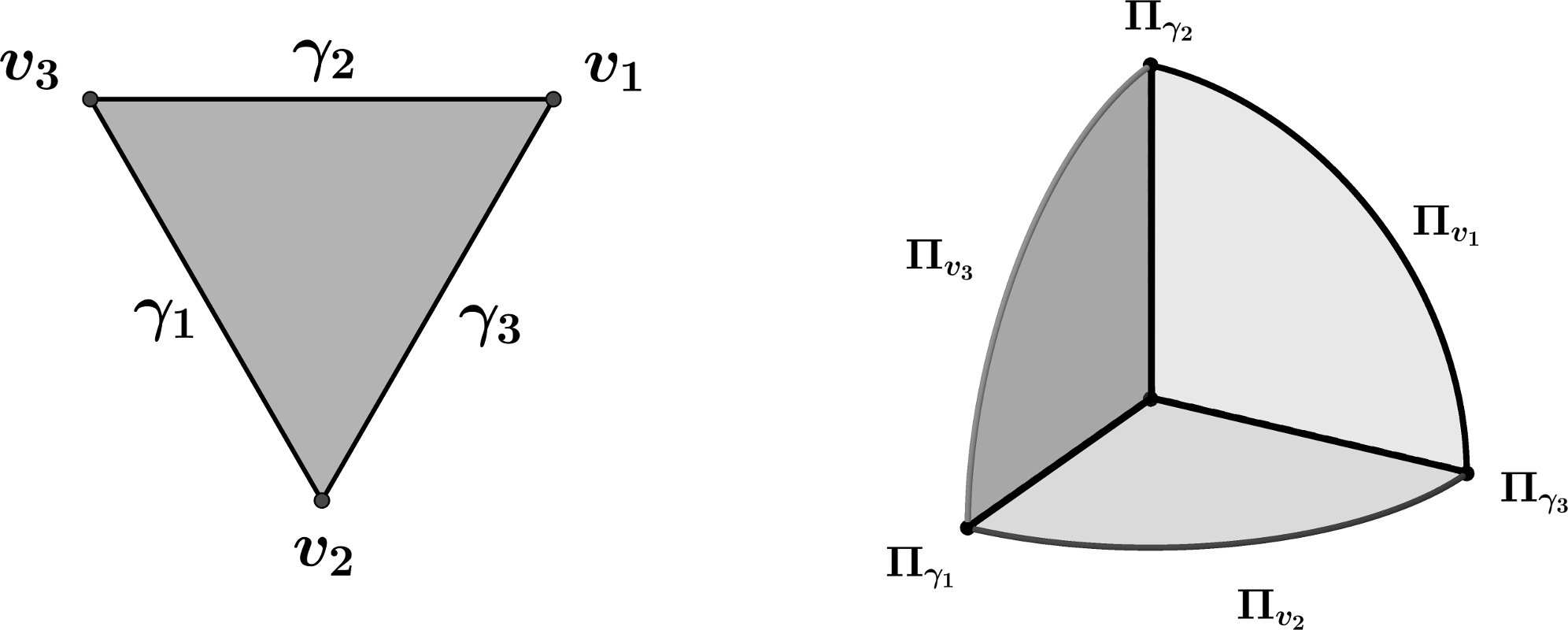}
		\caption{\footnotesize{Dual cone of a triangle in $\Rr^{F}$.}} \label{dualcone1}
	\end{figure}
\end{center}

%% The dual cone 
To describe the limit dynamical behavior we introduce   the {\em dual cone} of a polytope where the asymptotic piecewise linear dynamics  unfolds.
This space lies inside $\Rr^{F}$, where $F$ is the set of the polytope's facets. 
The dual cone of a $d$-dimensional simple polytope $\Gamma$ is the union 
$$\CC^\ast(\Gamma) :=\bigcup_{v\in V}\Pi_v\;, $$
where for each vertex $v$,
$\Pi_v$ is the $d$-dimensional sector consisting
of points $y\in\R^F$ with non-negative coordinates such that
$y_\sigma=0$ for every facet that does not contain $v$.
See Figure~\ref{dualcone1}.

Given a vector field $X$  on a 
$d$-dimensional polytope $\Gamma\subset \Rr^d$, we now describe a rescaling change of coordinates $\Psi_\epsilon^X$, depending on a blow up parameter $\epsilon$. See Figure~\ref{asym-linearisation}.

\begin{center}
	\begin{figure}[h]
		\includegraphics[width=11cm]{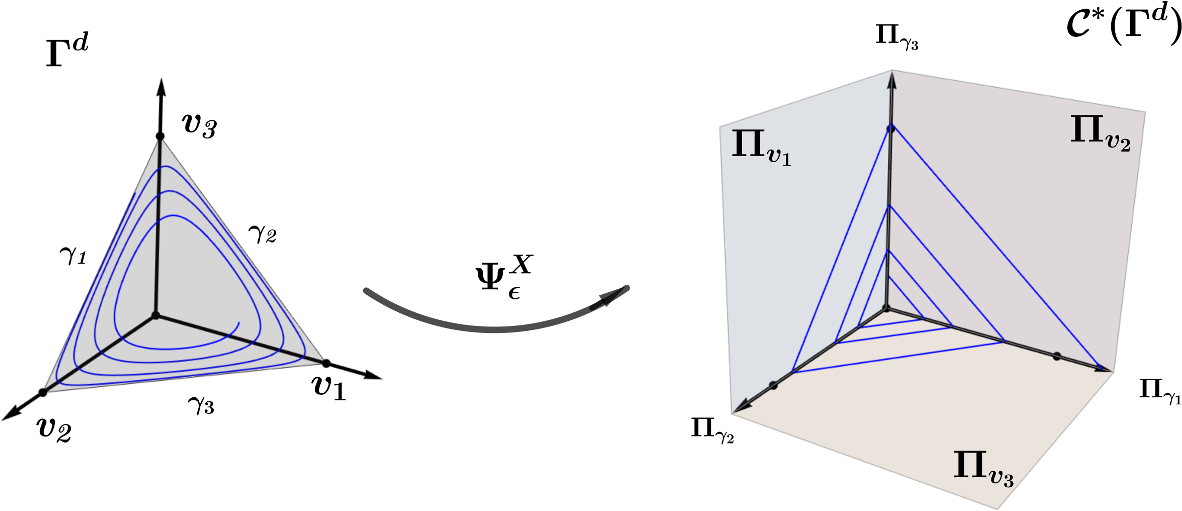}
		\caption{\footnotesize{Asymptotic linearisation on the dual cone.
			The left image represents an orbit on the simplex $\Delta^2$ and the right one
			the corresponding (nearly) piecewise linear image under the map  $\Psi_\epsilon^X$ on the dual cone.}
		}\label{asym-linearisation} 
	\end{figure}
\end{center}

This change of coordinates  maps  tubular neighborhoods of edges and vertices to  the dual cone  $\CC^\ast(\Gamma)$. 
For instance,  the tubular neighborhood $N_v$ of a vertex $v$ is defined as follows.
Consider a system $(x_1,\ldots, x_d)$  of affine coordinates around $v$,
which assigns   coordinates $(0,\ldots, 0)$ to $v$ and such that the hyperplanes $x_j=0$ are precisely the facets of the polytope through $v$. 
Then $N_v$ is defined by
$$ N_v:=\{ p\in\Gamma^d\colon  0\leq x_j(p) \leq 1 \,\text{ for }\, 1\leq j\leq d  \} . $$
The sets $\{x_j=0\}\cap N_v$  are
called the outer facets of $N_v$. The remaining facets of $N_v$, defined by equations like $x_i=1$, are called the inner facets of $N_v$.
The previous cross sections $\Sigma_\gamma^\pm$  can be chosen to match these inner facets of the neighborhoods  $N_v$.

%% The rescaling coordinates
The rescaling change of coordinates $\Psi^X_\epsilon$
maps  $N_v$  to  the sector $\Pi_v$.
Enumerating $F$  so that the  facets through $v$ are precisely   $\sigma_1,\ldots, \sigma_d$,  
the map $\Psi^X_\epsilon$ is defined on the neighborhood $N_v$  by
$$ \Psi^X_\epsilon(q):=(-\epsilon^2\,\log x_1(q), \ldots,
-\epsilon^2\,\log x_d(q),0,\ldots, 0  ) . $$
Similarly, given an edge $\gamma$,   $\Psi^X_\epsilon$ maps
a tubular neighborhood $N_\gamma$  of  $\gamma$ to  the facet sector $\Pi_\gamma:=\Pi_v\cap \Pi_{v'}$ of $\Pi_v$ where $v'$ is the other end-point of $\gamma$. The map  $\Psi^X_\epsilon$  sends interior facets of $N_v$ and $N_\gamma$ respectively to boundary facets of
$\Pi_v$ and $\Pi_\gamma$ while it  maps outer facets 
of $N_v$ and $N_\gamma$   to infinity. 
%% The skeleton vector field
As the rescaling parameter $\epsilon$ tends to $0$, 
the rescaled push-forward $\epsilon^{-2}\,(\Psi^X_\epsilon)_\ast X$ of the vector field $X$ converges to a constant vector field $\chi^v$  on each sector $\Pi_v$. This means that asymptotically, as $\epsilon\to 0$, trajectories become lines in the coordinates   $(y_\sigma)_{\sigma\in F}=\Psi^X_\epsilon$.
Given a flowing-edge $\gamma$ between vertices $v$ and $v'$, the map $\Psi^X_\epsilon$ over $N_\gamma$ depends only on the  coordinates 
transversal to $\gamma$. Moreover,
as  $\epsilon\to 0$ the global Poincar\'e map $P_\gamma$ converges to the identity map in the coordinates $(y_\sigma)_{\sigma\in F} =\Psi^X_\epsilon$.
Hence the sector
$\Pi_\gamma$ is  naturally  identified as the common facet between the sectors $\Pi_v$ and $\Pi_{v'}$.
 Hence the asymptotic dynamics along the edge-vertex heteroclinic network is completely determined by the vector field's geometry at the vertex singularities and can be 
described by a piecewise constant vector field $\chi$ on the dual cone, whose components are precisely those of the skeleton character of  $X$. We  refer to this piecewise constant vector field as the {\em skeleton vector field} of $X$.  This  vector field $\chi$ induces a piecewise linear flow on the dual cone   whose dynamics can be  computationally explored.

We use  Poincar\'e maps for a global analysis of the asymptotic dynamics of the flow of $X$.
We consider a subset $S$ of flowing-edges with the property  that every  heteroclinic cycle goes through at least one edge in $S$.
Such sets are called structural sets.
The flow of $X$ induces a Poincar\'e  map $P_S$
on the system of cross sections $\Sigma_S:= \cup_{\gamma\in S}\Sigma^+_\gamma$. Each branch of the Poincar\'e  map $P_S$ is associated with a  heteroclinic path  starting with an edge in $S$ and ending at its first return to another edge in $S$. These heteroclinic paths are the branches of $S$. The flow of the skeleton vector field $\chi$ also induces a  first return map
$\pi_S:D_S\subset \Pi_S\to\Pi_S$ on the system of cross sections $\Pi_S:= \cup_{\gamma\in S}\Pi_\gamma$. This map $\pi_S$,
called the \textit{skeleton flow map}, is piecewise linear and its domain is a finite union of open convex cones.
In some cases, see Proposition~\ref{partition}, the map
 $\pi_S$ becomes a closed dynamical system.

%% The main result
We can now recall  
	the main result in \cite{ADP2020}, Theorem~\ref{asymp:main:theorem} below, which says that  under the rescaling change of coordinates
	$\Psi^X_\epsilon$, the 
	Poincar\'e map $P_S$ converges in the $C^\infty$ topology to
	the skeleton flow map $\pi_S$, in the sense that the following limit holds
	$$ \lim_{\epsilon\to 0 }  \Psi^X_\epsilon \circ \Poin{X}{S}\circ
	(\Psi^X_\epsilon)^{-1} = \pi_S  $$
	with uniform convergence of the map and its derivatives over any compact set contained in the domain $D_S\subset \Pi_S$.

%% Invariant functions
Consider now, for each facet $\sigma$ of the polytope, an affine function $\Rr^d\ni q\mapsto x_\sigma(q)\in\Rr$ which vanishes on $\sigma$  and is strictly positive on the rest of the polytope. With this family of affine functions we can present the polytope  as 
$\Gamma^d=\cap_{\sigma\in F} \{  x_\sigma \geq 0  \}$.
Any function function $h:\inter(\Gamma^d)\to\Rr$ of the form
$$h(q)=\sum_{\sigma\in F} c_\sigma\, \log x_\sigma(q)\quad (c_\sigma\in\Rr) $$
rescales to the following  piecewise linear function on the dual cone  
$$\eta(y):=\sum_{\sigma\in F} c_\sigma\, y_\sigma $$
in the sense that
$\eta = \lim_{\epsilon\to 0} \epsilon^{-2}\, (h\circ
(\Psi^X_\epsilon)^{-1} ) $.
When all coefficients $c_\sigma$ have the same sign
then $\eta$ is a proper function on the dual cone and  all levels of $\eta$ are compact sets.
If the function $h$ is invariant under the flow
of  $X$, i.e.
$h\circ P_S=h$, then the piecewise linear function
$\eta$ is also invariant under the skeleton flow,
i.e. $\eta\circ \pi_S=\eta$.
Thus integrals of motion (of vector fields on polytopes) of the previous form give rise to (asymptotic) piecewise linear  integrals of motion for the skeleton flow.

%%%%%%%%%%%%%%%%%%%%%%%%%%%%%%%%%%%%%%%%%%
\section{Poisson Poincar\'e maps}
\label{sec:poisson-poincare}

In this section we will define Poincar\'e map for Hamiltonian systems   on  Poisson manifolds. For  Hamiltonian vector fields on symplectic manifolds it is well known that  the Poincar\'e map preserves  the induced symplectic structure on any transversal section (see~\cite{MR2554208}*{Theorem 1.8.}). We extend  this  fact to Hamiltonian systems on Poisson manifolds,  showing that any transversal section inherits a Poisson structure and  the Poincar\'e map preserves this structure.

A Poisson manifold is a pair $(M,\pi)$ where $M$ is a smooth manifold without boundary and $\pist$ a Poisson structure on $M$.
Recall that a Poisson structure is a smooth bivector field $\pist$ with the property that $[\pist,\pist]=0$, where $[\cdot, \cdot]$ is the Schouten bracket  (cf. e.g.~\cite{MR2178041}).
The bivector field $\pi$ defines a vector bundle map 
\begin{equation}\label{eq:pi-sharp} \pi^\sharp\colon T^\ast  M\rightarrow TM\quad\mbox{by}\quad \xi\to\pist(\xi,.).
\end{equation}
The image of this map is an integrable singular distribution which integrates to a symplectic foliation, i.e., a foliation whose leaves have a symplectic structure induced by the Poisson structure. 

Notice that a Poisson structure can also be defined as a Lie bracket $\{\cdot,\cdot\}$ on $C^\infty(M)\times C^\infty(M)$ satisfying the Leibniz rule
$$
\{f,gh\}=\{f,g\}h+g\{f,h\},
\qquad
f,g,h\in C^\infty(M).
$$
These two  descriptions are related by $\pist(\d f,\d g)=\{f,g\}$.
In a local coordinate chart $(U,x_1,..,x_n)$, or equivalently  when $M=\Rr^n$,  a Poisson bracket takes the form
\[\{f,g\}(x)=(\d_x f)^t
\matbrackets{\pi_{ij}(x)}{ij} \d_x g,\]
where  $\pi(x)=\matbrackets{\pi_{ij}(x)}{ij}= \matbrackets{\{x_i,x_j\}(x)}{ij}$   is 
a skew symmetric matrix valued smooth function satisfying 
\begin{equation*}\label{eq:condition:Poisson}
\sum_{l=1}^{n}  \frac{\partial\pi_{ij}}{\partial x_l}\pi_{lk}+\frac{\partial\pi_{jk}}{\partial x_l}\pi_{li}+\frac{\partial\pi_{ki}}{\partial x_l}\pi_{lj} =0\quad\quad \forall i,j,k\;.
\end{equation*}

\begin{defn}\label{def:Poissonmap}
Let $(M,\{,\}_M)$ and $(N,\{.,.\}_N)$ be two Poisson manifolds. A smooth map $\psi:M\to N$ will be called a {\em Poisson map} iff
\begin{equation}\label{eq:Poissoncondition2-1}
\{f\circ\psi,h\circ\psi\}_M=\{f,h\}_N\circ\psi\quad \forall f,h\in C^\infty(N).
\end{equation}
Using the map $\pi^\sharp$, defined in \eqref{eq:pi-sharp}, this condition reads as
\begin{equation}\label{eq:Poissoncondition2}
(d\psi) \pi_M^\sharp (d\psi)^\ast=\pi_N^\sharp \circ \psi ,
\end{equation}
where we use the notation $(d\psi)^\ast$ to denote the adjoint operator of  $d\psi$.
Notice that, if $d\psi$ is the Jacobian matrix of $\psi$ in local coordinates, then the matrix representative of the pullback will be $(d\psi)^t$. 
\end{defn}
\begin{remark}\label{remark:push-pull-Poisson}
When $\psi$ is a diffeomorphism and only one of the manifolds $M$ or $N$ is Poisson manifold, Definition~\ref{def:Poissonmap} can be used to push-forward or pullback the Poisson structure to the other manifold. 
\end{remark}
\begin{defn}
Let $(M,\pi)$ be a Poisson manifold. The Hamiltonian vector field associated to a given function $H:M\to\mathbb{R}$  is defined by derivation $X_H(f):=\{H,f\}$ \, for $f\in C^\infty(M)$, or equivalently $X_H:=\pi^\sharp(dH)$. 
\end{defn}
As in the symplectic case, to define the Poincar\'e map we will consider the traversal sections inside the level set of the Hamiltonian. 
We will show that such a transversal section is a cosymplectic submanifolds of the ambient Poisson manifold and naturally inherits a Poisson structure. For more details on cosymplectic submanifolds see  \cite{CALVO2010259}*{Section $5.1$}. 
\begin{defn}[]\label{cosymplectic}
$N\subset (M,\pi)$ is a {\it cosymplectic submanifold} if it is the level set of {\it second class constraints} i.e., 
$N=\cap_{i=1}^{2k} G_i^{-1}(0)$ where  $\{G_1,...,G_{2k}\}$ are functions such that    $[\{G_i,G_j\}(x)]_{i,j}$ is an invertible matrix at all points $x\in N$.
\end{defn}

\begin{remark}
A constraint is called first class if it Poisson commutes with other constraints of the  system. Sometimes, in the literature, a constraint that has non-zero Poisson bracket with at least one other constraint of the system is called a second class constraint. Definition~\ref{cosymplectic} demands a stronger condition, but the cosymplectic submanifolds that we will use have codimension $2$, where having non-zero Poisson bracket with the other constraint  is the same as  $[\{G_i,G_j\}(x)]_{i,j=1,2}$ being an invertible matrix.
\end{remark}

Every, cosympletic submanifold is naturally equipped with a Poisson bracket called Dirac bracket. Paul Dirac, \cite{paul-dirac-book}, developed this bracket to treat classical systems with second class constraints in Hamiltonian mechanics.

\begin{defn}\label{dirac-bracket} For cosymplectic submanifold  $N\subset(M,\pi)$, let $${G_1,..,G_{2k}:U\to\mathbb{R}}$$ be  its second class constraints,  where $U$ is a small enough neighborhood of $N$ in $M$ such that the matrix  $ [\{G_i,G_j\}(x)]_{i,j}$ is invertible at all points $x\in U$. The {\it Dirac bracket} is defined on $\C^\infty(U)$ by 
 \begin{equation}\label{eq:dirac-bracket}
 \{f,g\}_{\rm Dirac}=\{f,g\}-[\{f,G_i\}]^t\,[\{G_i,G_j\}]^{-1}\,[\{G_i,g\}],
 \end{equation}
 where $[\{.,G_i\}]$ is the column matrix with components $\{.,G_i\}$, $i=1,\ldots,2k$. 
\end{defn}

Dirac bracket is actually a Poisson bracket on the open submanifold $U$, see \cite{CALVO2010259}. It takes an easy calculation to see that constraint functions $G_i,\,i=1,\ldots,2k$ are Casimirs of Dirac bracket.  This fact allows the restriction of Dirac bracket  to the cosymplectic submanifold $N$. Note that, in general, restricting (pulling back) a Poisson structure to an arbitrary submanifold is not straightforward. Actually, the decomposition 
\begin{equation}\label{eq:decomposition}
\pi^\sharp(T_xN^\circ)\oplus T_xN=T_x M
\end{equation}
that holds for every point $x\in N$ and a strait forward calculation yield the independence of the extension in the following definition.  In Equation~\eqref{eq:decomposition}, the term $T_xN^\circ$  is the annihilator of $T_xN$ in $T^\ast_xM$. We will use this notation in the rest of the paper. 
Equation~\eqref{eq:decomposition} can be used as definition of a cosymplectic submanifold, see~\cite{CALVO2010259}, but for our propose it suits better to use the second class constraints to define cosymplectic submanifolds. 
\begin{defn}\label{restricted-dirac-bracket}
The restricted Dirac bracket on cosymplectic submanifold $N$, which will be also referred  to as {\it Dirac bracket,} is simply defined by extending {\it in any arbitrary way} functions on $N$ to functions on $U$, calculating their Dirac bracket on $U$ and restricting the result back to $N$.    
\end{defn}

We consider a Hamiltonian $H$ on the $m$-dimensional Poisson manifold $(M,\pist)$ and its associated Hamiltonian vector field defined by $X_H=\{H,.\}=\pi^\sharp(dH)$. For a given point $x_0\in M$ let  $U$ be a neighborhood around it such that $X_H(x)\neq0\quad \forall x\in U$, and  $\mathcal{E}_{x_0}$ be the energy surface passing through $x_0$, i.e., the connected component of $H^{-1}(H(x_0))$ containing $x_0$. We call {\em level transversal section} to $X_H$ at a regular point $x_0\in M$ any $(m-2)$-dimensional transversal section $\Sigma\subset \mathcal{E}_{x_0} \cap U$ through $x_0$.

The following lemma shows that $\Sigma$ is a cosymplectic submanifold.  
\begin{lemma}\label{lemma:cosymplectic}
 Every level transversal section $\Sigma$ is a cosymplectic submanifold of $M$.
\end{lemma}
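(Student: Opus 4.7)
The plan is to exhibit, on a neighborhood of $x_0$ in $M$, two functions $G_1,G_2$ whose common zero set is $\Sigma$ and whose Poisson bracket matrix is invertible along $\Sigma$, thereby matching Definition~\ref{cosymplectic} with $k=1$. The natural first choice is $G_1 := H - H(x_0)$. This vanishes on $\mathcal{E}_{x_0}$, hence on $\Sigma$, and since $X_H = \pist^\sharp(\d H)$ is non-zero on $U$, the differential $\d H$ is non-zero on $U$ as well, so $G_1$ has non-vanishing differential there.

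For the second constraint I will use the hypothesis that $\Sigma$ is a codimension-one transversal section of $X_H$ inside the energy surface $\mathcal{E}_{x_0}$, which has dimension $m-1$. Working in a chart around $x_0$ with first coordinate $z_1:=H-H(x_0)$ (admissible because $\d H(x_0)\neq 0$), the energy surface is locally cut out by $\{z_1=0\}$ and $\Sigma$ is a hypersurface in $\{z_1=0\}$ transversal to $X_H$. I can then pick a smooth function $G_2$ on the chart with $\Sigma\subset G_2^{-1}(0)$, $\d G_2$ linearly independent from $\d z_1$ at $x_0$, and such that $\{z_1=0\}\cap\{G_2=0\}=\Sigma$ locally. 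Concretely, one extends a defining function of $\Sigma$ inside $\mathcal{E}_{x_0}$ to the ambient chart by any transversal direction; this is the one local construction that must be spelled out carefully, and it is the main (mild) technical point.

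With $G_1,G_2$ in hand, the matrix of Poisson brackets has only its off-diagonal entry to compute, namely
$$\{G_1,G_2\} \;=\; \{H,G_2\} \;=\; X_H(G_2) .$$
Because $\Sigma$ is transversal to $X_H$ inside $\mathcal{E}_{x_0}$ and $G_2$ was chosen with $\{G_2=0\}$ cutting $\mathcal{E}_{x_0}$ precisely along $\Sigma$, the derivative $X_H(G_2)$ does not vanish on $\Sigma$. Therefore the matrix $\bigl[\{G_i,G_j\}(x)\bigr]_{i,j=1,2}=\left(\begin{smallmatrix} 0 & X_H(G_2) \\ -X_H(G_2) & 0 \end{smallmatrix}\right)$ is invertible at every point $x\in\Sigma$, which is exactly the second-class constraint condition of Definition~\ref{cosymplectic}. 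This proves that $\Sigma$ is a cosymplectic submanifold of $(M,\pist)$, and as a bonus it shows that $H$ itself can always be taken as one of the two defining constraints, which will later be useful for expressing the Dirac bracket on $\Sigma$ in terms of the ambient bracket on $M$.
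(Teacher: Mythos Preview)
Your proof is correct and follows essentially the same approach as the paper: take $H-H(x_0)$ as the first constraint, pick a second function $G$ whose level set cuts out $\Sigma$ inside the energy surface, and use transversality of $\Sigma$ to $X_H$ to conclude $\{H,G\}=X_H(G)\neq 0$ on $\Sigma$. The paper's argument is more terse (it simply asserts the existence of such a $G$), while you spell out the chart construction in more detail, but the idea is identical.
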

\begin{proof}
Since $d_{x_0} H\neq0$, there exist a function $G$ locally defined in $U$ (shrink $U$ if necessary) and linearly independent from $H$ such that 
$$\Sigma=\mathcal{E}_{x_0} \cap U\cap G^{-1}(G(x_0)).$$
 Then, we have 
\[
 \pist(\d H,\d G)= X_H(G)= dG(X_H) \neq 0  
\]
by transversality. This finishes the proof. 
\end{proof}

\begin{remark}\label{set-tilde-notation}
The second term in the right hand side of Equation~\eqref{eq:dirac-bracket} is 
\begin{equation*}%\label{second-term-of-dirac-bracket}
\begin{bmatrix}\{f,H\}\\\{f,G\}\end{bmatrix}^t\begin{bmatrix}0&\{H,G\}\\\{G,H\}&0\end{bmatrix}^{-1}\begin{bmatrix}\{H,g\}\\\{G,g\}\end{bmatrix}.\
\end{equation*}
Then, using extensions $\tilde{f}$ and $\tilde{g}$ of $f,g\in C^\infty(\Sigma)$ such that at every point $x\in\Sigma$ \emph{their differentials vanish on $X_H$}, yields
\[\{f,g\}_{\rm Dirac}= \{\tilde{f},\tilde{g}\}|_{\Sigma}.\]
We will use this fact to simplify our proofs but arbitrary extensions are more suitable for calculating the Dirac structure. 
\end{remark}

We will use the same notation $f$ for arbitrary extension and reserve the notation $\tilde {f}$ for extension that their differentials vanishes on $X_H$ at every point $x\in\Sigma$. To avoid any possible confusion, we observe that in~\cite{MR2128714}*{Section 8} and~\cite{MR2128714}*{Section 8} the notation $\tilde{f}$ is used in a slightly different sense.

\begin{remark}\label{remark:Poisson-dirac}
Cosymplectic submanifolds are special examples of the so called Poisson-Dirac submanifolds, see~\cite{MR2128714}*{Section 8}. The induced Poisson structure on a Poisson-Dirac submanifold is defined by using extensions such that their differentials vanish on $\pi^\sharp(T\Sigma^\circ)$. In~\cite{MR2128714}*{Section 8} and~\cite{CALVO2010259}*{Lemma $5.1$} the notation $\tilde{f}$ is used for this type of extensions. For a cosymplectic submanifold $\Sigma$ given by second class constraints $G_1,...,G_{2k}$, we have
\begin{equation}\label{eq:annihilator}
\pi^\sharp(T\Sigma^\circ)=\oplus_{i=1}^{2k}\mathbb{R} X_{G_i},
\end{equation}
and  the Dirac bracket coincides with the bracket induced in this way, see  \cite{CALVO2010259}*{Section $5.1$}. In our case, we only have two constraints $H,G$ and requiring the vanishing of the differential only on  $X_H$ (or $X_G$) at every point $x\in\Sigma$ is enough to obtain the same induced Poisson bracket. 
\end{remark}

For a fixed time $t_0$, let $x_1=\phi_H(t_0,x_0)$, where $\phi_H$ is the flow of the Hamiltonian vector field $X_H$, and $\Sigma_0, \Sigma_1$ be level transversal sections at $x_0$ and $x_1$, respectively. 
As usual, a Poincar\'e map $\pimap=\phi_H(\tau(x),x)$ can be defined from an appropriate neighborhood of $x_0$ in $\Sigma_0$ to a neighborhood of $x_1$ in $\Sigma_1$. The existence of the smooth function $\tau(x)$ is guaranteed by the Implicit Function Theorem. We replace $\Sigma_0$ and $\Sigma_1$ by the domain and the image of the Poincar\'e map $P$.

By Lemma~\ref{lemma:cosymplectic} both $\Sigma_i$, $i=0,1$, are  cosymplectic submanifolds equipped with Dirac brackets 
$\{.,.\}_{{\rm Dirac}_i},\, i=0,1$. We will show that the Poincar\'e map $\pimap$ is a Poisson map (see Definition~\ref{def:Poissonmap}). 

\begin{proposition}\label{pois-poin-map}
 The Poincar\'e map 
 $$\pimap:(\Sigma_0,\{.,.\}_{{\rm Dirac}_0})\rightarrow(\Sigma_1,\{.,.\}_{{\rm Dirac}_1})$$
  is a Poisson map.  
\end{proposition}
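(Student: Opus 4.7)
The plan is to reduce the Poisson-map identity $\{f\circ \pimap,g\circ \pimap\}_{\mathrm{Dirac}_0} = \{f,g\}_{\mathrm{Dirac}_1}\circ \pimap$ (for $f,g\in C^{\infty}(\Sigma_1)$) to the well-known fact that the Hamiltonian flow $\phi_H^{t}$ preserves the ambient Poisson bracket, by choosing extensions of $f,g$ that are specially adapted to the flow. The key bookkeeping device will be Remark~\ref{set-tilde-notation}, which allows us to compute the Dirac bracket via extensions whose differentials vanish on $X_H$ along the transversal section.

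First I would construct flow-invariant extensions. Since $X_H$ is transverse to $\Sigma_1$, the straightening (flow-box) theorem gives tubular coordinates $(s,y)$ on a neighborhood $U_1$ of $\Sigma_1$ in which $X_H=\partial_s$ and $\Sigma_1=\{s=0\}$. Define $\tilde f(s,y):=f(y)$ and $\tilde g(s,y):=g(y)$, so that $X_H\tilde f=X_H\tilde g=0$ throughout $U_1$. In particular their differentials vanish on $X_H$ at every point of $\Sigma_1$, so by Remark~\ref{set-tilde-notation}
\[
\{f,g\}_{\mathrm{Dirac}_1}=\{\tilde f,\tilde g\}_M\big|_{\Sigma_1}.
\]
Now set $F:=\tilde f\circ \phi_H^{t_0}$ and $G:=\tilde g\circ \phi_H^{t_0}$ on $\phi_H^{-t_0}(U_1)$, which contains a neighborhood of $x_0$ and hence of (a neighborhood of $x_0$ in) $\Sigma_0$. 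For $x\in\Sigma_0$, flow-invariance of $\tilde f$ together with $\phi_H^{\tau(x)}(x)=\pimap(x)\in\Sigma_1$ give
\[
F(x)=\tilde f\big(\phi_H^{t_0}(x)\big)=\tilde f\big(\phi_H^{t_0-\tau(x)}(\pimap(x))\big)=\tilde f(\pimap(x))=f(\pimap(x)),
\]
and similarly $G|_{\Sigma_0}=g\circ \pimap$. Moreover $F,G$ are flow-invariant on a neighborhood of $\Sigma_0$, so their differentials vanish on $X_H$ along $\Sigma_0$; applying Remark~\ref{set-tilde-notation} again,
\[
\{f\circ \pimap,g\circ \pimap\}_{\mathrm{Dirac}_0}=\{F,G\}_M\big|_{\Sigma_0}.
\]

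To conclude I would invoke the invariance of the Poisson bracket under the Hamiltonian flow: $\{u\circ\phi_H^{t},v\circ\phi_H^{t}\}=\{u,v\}\circ\phi_H^{t}$. This gives $\{F,G\}=\{\tilde f,\tilde g\}\circ\phi_H^{t_0}$. Since $\{H,\tilde f\}=\{H,\tilde g\}=0$ in $U_1$, the Jacobi identity yields $X_H\{\tilde f,\tilde g\}=0$ on $U_1$, so $\{\tilde f,\tilde g\}$ is itself flow-invariant there. Hence for $x\in\Sigma_0$,
\[
\{F,G\}(x)=\{\tilde f,\tilde g\}\big(\phi_H^{t_0}(x)\big)=\{\tilde f,\tilde g\}\big(\phi_H^{\tau(x)}(x)\big)=\{\tilde f,\tilde g\}(\pimap(x))=\{f,g\}_{\mathrm{Dirac}_1}(\pimap(x)),
\]
and combining with the previous display finishes the proof.

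The main obstacle I expect is conceptual rather than computational: one has to notice that the characterization in Remark~\ref{set-tilde-notation} only constrains the extensions along $\Sigma$, but is flexible enough to allow \emph{honestly} flow-invariant representatives in a whole neighborhood. Without this strengthening, the candidate extension $\tilde f\circ\phi_H^{t_0}$ would only agree with $f\circ\pimap$ on $\Sigma_0$ up to first order (the discrepancy being controlled by the return-time function $\tau(x)-t_0$), and one would be forced into a less transparent argument correcting the extension or working with the pull-back of $\pist$ more directly.
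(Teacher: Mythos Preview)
Your argument is correct and takes a genuinely different route from the paper's proof. The paper extends the return-time function $\tau$ to a neighborhood $U_0$ of $x_0$ so that $d\tilde\tau(X_H)=0$, defines the extended map $\tilde P(x)=\phi_H(\tilde\tau(x),x)$, and then computes $d_x\tilde P$ explicitly as $d_x\phi_H^{\tilde\tau(x)}+(d_x\tilde\tau)\,X_H$; the second summand is shown to drop out of the bracket computation because the extensions $\tilde f,\tilde g$ have differentials vanishing on $X_H$ along $\Sigma_1$, and the first summand is handled by the Poisson property of the fixed-time flow. You instead invest the freedom in Remark~\ref{set-tilde-notation} into the \emph{functions}: by straightening $X_H$ near $\Sigma_1$ you obtain extensions that are flow-invariant on a full neighborhood, which lets you replace the variable-time Poincar\'e map by the fixed-time map $\phi_H^{t_0}$ (the mismatch $\tau(x)-t_0$ being absorbed by flow-invariance of $\tilde f$, $\tilde g$, and, via Jacobi, of $\{\tilde f,\tilde g\}$). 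Your approach is conceptually cleaner and avoids the explicit tangent-map decomposition; the paper's approach is more hands-on and does not invoke the flow-box theorem, working only with the first-order condition $d\tilde f(X_H)=0$ on $\Sigma_1$ rather than full flow-invariance.
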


\begin{proof}
We define $\tilde{P}:U_0\to U_1$ by
$$
\tilde{\pimap}(x):=\phi_H(\tilde{\tau}(x),x),
$$
 where $\tilde{\tau}$ is an extension of $\tau$ to a neighborhood $U_0$ of $x_0$ such that its differential, $d\tilde{\tau}$, vanishes on $X_H$. Both neighborhood $U_0$ and $U_1$ can be shrunk, if necessary,  in a way that both Dirac brackets around $\Sigma_0$ and $\Sigma_1$ are defined in $U_0$ and $U_1$, respectively. A straightforward calculation shows that  for every point $x$ in the domain of $\tilde{\tau}$, we have
 \[
  d_{x}\tilde{\pimap}=d_{x}\phi_H^{\tilde{\tau}(x)}+(d_{x}\tilde{\tau})X_H(\phi_H^{\tilde{\tau}(x)}(x)),
 \]
 where $\phi_H^{\bar{\tau}(x)}(.)=\phi_H(\tilde{\tau}(x),.)$. Furthermore, for every $x\in \Sigma_0$, we have 
\begin{align}\label{eq:preserve-xh}
d_{x}\tilde{P}(X_H(x))&=d_{x}\phi_H^{\tilde{\tau}(x)}(X_H(x))+(\underbrace{d_{x}\tilde{\tau}(X_H(x))}_{=0})X_H(\phi_H^{\tilde{\tau}(x)}(x))\notag\\
&=d_{x}\phi_H^{\tilde{\tau}(x)}(X_H(x))=X_H(\phi_H^{\tilde{\tau}(x)}(x))=X_H(\tilde{P}(x)).
\end{align}
Note that $d_{x}\phi_H^{\tilde{\tau}(x)}$ in Equation~\eqref{eq:preserve-xh} is the derivative of time-$\tilde{\tau}(x)$ flow of $X_H$ and the fixed time flow maps of $X_H$ are  Poisson maps, i.e. it sends Hamiltonian vector fields to Hamiltonian vector field. Furthermore,  the flow of $X_H$ preserves $H$, this means that 
$$H(\tilde{P}(x))=H(\phi_H^{\tilde{\tau}(x)}(x))=H(x).$$
As we set in Remark~\ref{set-tilde-notation}, let $\tilde{f}$ be an extension of a given $f\in C^\infty(\Sigma_1)$ such that $$d_x\tilde{f}(X_H)=0,\quad\forall x\in\Sigma_1,$$
then for every $x\in\Sigma_0$, 
\begin{align*}
d_x(\tilde{f}\circ\tilde{P})(X_{H})&=d_{\tilde{P}(x)}\tilde{f}\circ d_x\tilde{P}(X_H)=d_{\tilde{P}(x)}\tilde{f}(X_H)=0.
\end{align*}
Now, for  $f,g\in C^\infty(\Sigma_1)$ and $x\in\Sigma_0$ we have 
 \begin{align*}
 &\{\tilde{f}\circ\tilde{\pimap},\tilde{g}\circ\tilde{\pimap}\}(x)=\pist_x\left((d_x\tilde{\pimap})^\ast\d_x\tilde{f},(d_x\tilde{\pimap})^\ast\d_x\tilde{g},\right)\\
 &= \pist_x\left((d_{x}\phi_H^{\bar{\tau}(x)})^\ast\d_x\tilde{f}+(d_{x}\bar{\tau}X_H)^\ast\d_x\tilde{f},(d_{x}\phi_H^{\bar{\tau}(x)})^\ast\d_x\tilde{g}+(d_{x}\bar{\tau}X_H)^\ast\d_x\tilde{g}\right)\\
 &= \pist_x\left((d_{x}\phi_H^{\bar{\tau}(x)})^\ast\d_x\tilde{f},(d_{x}\phi_H^{\bar{\tau}(x)})^\ast\d_x\tilde{g}\right)\\
 &= \pist(\d\tilde{f},\d\tilde{g})(\tilde{\pimap}(x))=\{\tilde{f},\tilde{g}\}(\tilde{\pimap}(x)),
 \end{align*}
 and consequently, 
 \begin{align*}
  \{f\circ \pimap,g\circ \pimap\}_{{\rm Dirac}_0} &=\{\tilde{f}\circ\tilde{\pimap},\tilde{g}\circ\tilde{\pimap}\}|_{\Sigma_0}\\
  &=\{\tilde{f},\tilde{g}\}\circ\tilde{\pimap}|_{\Sigma_0}\\
  &=\{f,g\}_{{\rm Dirac}_1}\circ \pimap,
 \end{align*}
 where we used Remark~\ref{set-tilde-notation}. This finishes the proof.
\end{proof}

%%%%%%%%%%%%%%%%%%%%%%%%%%%
\section{Hamiltonian polymatrix replicators}
\label{sec:polymatrix-replicators}

In this  section we  provide a short introduction to polymatrix replicators, following~\cite{AD2015}. In particular we will focus on the class of conservative polymatrix replicators that we designate as \textit{Hamiltonian polymatrix replicators}.

Consider a population divided in $p$ groups where each group is labeled by an integer $\alpha\in \{1, \dots ,p \}$, and the 
individuals of each group $\alpha$ have exactly $n_\alpha$ strategies to interact with other members of the population (including of the same group).
In total we have  $n=\sum_{\alpha=1}^p n_\alpha$  strategies that we label
by the integers $i\in \{1,\ldots, n\}$, denoting by
$$[\alpha]:=\{n_1+\cdots+n_{\alpha-1}+1,  \,  \ldots, \,  n_1+\cdots+ n_\alpha\}\subset \Nn  $$
the set (interval) of strategies of group $\alpha$.

Given $\alpha,\beta\in\{1,\ldots,p\}$, consider a real $n_\alpha\times n_\beta$ matrix, say $A^{\alpha,\beta}$, whose entries $a_{ij}^{\alpha,\beta}$, with $i\in [\alpha]$ and $j\in [\beta]$, represent the payoff of an individual of the group $\alpha$ using the $i^{\textrm{th}}$ strategy when interacting with an individual of the group $\beta$ using the $j^{\textrm{th}}$ strategy.
Thus the matrix  $A$ with entries $a_{ij}^{\alpha,\beta}$, where $\alpha,\beta\in\{1,\ldots,p\}$, $i\in [\alpha]$ and $j\in [\beta]$,  is a square matrix of order $n=n_1+\ldots + n_p$, consisting of the block matrices $A^{\alpha,\beta}$.

Let $\nund=(n_1,\ldots, n_p)$.
The \emph{state} of the population is described by a point $x=(x^ \alpha)_{1\le \alpha\le p}$ in the \emph{polytope}
$$ \Gamma_{\nund} :=\Delta^{n_1-1}\times \ldots \times \Delta^{n_p-1}\subset \R^n \;, $$
where $\Delta^{n_\alpha-1}=\{x\in\R_+^{[\alpha]}: \sum_{i\in [\alpha]} x_i^\alpha=1\}$,
$x^\alpha=(x_i^\alpha)_{i\in [\alpha]}$ 
and the entry $x_i^\alpha$ represents the  usage frequency of the $i^{\textrm{th}}$ strategy within the group $\alpha$.
We denote by $\partial\Gamma_{\nund}$ the boundary of $\Gamma_{\nund}$.

Assuming random encounters between individuals, for each group $\alpha\in\{1,\dots ,p\}$,
the average payoff of a strategy $i\in [\alpha]$ within a population with state $x$  is given by
$$ (Ax)_{i}=\sum_{\beta=1}^p \left( A^{\alpha,\beta} \right)_i x^\beta 
= \sum_{\beta=1}^p \sum_{k\in [\beta]}  a_{ik}^{\alpha,\beta}x_k^\beta \,, $$
where  the overall average payoff of group $\alpha$ is given by
$$ \sum_{i\in [\alpha]} x_i^{\alpha} \left( Ax \right)_{i} \,.$$

Demanding that the logarithmic growth rate  of the frequency of each strategy $i\in [\alpha]$, $\alpha\in \{1,\dots ,p\}$, is equal
to the payoff difference between strategy $i$ and the overall average payoff of group $\alpha$ yields the
system of ordinary differential equations defined on the polytope $\Gamma_{\nund}$,
\begin{footnotesize}
\begin{equation}\label{eq:ode:pmg}
\frac{ d x_i^\alpha}{dt}  = x_i^\alpha  \left( (Ax)_{i} - \sum_{i\in [\alpha]}x_i^{\alpha}\left( Ax \right)_{i}\right), \,\,\alpha\in\{1,\dots ,p\}, 
i\in [\alpha],
\end{equation}
\end{footnotesize}
that will be designated as a \emph{polymatrix replicator}.

If $p=1$ equation~\eqref{eq:ode:pmg} becomes the usual replicator equation with payoff matrix $A$.
When $p=2$ and  $A^{11}=A^{22}=0$ are null matrices, equation~\eqref{eq:ode:pmg}  becomes the  bimatrix replicator equation with payoff matrices $A^{12}$ and $(A^{21})^t$.

The flow $\phi_{\nund,A}^t$ of this equation leaves the polytope $\Gamma_{\nund}$ invariant.
The proof of this fact is analogous to that for  the bimatrix replicator equation, see~\cite[Section 10.3]{HS1998}.
Hence, by compactness of $\Gamma_{\nund}$, the flow $\phi_{\nund,A}^t$ is complete.
From now on the term \emph{polymatrix replicator} will also refer to the flow $\phi_{\nund,A}^t$ and the
underlying vector field on $\Gamma_{\nund}$, denoted by $X_{\nund, A}$.

Given $\nund=(n_1,\ldots, n_p)$, let
$$\mathscr{I}_{\nund}:=\{\,I\subset \{1,\ldots, n\}\,:\,
\# \left(I\cap [\alpha]\right) \geq 1,\; \forall\,\alpha=1,\ldots, p\,\}\;.$$
A set $I\in \mathscr{I}_{\nund}$ determines the facet $\sigma_I:=\{\,x\in\Gamma_{\nund}\,:\,
x_j=0,\, \forall\, j\notin I\,\}$ of $\Gamma_{\nund}$.
The correspondence between labels in $\mathscr{I}_{\nund}$
and facets of $\Gamma_{\nund}$ is bijective. 

\newcommand{\isigma}{\sigma^\circ}

\begin{remark}\label{stratification-of-prism}
The partition of $\Gamma_{\nund}$ into the interiors  $\isigma_I:=\inter(\sigma_I)$, with $I\in\mathscr{I}_{\nund}$, is a smooth stratification of $\Gamma_{\nund}$ with strata $\isigma_I$. Every stratum $\isigma_I$ is a  connected open submanifold and for any pair $\isigma_{I_1},\,\isigma_{I_2}$ if $\isigma_{I_1}\cap \sigma_{I_2}\neq\emptyset$ then $\sigma_{I_1}\subset \sigma_{I_2}$. For more on smooth stratification see~\cite{FO} and references therein. 
\end{remark}

For a set $I\in\mathscr{I}_{\nund}$  consider the pair $(\nund^I,A_I)$,
where
${\nund^I=(n_1^I,\ldots, n_p^I)}$ with $n_\alpha^I=\#(I\cap [\alpha] )$,  and  $A_I=\matbrackets{a_{ij}}{i,j\in I}$.  
\begin{proposition}~\cite[Proposition $3$]{AD2015}\label{invariance}
Given $I\in\mathscr{I}_{\nund}$, the facet $\sigma_I$ of $\Gamma_{\nund}$  is invariant under the flow of $X_{\nund,A}$ and the restriction of~\eqref{eq:ode:pmg} to $\sigma_I$ is the polymatrix replicator $X_{\nund^I,A_I}$.
\end{proposition}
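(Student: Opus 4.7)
The statement has two parts: invariance of $\sigma_I$ under the flow, and identification of the restricted system with $X_{\nund^I, A_I}$. My plan is to prove them in this order, since invariance is needed in order to meaningfully speak of a restriction.

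For invariance, the key observation is that each equation in \eqref{eq:ode:pmg} has the multiplicative form $\dot x_i^\alpha = x_i^\alpha\,F_i^\alpha(x)$, where $F_i^\alpha$ is a smooth function on $\Gamma_{\nund}$. By Duhamel's formula (or simply integrating along a solution curve),
\[
x_i^\alpha(t) = x_i^\alpha(0)\,\exp\!\left(\int_0^t F_i^\alpha(x(s))\,ds\right),
\]
so each coordinate hyperplane $\{x_i^\alpha=0\}$ is forward and backward invariant. Since $\sigma_I = \{x_j=0 : j\notin I\}\cap \Gamma_{\nund}$ is, up to the simplex constraints, the intersection of such hyperplanes, it is invariant as well. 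The simplex constraints $\sum_{i\in[\alpha]} x_i^\alpha = 1$ are preserved by any polymatrix replicator, as is standard: differentiating the sum and factoring out $\sum_{i\in[\alpha]}x_i^\alpha(Ax)_i$ yields zero directly from \eqref{eq:ode:pmg}.

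For the restriction, set $J_\alpha := I\cap[\alpha]$, which is nonempty for every $\alpha$ precisely because $I\in\mathscr{I}_{\nund}$, so that $\nund^I=(\#J_1,\ldots,\#J_p)$ is a legitimate tuple. On $\sigma_I$ we have $x_k^\beta=0$ whenever $k\in[\beta]\setminus I$, and hence every term involving such a $k$ drops out of the payoff:
\[
(Ax)_i \;=\; \sum_{\beta=1}^p\sum_{k\in[\beta]} a_{ik}^{\alpha,\beta}\,x_k^\beta \;=\; \sum_{\beta=1}^p\sum_{k\in J_\beta} a_{ik}^{\alpha,\beta}\,x_k^\beta \;=\; (A_I\,x|_I)_i
\]
for $i\in J_\alpha$. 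The same truncation makes the average payoff $\sum_{i\in[\alpha]} x_i^\alpha (Ax)_i$ collapse to $\sum_{i\in J_\alpha} x_i^\alpha (A_I\,x|_I)_i$. Substituting into \eqref{eq:ode:pmg} for $i\in J_\alpha$ yields exactly the polymatrix replicator ODE for the pair $(\nund^I, A_I)$, under the natural identification $\sigma_I \cong \Gamma_{\nund^I}$ obtained by dropping the coordinates $x_j$ with $j\notin I$.

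There is no serious obstacle here: the argument is a direct calculation. The only point requiring care is the indexing/book-keeping, namely verifying that restricting $A$ to the rows and columns indexed by $I$ reproduces the block structure compatible with the reduced group sizes $\nund^I$, and checking that the simplex conditions $\sum_{i\in J_\alpha}x_i^\alpha=1$ on $\sigma_I$ coincide with the defining constraints of $\Gamma_{\nund^I}$. Both are immediate from the definitions, so the proof is essentially a substitution once the multiplicative form of the equations has been used for invariance.
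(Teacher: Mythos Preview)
The paper does not give its own proof of this proposition; it is quoted verbatim from~\cite[Proposition~3]{AD2015} and left without argument. Your proof is correct and is precisely the standard direct argument one would expect: the multiplicative form $\dot x_i^\alpha = x_i^\alpha F_i^\alpha(x)$ forces each coordinate hyperplane to be invariant (by the scalar linear ODE solution formula, or equivalently by tangency plus uniqueness), and the restriction computation is a straightforward substitution once the vanishing coordinates are dropped. One cosmetic remark: the name ``Duhamel's formula'' is not quite apt here---that usually refers to the variation-of-constants formula for inhomogeneous equations---but your parenthetical ``integrating along a solution curve'' is exactly right.
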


For a fixed $\nund=(n_1,\ldots, n_p)$ the correspondence $A\mapsto X_{\nund,A}$ is linear and its kernel consists  of the matrices $C=\left( C^{\alpha,\beta}\right)_{1\le\alpha,\beta\le p}$ where each block $C^{\alpha,\beta}$ has equal rows, i.e., has the form
\[C^{\alpha,\beta}=\begin{pmatrix}
c^{\alpha,\beta}_1&c^{\alpha,\beta}_2&\ldots&c^{\alpha,\beta}_n\\c^{\alpha,\beta}_1&c^{\alpha,\beta}_2&\ldots&c^{\alpha,\beta}_n\\\vdots&\vdots&&\vdots\\c^{\alpha,\beta}_1&c^{\alpha,\beta}_2&\ldots&c^{\alpha,\beta}_n
\end{pmatrix}.
\]
Thus  ${X_{\nund,A_1}=X_{\nund,A_2}}$  if and only if  for every $\alpha,\beta\in\{1,\dots,p\}$ the matrix
$A^{\alpha,\beta}_1-A^{\alpha,\beta}_2$ has equal rows (see~\cite[Proposition 1]{AD2015}).

We have now the following characterization of the interior equilibria.

\begin{proposition}~\cite[Proposition $2$]{AD2015}
\label{prop int equilibria}
Given a polymatrix replicator $X_{\nund,A}$, a point
$q\in {\rm int}(\Gamma_\nund)$ is an equilibrium of $X_{\nund,A}$ \, iff \,
$(A\,q)_i=(A\,q)_j$ for all $i,j\in[\alpha]$ and 
$\alpha=1,\ldots, p$.

In particular the set of interior equilibria of $X_{\nund,A}$ is the intersection of some affine subspace with ${\rm int}(\Gamma_\nund)$.
\end{proposition}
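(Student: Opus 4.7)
The proof is essentially a direct unpacking of the defining ODE, so I expect no serious obstacle; the work consists of a clean bookkeeping of the equilibrium condition.

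First, I would rewrite equation~\eqref{eq:ode:pmg} in the compact form
\[
\frac{dx_i^\alpha}{dt}=x_i^\alpha\,\bigl((Ax)_i-\bar a^\alpha(x)\bigr),\qquad \bar a^\alpha(x):=\sum_{j\in[\alpha]}x_j^\alpha\,(Ax)_j,
\]
so that a point $q\in\Gamma_\nund$ is an equilibrium if and only if
$q_i^\alpha\bigl((Aq)_i-\bar a^\alpha(q)\bigr)=0$ for every $\alpha$ and every $i\in[\alpha]$. At an interior point each $q_i^\alpha$ is strictly positive, so this system reduces to
\[
(Aq)_i=\bar a^\alpha(q)\qquad\text{for all }\alpha\in\{1,\ldots,p\},\ i\in[\alpha].
\]

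Next I would show this is equivalent to the stated intra-group equality $(Aq)_i=(Aq)_j$ for all $i,j\in[\alpha]$. The forward implication is immediate, since the common value is $\bar a^\alpha(q)$. For the converse, suppose $(Aq)_i=c^\alpha$ for every $i\in[\alpha]$; then $\bar a^\alpha(q)=\sum_{j\in[\alpha]}q_j^\alpha\,c^\alpha=c^\alpha$ because $\sum_{j\in[\alpha]}q_j^\alpha=1$, whence $(Aq)_i=\bar a^\alpha(q)$ for every $i\in[\alpha]$. This handles both implications of the iff statement.

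For the "in particular" clause, I would simply observe that the conditions $(Aq)_i-(Aq)_j=0$ with $i,j\in[\alpha]$, $\alpha=1,\ldots,p$, are a finite collection of \emph{linear} equations in $q\in\Rr^n$, hence cut out a linear subspace $L\subset\Rr^n$. The polytope $\Gamma_\nund$ lies in the affine subspace $A_\nund:=\{x\in\Rr^n:\sum_{i\in[\alpha]}x_i^\alpha=1,\ \alpha=1,\ldots,p\}$, so $L\cap A_\nund$ is affine, and the set of interior equilibria is precisely $(L\cap A_\nund)\cap\mathrm{int}(\Gamma_\nund)$, which is the intersection of the affine subspace $L\cap A_\nund$ with $\mathrm{int}(\Gamma_\nund)$, as claimed. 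I would keep the argument short, since no non-trivial geometry or analysis enters beyond linearity of $x\mapsto Ax$ and of the simplex constraints.
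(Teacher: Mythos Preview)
Your proof is correct and is exactly the natural direct argument. Note that the paper itself does not supply a proof of this proposition: it is quoted from~\cite{AD2015} (Proposition~2) without argument, so there is nothing further to compare against; your write-up is precisely the expected verification.
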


\begin{defn}\label{CPR}
A polymatrix replicator  $X_{\nund,A}$ is said to be {\em conservative}  if there exists:
\begin{enumerate}
\item[(a)] a point $q\in\Rr^n$, called {\em formal equilibrium}, such that 
$(A\,q)_i=(A \, q)_j$ for all $i,j\in[\alpha]$, and all $\alpha=1,\ldots, p$ and $\sum_{j\in[\alpha]} q_j=1$;
\item[(b)] matrices $A_0, D\in\Mat_{n\times n}(\Rr)$ such that 
\begin{enumerate}
\item[(i)] $X_{\nund,A_0D}=X_{\nund,A}$,
\item[(ii)] $A_0$ is a skew symmetric, and
\item[(iii)] $D=\diag(\lambda_1 I_{n_1},\dots, \lambda_p I_{n_p})$  with $\lambda_\alpha\neq 0$
for all $\alpha\in\{1,\dots, p\}$.
\end{enumerate}
\end{enumerate}
The matrix $A_0$ will be referred to as a {\em skew symmetric model}  for $X_{\nund,A}$, and $(\lambda_1,\ldots,\lambda_p)\in(\Rr^ \ast)^p$   as a {\em scaling co-vector}.
\end{defn}

In~\cite{ADP2015}, another characterization  of conservative polymatrix replicators, using quadratic forms, is provided.
Furthermore, in~\cite{hassan-JGM-2020} the concept of conservative replicator equations (where $p=1$) is generalized  using Dirac structures.

In what follows, the vectors in $\Rr^n$, or $\Rr^{[\alpha]}$, are identified with column vectors.
Let $\unit_{n} =(1,..,1)^t\in\Rr^{n}$. We will omit the subscript $n$
whenever the dimension of this vector is clear from the context.
Similarly, we write $I=I_n$ for the $n\times n$ identity matrix. 
Given $x\in \Rr^n$, we denote by $D_x$ the $n\times n$ diagonal matrix
$D_x := \diag(x_1,\dots ,x_n)$.
For each $\alpha\in\{1,\ldots, p\}$ we define the $n_\alpha\times n_\alpha$ matrix
$$ T^\alpha_x:= x^\alpha\, \unit^t -I\;,$$
and $T_x$ the $n\times n$ block diagonal matrix
$T_x := \diag(T^1_x,\dots ,T^p_x)$.

\bigskip
 
Given an anti-symmetric matrix $A_0$, we 
define the  skew symmetric matrix valued  mapping $\pi_{A_0}:\Rr^n\to \Mat_{n\times n}(\Rr)$  
\begin{equation}\label{eq:Poissonpmg}
\pi_{A_0}(x):=(-1)\, T_x\, D_x \, A_0\, D_x\,T_x^t.
\end{equation}

The interior of the polytope $\Gamma_\nund$, denoted by ${\rm int}(\Gamma_\nund)$, equipped with $\pi_{A_0}$ is a Poisson manifold, see~\cite{AD2015}*{Theorem 3.5}. Furthermore, we have the following theorem.

\begin{theorem}~\cite{AD2015}*{Theorem $3.7$}\label{conservative:hamiltonian}
Consider a conservative polymatrix replicator $X_{\nund,A}$ with
formal equilibrium $q$, skew symmetric model $A_0$ and scaling co-vector $(\lambda_1,\ldots, \lambda_p)$.
Then $X_{\nund,A}$, restricted to ${\rm int}(\Gamma_\nund)$, is Hamiltonian with Hamiltonian function
\begin{equation}\label{eq:constant:of:motion}
h(x) =\sum_{\beta=1}^p\lambda_\beta\sum_{j\in [\beta]}q^{\beta}_j\log x_j^{\beta}\;.
\end{equation}
\end{theorem}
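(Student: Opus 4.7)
The plan is to verify directly that the Hamiltonian vector field $\pi_{A_0}^\sharp(dh)$ coincides with $X_{\nund,A}$ on $\inter(\Gamma_\nund)$, via a sequence of matrix manipulations. First I would rewrite the polymatrix replicator in compact matrix form. A short computation shows that $-T_x^t$ acts on a vector $y\in\Rr^n$ by $(-T_x^t y)_i^\alpha = y_i^\alpha - \sum_{k\in[\alpha]} x_k^\alpha\,y_k^\alpha$, which immediately yields
\begin{equation*}
X_{\nund,A}(x) = -D_x\,T_x^t\,A\,x.
\end{equation*}
I would also observe the basic identity $T_x D_x = D_x T_x^t$, which follows by writing each block as $x^\alpha(x^\alpha)^t - D_{x^\alpha}$; this symmetry will be what lets us move the factor across $\pist_{A_0}$.

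Next I would compute the relevant differentials. The $1$-form $dh$ has components $(dh)_i^\alpha = \lambda_\alpha\,q_i^\alpha/x_i^\alpha$. Applying $T_x^t$ and using the normalisation $\sum_{j\in[\alpha]} q_j^\alpha = 1$ built into condition (a) of Definition~\ref{CPR}, I get, in each block,
\begin{equation*}
\bigl(T_x^t\,dh\bigr)_i^\alpha = \lambda_\alpha\Bigl(1 - \frac{q_i^\alpha}{x_i^\alpha}\Bigr),
\end{equation*}
so that $D_x T_x^t\,dh = D(x-q)$ with $D=\diag(\lambda_1 I_{n_1},\ldots,\lambda_p I_{n_p})$ as in (b)(iii). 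Combining this with $T_x D_x = D_x T_x^t$ and the definition~\eqref{eq:Poissonpmg} gives
\begin{equation*}
\pist_{A_0}^\sharp(dh)(x) = -T_x D_x\,A_0\,D(x-q) = -D_x T_x^t\,(A_0 D)\,x + D_x T_x^t\,A_0\,D\,q.
\end{equation*}

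The first summand equals $X_{\nund,A_0 D}(x)$, which by condition (b)(i) coincides with $X_{\nund,A}(x)$. So everything reduces to showing that the second summand vanishes, and this is the only slightly delicate step. The point is that the formal equilibrium condition $(Aq)_i = (Aq)_j$ for $i,j\in[\alpha]$ transfers to the skew-symmetric model $A_0 D$: since $X_{\nund,A} = X_{\nund,A_0 D}$, the matrix $A - A_0 D$ has equal rows within each block $(\alpha,\beta)$, so $((A-A_0D)q)_i^\alpha$ depends only on $\alpha$, hence $(A_0 D q)_i^\alpha$ is also constant in $i\in[\alpha]$, say equal to $c_\alpha$. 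Plugging this into $-T_x^t$ gives $c_\alpha - c_\alpha\sum_{k\in[\alpha]} x_k^\alpha = 0$ because $x\in\Gamma_\nund$. Therefore $D_x T_x^t A_0 D q = 0$ and we conclude $\pist_{A_0}^\sharp(dh) = X_{\nund,A}$ throughout $\inter(\Gamma_\nund)$, which is the claim. The Poisson property of $\pist_{A_0}$ itself is not reproved here: it is cited from~\cite{AD2015}*{Theorem 3.5}. The main technical obstacle is the final vanishing, which relies crucially on transferring the formal-equilibrium property from $A$ to the representative $A_0 D$.
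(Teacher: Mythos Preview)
Your proof is correct. The matrix identities you use are all valid: $X_{\nund,A}(x)=-D_xT_x^tAx$ follows from the definition of the replicator, $T_xD_x=D_xT_x^t$ is the symmetry $x^\alpha(x^\alpha)^t-D_{x^\alpha}$, the computation $D_xT_x^t\,dh=D(x-q)$ uses exactly the normalisation $\sum_{j\in[\alpha]}q_j^\alpha=1$ from Definition~\ref{CPR}(a), and the final vanishing of $D_xT_x^tA_0Dq$ is handled correctly by transferring the equal-row-block property from $A-A_0D$ to conclude that $(A_0Dq)_i$ is constant on each group $[\alpha]$, which $T_x^t$ then annihilates on $\Gamma_\nund$.

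There is nothing to compare against here: the paper does not give its own proof of this statement but simply imports it as \cite{AD2015}*{Theorem~3.7}. Your direct verification is in fact essentially the argument one finds in that reference, so nothing is lost by including it. One minor stylistic point: when you write ``plugging this into $-T_x^t$'' near the end you really mean $T_x^t$ (the sign is already absorbed in the earlier computation), but the conclusion $c_\alpha-c_\alpha\sum_{k\in[\alpha]}x_k^\alpha=0$ is unaffected.
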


%%%%%%%%%%%%%%%%%%%%%%%%%%%%%%%%%%%%%%%%%%%%%%%%%%%
\section{Asymptotic dynamics}
\label{sec:asymptotic-polymatrix-replicator}

Given a  polymatrix replicator $X_{\nund,A}$, the edges and vertices of the polytope $\Gamma_\nund$ form a (edge-vertex) heteroclinic network for the associated flow.
In this section we recall the technique developed in~\cite{ADP2020}  to analyze the asymptotic dynamics of a flow on a polytope along its heteroclinic edge network. In particular we review the main definitions and results for the  polymatrix replicator  $X_{\nund,A}$  on the polytope $\Gamma_\nund$. 

The affine support of $\Gamma_\nund$ is the smallest affine subspace of $\R^n$ that contains $\Gamma_{\nund}$. It is the subspace
$E=E_1\times\ldots\times E_p$ where for $\alpha=1,\ldots,p$,
$$E_\alpha:=\left\{x^\alpha\in\R^{[\alpha]}\, : \, \sum_{i\in [\alpha]} x^\alpha_i=1\right\}.$$

Following~\cite[Definition 3.1]{ADP2020} we introduce a {\em defining family} for the polytope $\Gamma_{\nund}$.
The affine functions ${\{f_i:E\to\mathbb{R}\}_{1\le i\le n}}$ where $f_i(x)=x_i$, form  a defining family for $\Gamma_{\nund}$ because they satisfy:
\begin{itemize}
\item[(a)]$\displaystyle\Gamma_{\nund}=\bigcap_{i\in I} f_i^{-1}([0,+\infty[)$,
\item[(b)] $\displaystyle\Gamma_{\nund}\cap f_i^{-1}(0)\neq\emptyset$ for all $i\in\{1,\dots,n\}$, and
\item[(c)] given $J\subseteq \{1,\ldots,n\}$ such that $\displaystyle\Gamma_{\nund}\cap\left(\bigcap_{j\in J}f_j^{-1}(0)\right)\neq\emptyset$, the linear $1$-forms $(\d f_j)_p$ are linearly independent at every point $\displaystyle p\in\cap_{j\in J}f_j^{-1}(0)$.
\end{itemize}

\newcommand{\Vset}{\mathcal{V}_{\nund}}
\newcommand{\Eset}{\mathcal{E}_{\nund}}
\newcommand{\Fset}{\mathcal{F}_{\nund}}

Next we introduce convenient labels for vertices, facets  and edges
of $\Gamma_{\nund}$. Let $(e_1,\ldots, e_n)$ be the canonical basis of $\R^n$ and denote by $\Vset$ the Cartesian product 
$\Vset := \prod_{\alpha=1}^p[\alpha]$  which contains $ \prod_{\alpha=1}^p n_\alpha$ elements. Each label
$\j=(j_1,\ldots, j_p)\in\Vset$ determines the vertex
$v_{\j}:=(e_{j_1},...,e_{j_p})$ of $\Gamma_{\nund}$.
This labeling is one-to-one. 
The set $\Fset:=\{1,2,\ldots, n\}$ can be used to label the
$n$ facets of $\Gamma_{\nund}$. Each integer $i\in \Fset$ labels the facet $\sigma_i:= \Gamma_\nund\cap\{x_{i}=0\}$   of
$\Gamma_{\nund}$. Edges can be labeled by the set
$ \Eset:=\left\{ J\in \mathscr{I}_{\nund}  \, \colon \;
\#J=p+1  \; \right\}$.
Given $J\in \Eset$ there exists a unique (unordered) pair of labels
$\j_1, \j_2\in \Vset$ such that 
$J$ is the union of the strategies in $\j_1$ and $\j_2$.
The label $J$ determines the edge
$\gamma_J:=\{ t v_{\j_1}+ (1-t) v_{\j_2} \colon 0\leq t \leq 1\}$. Again the correspondence $J\mapsto \gamma_J$ between
labels $J\in\Eset$ and edges of $\Gamma_{\nund}$ is one-to-one.
 
Given   a vertex $v$ of $\Gamma_{\nund}$, we denote by $F_v$ and $E_v$ respectively the sets of facets and edges of $\Gamma_\nund$ that  contain $v$.
Given $\j=(j_1,\ldots, j_p)\in \Vset$  
$$F_{v_{\j}}=\{\sigma_{i}\, \colon\, i\in\Fset \setminus \{j_1,\ldots, j_p\}   \} $$
and this set of facets contains exactly $n-p=\dim(\Gamma_{\nund})$ elements.

Triples in
$$C:=\{\, (v,\gamma,\sigma)\in V\times E\times F\,\colon\,  \gamma\cap \sigma= \{v\}\, \},$$
are called {\em corners}. Any pair of elements in a corner  uniquely determines the third one.  Therefore,  sometimes we will shortly refer to a corner $(v,\gamma,\sigma)$ as $(v,\gamma)$ or $(v,\sigma)$. An edge $\gamma$ with end-points $v,v'$ determines two corners $(v,\gamma,\sigma)$ and $(v^\prime,\gamma,\sigma')$, called the
{\em end corners of} $\gamma$.  The facets $\sigma,\sigma'$ are referred to as the {\em opposite facets of} $\gamma$. 

\begin{remark}
\label{local coordinate system}
In a small neighborhood of a given vertex $v=v_{\j}$, where $\j=(j_1,\ldots ,j_p)\in \Vset$, the affine functions $f_{k}:\Gamma_\nund\to\mathbb{R}$, $f_{k}(x):=x_{k}$, with $k\in \Fset\setminus \{j_1,\ldots, j_p\}$, can be used as a coordinate system for $\Gamma_\nund$.  
\end{remark}

 Given a polymatrix replicator $X_{\nund,A}$ and a facet $\sigma_{i}$ with $i\in [\alpha]$, $\alpha\in \{1,\ldots, p\}$,  the $i^{\rm th}$ component of $X_{\nund,A}$ is given by
$$\d f_{i}(X_{\nund,A})= x_{i}\,\left( (A\,x)_{i} - \sum_{\beta=1}^p (x^{\alpha})^ T A^{\alpha,\beta} x^{\beta} \right).$$

	A polymatrix replicator $X_{\nund,A}$ is called \emph{non-degenerate} if for any $i\in \Fset$, the  function $H_i:\Gamma_{\nund}\to\R$, 
	$$H_i(x):= f_i(x)^{-1}\, \d f_{i}(X_{\nund,A}(x)) = (A\,x)_{i} - \sum_{\beta=1}^p (x^{\alpha})^ T A^{\alpha,\beta} x^{\beta}  $$
	 is not identically zero along $\sigma_i$.

Clearly generic polymatrix replicators are non-degenerate. 
Using the concept of  order of a vector field along a facet~\cite[Definition $4.2$]{ADP2020},  $X_{\nund,A}$ is   non-degenerate if and only if all facets of $\Gamma_{\nund}$ have order $1$.
From now on we will only consider non-degenerate polymatrix replicators.

\begin{defn}  
	 \label{skeletoncharacter}
	The \emph{skeleton character} of  polymatrix replicator 
	$X_{\nund,A}$ is defined to be the matrix
	$\chi:=(\chi^v_\sigma)_{(v,\sigma_{k_\alpha})\in V\times F}$ where 
	\begin{equation*}
	\chi^v_\sigma:=
	\left\{\begin{array}{ccc}-H_\sigma(v), &\quad& v\in \sigma \\
	0  &\quad& \text{otherwise}\end{array}\right. 
	\end{equation*}
	where $H_\sigma$ stands for $H_i$ when $\sigma=\sigma_i$ with $i\in\Fset$.
	For a fixed vertex $v$, the vector $\chi^v:= (\chi^v_\sigma)_{\sigma\in F}$ is referred to as the \emph{skeleton character} at $v$.
\end{defn}

\begin{remark} Given a corner $(v,\gamma,\sigma)$ of $\Gamma_{\nund}$,	$H_\sigma(v)$ is the eigenvalue of the tangent map $(\d X_{\nund,A})_v$
	along the eigen-direction parallel to $\gamma$.
\end{remark}

\begin{proposition} \label{prop:skeletoncharacter}
If $X_{\nund,A}$ is a non-degenerate polymatrix replicator for every vertex $v=v_{\j}$ with label  
	$\j=(j_1,\dots,j_p)\in\Vset$, and every facet $\sigma=\sigma_i$ with $i\in \Fset$ and $i\in [\alpha]$
	the skeleton character of $X_{\nund,A}$ is given by 
	\begin{align*}
	\chi^{v}_{\sigma}=\left\{\begin{array}{cc}
	\vspace{3mm}
	\sum_{\beta=1}^p(a_{j_\alpha j_\beta}-a_{i j_\beta})&\text{if} \; v \in \sigma \\
	0 &     \text{otherwise} \;.
	\end{array}\right.
	\end{align*}
\end{proposition}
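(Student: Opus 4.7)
The plan is a direct evaluation of the expression for $H_\sigma(v)$ at the vertex, exploiting the simple form of $v_{\j}=(e_{j_1},\ldots,e_{j_p})$ and the definition of the skeleton character.

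First, I would fix a vertex $v=v_{\j}$ with $\j=(j_1,\ldots,j_p)\in\Vset$ and a facet $\sigma=\sigma_i$ with $i\in[\alpha]$. Since $v\in\sigma_i$ precisely when $x_i(v)=0$, and at $v_{\j}$ the component $x_k^\alpha$ vanishes unless $k=j_\alpha$, one immediately observes that $v\notin\sigma_i$ iff $i=j_\alpha$. In the latter case $\chi^v_\sigma=0$ by definition, which accounts for the second branch of the formula.

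Next, assuming $v\in\sigma_i$, I would evaluate the function $H_i$ at $v$. The first term is
\[
(A\,v)_i=\sum_{\beta=1}^p\sum_{k\in[\beta]}a^{\alpha,\beta}_{ik}\,(v^\beta)_k=\sum_{\beta=1}^p a_{i\,j_\beta},
\]
since only the coordinate $k=j_\beta$ of $v^\beta$ is nonzero (and equal to $1$). The second term is
\[
\sum_{\beta=1}^p(v^\alpha)^T A^{\alpha,\beta} v^\beta=\sum_{\beta=1}^p(e_{j_\alpha})^T A^{\alpha,\beta} e_{j_\beta}=\sum_{\beta=1}^p a_{j_\alpha\,j_\beta}.
\]
Subtracting gives $H_i(v)=\sum_{\beta=1}^p\bigl(a_{i\,j_\beta}-a_{j_\alpha\,j_\beta}\bigr)$, and thus
\[
\chi^v_{\sigma_i}=-H_i(v)=\sum_{\beta=1}^p\bigl(a_{j_\alpha\,j_\beta}-a_{i\,j_\beta}\bigr),
\]
which is the desired expression.

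There is essentially no analytical obstacle: the argument is a one-line substitution once the vertex coordinates are unpacked. The only subtlety worth highlighting is bookkeeping, namely that the index $\alpha$ is determined by the choice of $i$ (via $i\in[\alpha]$) and that the formula consistently yields $0$ also in the case $i=j_\alpha$, so the two branches of the piecewise definition glue smoothly. I would conclude by remarking that the remark preceding the proposition (identifying $H_\sigma(v)$ with the transverse eigenvalue of $(\d X_{\nund,A})_v$ along the edge through $v$ contained in $\sigma$) is automatically consistent with this explicit formula, as it simply reads off the diagonal of the linearisation in the coordinate chart of Remark~\ref{local coordinate system}.
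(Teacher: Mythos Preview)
Your proof is correct and is precisely the ``straightforward calculation'' the paper alludes to: the authors give no further details beyond that phrase, and your direct evaluation of $H_i$ at the vertex $v_{\j}=(e_{j_1},\ldots,e_{j_p})$ is exactly the intended argument. The additional observation that the formula also returns $0$ when $i=j_\alpha$ (so the two branches agree) is a nice consistency check that the paper does not bother to mention.
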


\begin{proof}
Straightforward calculation.
\end{proof}

\begin{remark}\label{sign-character}
For a given corner $(v,\gamma,\sigma)$ of $\Gamma_{\nund}$, 
\begin{itemize}
	\item if $\chi^v_\sigma<0$   then $v$  is the $\alpha$-limit of an orbit in $\gamma$, and
	\item if $\chi^v_\sigma>0$  then $v$  is the $\omega$-limit  of an orbit in $\gamma$.
\end{itemize}
Let $\gamma$ be an edge with end-points $v$ and $v'$ and opposite facets
$\sigma$ and $\sigma'$, respectively. This means that
$(v,\gamma, \sigma)$ and $(v',\gamma, \sigma')$ are corners of $\Gamma_{\nund}$. If $X_{\nund,A}$ does not have singularities  in $\inter (\gamma)$, then $\inter (\gamma)$ consists of a single heteroclinic orbit with $\alpha$-limit $v$ and $\omega$-limit $v'$  if and only if $\chi^{v}_{\sigma}<0$ and $\chi^{v'}_{\sigma'}>0$.
This type of edges will be referred to as {\it flowing edges}.
The vertices $v=s(\gamma)$ and $v^\prime=t(\gamma)$ are respectively called the {\em source}  and {\em target}  of the flowing edge $\gamma$ and we will write $v \buildrel\gamma \over\longrightarrow v'$ to express it. When the two characters $\chi^{v}_{\sigma}=\chi^{v'}_{\sigma'}=0$ the edge $\gamma$ its called \emph{neutral}.

\end{remark}

\begin{defn}\label{def:regular}
A polymatrix replicator $X_{\nund,A}$ is called \emph{regular} if it is non-degenerate and moreover every edge is either neutral or a flowing edge.
\end{defn}

Given $v=v_{\j}$ with $\j=(j_1,\ldots, j_p)\in \Vset$  consider the vertex neighborhood 
\[N_v:=\{q\in \Gamma_\nund :\,\, 0\leq f_{k}(q)\leq 1, \; \forall k\in\Fset\backslash \{j_1,\ldots, j_p\} \; \} . \]
Rescaling the defining functions  $f_k$   we may assume these neighborhoods  are pairwise disjoint. See Remark~\ref{local coordinate system}.

 For any edge $\gamma$ with end-points $v$ and $v'$ we  define a tubular neighborhood  connecting $N_{v}$ to $N_{v'}$ by 
\[N_{\gamma}:=\{q\in\Gamma_\nund\backslash (N_{v}\cup N_{v'})\, \colon \, 0\leq f_k(q)\leq 1,\; \forall \, k\in\Fset \text{ with } \gamma\subset \sigma_{k} \}. \]
Again we may assume that these neighborhoods are pairwise disjoint between themselves. Finally we define the {\em edge skeleton's  tubular  neighborhood}
of $\Gamma_{\nund}$ to be
 \begin{equation}\label{eq:N-Gamma}
N_{\Gamma_\nund}:=(\cup_{v\in V}N_v)\cup(\cup_{\gamma\in E}N_\gamma) .
\end{equation} 

 The next step is to define the rescaling map $\Psi^{\nund,A}_\epsilon$ on  $N_{\Gamma_\nund}\backslash \partial \Gamma_{\nund}$. See ~\cite[Definition $5.2$]{ADP2020}. We will write
 $f_\sigma$ to denote the affine function  $f_k$ associated with the facet
 $\sigma=\sigma_k$ with $k\in\Fset$.

\begin{defn}\label{defn:vcoor.ch.}
Let  $\epsilon>0$ be a small parameter.
The $\epsilon$-rescaling  coordinate system $$\Psi_\epsilon^{\nund,A}:N_{\Gamma_\nund}\backslash \partial\Gamma_\nund\to\Rr^F$$  maps $q\in N_{\Gamma_\nund}$ to $y:=(y_{\sigma})_{\sigma\in F}$ where 
\begin{itemize}
\item[$\bullet$] if $q\in N_v$ for some vertex $v$:
$$y_{\sigma}=\left\{ \begin{array}{cll}
-\epsilon^2 \log  f_\sigma(q)  & \text{ if } & v\in \sigma  \\
0 &   \text{ if } &v\notin \sigma 
\end{array}
\right. $$
\item[$\bullet$] if $q\in N_\gamma$ for some edge $\gamma$:
$$y_{\sigma}=\left\{ \begin{array}{cll}
-\epsilon^2 \log  f_\sigma(q)  & \text{ if } &\gamma\subset \sigma \\
0 &   \text{ if } &\gamma\not\subset \sigma
\end{array}
\right.$$
\end{itemize}
\end{defn}

We  now turn to the space where these rescaling coordinates take values.
 For a given vertex $v\in V$ we define
\begin{equation}\label{eq:pi-v}
\Pi_v:=\{\, (y_\sigma)_{\sigma\in F}\in \Rr_+^{F}\,\colon\, y_\sigma=0, \quad \forall \sigma\notin F_v\,\} 
\end{equation}
where $\R_+=[0,+\infty)$.
Since $\{f_\sigma\}_{\sigma \in F_v}$ is a coordinate system over $N_v$ and the function  $h :(0,1]\to [0,+\infty)$,
$h(x):=-\log x$, is a diffeomorphism, the restriction   $\Psi_\epsilon^{\nund,A}: N_v\backslash \partial\Gamma_\nund\to \Pi_v$ is also a diffeomorphism  denoted by $\Psi_{\epsilon,v}^{\nund,A}$.

If $\gamma$ is an edge connecting two corners $(v,\sigma)$ and $(v', \sigma')$, $F_{v}\cap F_{v'}=\{\sigma \in F : \gamma\subset\sigma\}$ and we define
\begin{equation}
\label{eq:pi-gamma}
\Pi_\gamma :=\{\, (y_\sigma)_{\sigma\in F}\in \Rr_+^{F}\,\colon\, y_\sigma=0 \quad \mbox{when}\, \gamma\not\subset \sigma\,\}\;.
\end{equation}
Then $\Psi_\epsilon^{\nund,A}(N_\gamma\backslash \partial\Gamma_{\nund}) = \Pi_\gamma= \Pi_{v}\cap \Pi_{v'}$  
has dimension $d-1$ while $\Pi_v=\Psi_{\epsilon. v}^{\nund,A}(N_v\backslash \partial\Gamma_{\nund})$  has  dimension $d$. In particular the map $\Psi^{\nund,A}_{\epsilon,v}$ is not injective over $N_\gamma$.  
See Figure~\ref{edge-corners}.

\begin{center}
	\begin{figure}[h]
		\includegraphics[width=8cm]{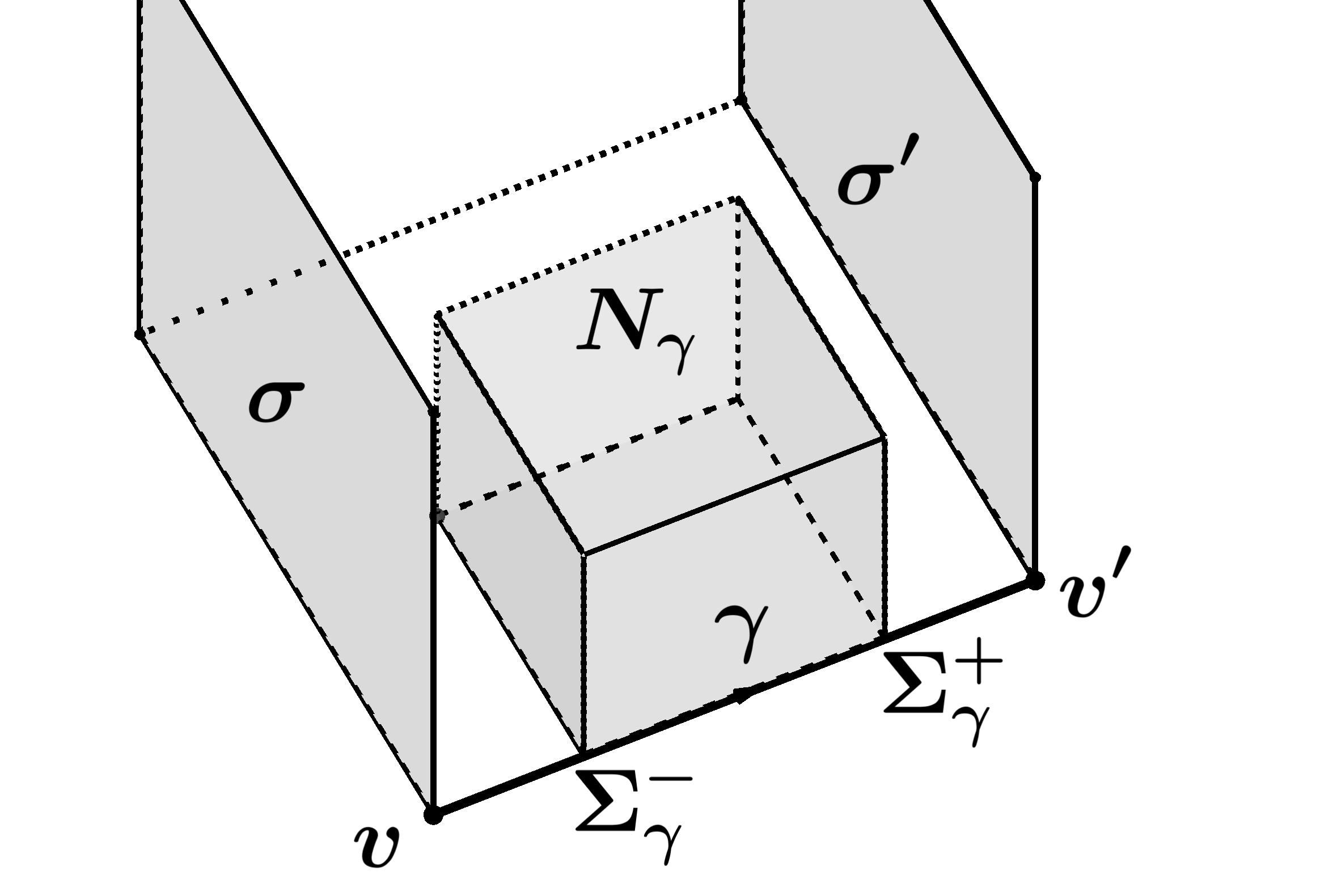}
		\caption{\footnotesize{An edge connecting two corners.}} \label{edge-corners}
	\end{figure}
\end{center}

 \begin{defn}\label{defn:dualcones}
The {\em dual cone} of   $\Gamma_\nund$ is defined to be
$$\CC^\ast(\Gamma_\nund) :=\bigcup_{v\in V}\Pi_v\;, $$
where $\Pi_v$ is the sector in~\eqref{eq:pi-v}. 
\end{defn}

Hence $\Psi_\epsilon^{\nund,A}:N_{\Gamma_{\nund}}\backslash \partial\Gamma_{\nund}\to \CC^\ast(\Gamma_\nund)$.

Denote by $\{ \varphi_{\nund,A}^t:\Gamma_{\nund}\to\Gamma_{\nund} \}_{t\in\R}$ the flow of the vector field $X_{\nund,A}$.
Given a flowing edge $\gamma$ with source $v=s(\gamma)$ and target $v'=t(\gamma)$ we introduce the cross-sections
$$\Sigma_{\gamma}^-:=(\Psi_{v,\epsilon}^{\nund,A})^{-1}(\inter(\Pi_{\gamma}))\quad\mbox{and}\quad \Sigma_{\gamma}^+:=(\Psi_{v',\epsilon}^{\nund,A})^{-1}(\inter(\Pi_{\gamma}))  $$
transversal to the flow   $\varphi^t_{\nund,A}$. The sets $\Sigma_{\gamma}^-$ and $\Sigma_{\gamma}^+$ are inner facets of the tubular neighborhoods   $N_v$ and $N_{v^\prime}$ respectively. 
Let $\Dscr_{\gamma}$ be the set of points $x\in \Sigma^-_\gamma$ such that the forward  orbit $\{\varphi_{\nund,A}^t(x)\colon t>0\}$ has a first transversal intersection with $\Sigma^+_\gamma$.
The global Poincar\'e  map 
$$P_{\gamma}: \Dscr_{\gamma}\subset
\Sigma_\gamma^-  \to \Sigma_\gamma^+ $$
is defined by
$P_{\gamma}(x):= \varphi_{\nund,A}^{\tau(x)}(x)$,
where 
$$ \tau(x)= \min\{\, t>0\,\colon\, 
\varphi_{\nund,A}^t(x) \in \Sigma_{\gamma}^+\, \} \;.$$

 To simplify some of the following convergence statements  we use the terminology in~\cite[Definition $5.5$]{ADP2020}.
 \begin{defn}\label{remark:convergence}
 	Suppose we are given a family of functions $F_\epsilon$ with varying domains $\UU_\epsilon$.
 	Let $F$ be another function with domain $\UU$. Assume that  all these functions have the same  
 	target and source spaces, which are assumed to be linear spaces. 
 	We will say that\, 
 	$ \lim_{\epsilon \rightarrow 0^+} F_\epsilon = F$\,
 	{\em in the  $C^k$ topology},
 	to mean that:
 	\begin{enumerate}
 		\item {\em domain convergence}: for  every compact subset $\, K\subseteq\UU$, 
 		we have $K \subseteq\UU_\epsilon$ for every small enough $\epsilon>0$, and 
 		\item {\em uniform convergence on compact sets}:
 		$$ \lim_{\epsilon \rightarrow 0^+}\;
 		\max_{0\leq i\leq k} \sup_{ u\in K }
 		\left|\, D^i \left[ 
 		F_\epsilon(u) - F(u)
 		\right]\,
 		\right|\; =\; 0\;.$$
 	\end{enumerate}
 	Convergence in the $C^\infty$ topology means
 	convergence in the $C^k$ topology for all $k\geq 1$.
 	If  $F_\epsilon$ is a composition of two or more mappings then
 	its domain should be understood as the composition domain.
 \end{defn}

Let now
\begin{equation}\label{eq:pivepsilon1}
\Pi_{\gamma} (\epsilon):=\{\,y\in\Pi_{\gamma}\,\colon\, y_{\sigma} \geq\epsilon \quad\text{whenever}\quad \gamma \subset \sigma\} ,
\end{equation}
and define
$$F_{\gamma}^\epsilon:=\Psi_{v',\epsilon}^{\nund,A}\circ P_{\gamma} \circ(\Psi_{v,\epsilon}^{\nund,A})^{-1}.$$ 
Notice that $\lim_{\epsilon\to 0} \Pi_{\gamma} (\epsilon)=\inter(\Pi_{\gamma})$.

\begin{lemma}
\label{pgamma}
For a given $k\geq 1$, there exists a number $r$ such that the following limit holds in the $C^k$ topology, 
\begin{align*}
\lim_{\epsilon\to0^+}F^\epsilon_{\gamma|_{\UU_{\gamma}^\epsilon}}=\id_{\Pi_{\gamma}},
\end{align*} 
where  $\UU_{\gamma}^\epsilon\subset \Pi_{\gamma}(\epsilon^r)$ is the domain of $F_{\gamma}^\epsilon$. 
\end{lemma}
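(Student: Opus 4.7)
The plan is to exploit the explicit ODE structure of the polymatrix replicator to reduce this statement to a direct estimate. Every defining function $f_\sigma$ satisfies, along any orbit of $X_{\nund,A}$, the identity
\[ \frac{d}{dt}\log f_\sigma(\varphi^t_{\nund,A}(x)) = H_\sigma(\varphi^t_{\nund,A}(x)). \]
Integrating from $0$ to the transit time $\tau(x)$ from $\Sigma_\gamma^-$ to $\Sigma_\gamma^+$, and rescaling by $-\epsilon^2$, one obtains the explicit formula
\[ (F_\gamma^\epsilon(y))_\sigma - y_\sigma = -\epsilon^2 \int_0^{\tau(x_0)} H_\sigma(\varphi^t_{\nund,A}(x_0))\, dt, \qquad \gamma\subset\sigma, \]
where $x_0 := (\Psi_{v,\epsilon}^{\nund,A})^{-1}(y) \in \Sigma_\gamma^-$. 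The transit time $\tau$ is smooth and uniformly bounded on a neighborhood of $\gamma$ in $\Sigma_\gamma^-$: smoothness comes from the implicit function theorem applied to the equation defining $\Sigma_\gamma^+$ (the defining function of the facet opposite $\gamma$ at $v'$ equals $1$), whose non-degeneracy is ensured by the transversality of $X_{\nund,A}$ to $\Sigma_\gamma^+$ (a consequence of $\gamma$ being a flowing edge); boundedness follows because $N_\gamma$ is compact, bounded away from both vertices, and carries no singularity, so that even as $x_0$ approaches $\gamma$ the orbit still crosses $N_\gamma$ in uniformly bounded time. This immediately yields the $C^0$ estimate $|(F_\gamma^\epsilon(y))_\sigma - y_\sigma| = O(\epsilon^2)$ uniformly on the domain.

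For higher derivatives, chain-rule bookkeeping makes the mechanism transparent. In the local coordinates $(f_\sigma)_{\sigma\in F_v}$ on $\Sigma_\gamma^-$, the map $y\mapsto x_0(y)$ satisfies $f_\sigma(x_0) = e^{-y_\sigma/\epsilon^2}$ for $\sigma\in F_v$ with $\gamma\subset\sigma$, and is constantly equal to $1$ along the facet opposite $\gamma$ at $v$; hence $\partial x_0/\partial y_\rho$ has magnitude $f_\rho(x_0)/\epsilon^2$ in the $\rho$-direction. Every differentiation $\partial/\partial y_\rho$ applied to the right-hand side of the displayed formula — whether it hits the integrand through $dH_\sigma \cdot d\varphi^t_{\nund,A}\cdot \partial x_0/\partial y_\rho$, or the upper limit $\tau$ through implicit differentiation of the level-set equation for $\Sigma_\gamma^+$ — brings down exactly one factor $f_\rho(x_0)/\epsilon^2$, multiplied by ingredients uniformly bounded in $\epsilon$ (namely $dH_\sigma$, $d\varphi^t_{\nund,A}$ for $t$ in a bounded interval, and the implicit-derivative denominator, which is bounded away from zero by transversality). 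Iterating via Fa\`a di Bruno, the $k$-th derivative of $F_\gamma^\epsilon-\id$ on $\Pi_\gamma$ is a sum of terms, each bounded by a polynomial in $\epsilon^{-1}$ times \emph{at least one} factor $f_\rho(x_0)=e^{-y_\rho/\epsilon^2}$.

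Restricting to $\Pi_\gamma(\epsilon^r)$ with any $0<r<2$ defeats this prefactor: for such $y$, $y_\rho/\epsilon^2 \ge \epsilon^{r-2}\to+\infty$, so $e^{-y_\rho/\epsilon^2}$ decays faster than any inverse polynomial in $\epsilon$, giving uniform convergence of $F_\gamma^\epsilon - \id$ and all its derivatives up to order $k$ to zero, hence $F_\gamma^\epsilon\to \id_{\Pi_\gamma}$ in the $C^k$ topology. The domain-convergence requirement of Definition~\ref{remark:convergence} is automatic: any compact $K\subset \inter(\Pi_\gamma)$ satisfies $y_\rho\ge c>0$ on $K$, hence $K\subset \Pi_\gamma(\epsilon^r)\subset \UU_\gamma^\epsilon$ for $\epsilon$ small (once $\epsilon^r<c$ and $x_0(y)\in \Dscr_\gamma$, which holds by continuous dependence of the flow on initial conditions since $x_0$ lies exponentially close to the heteroclinic orbit on $\gamma$). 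The main obstacle is the higher-order chain-rule bookkeeping: one must verify that every extra partial $\partial/\partial y_\rho$ either brings down a bounded factor or a new factor $f_\rho(x_0)/\epsilon^2$, never leaving an uncancelled polynomial in $\epsilon^{-1}$; given the strength of the exponential decay, a single value of $r\in(0,2)$ in fact suffices for all orders $k$.
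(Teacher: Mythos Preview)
Your argument is correct. The paper itself does not prove this lemma but defers entirely to~\cite[Lemma 7.2]{ADP2020}, so there is no in-paper proof to compare against line by line. What you have written is a self-contained argument tailored to the order-one situation (non-degenerate polymatrix replicators), resting on the identity $\tfrac{d}{dt}\log f_\sigma = H_\sigma$ and the boundedness of the transit time across $N_\gamma$; this is indeed the mechanism underlying the cited result, and your integral representation of $(F_\gamma^\epsilon(y))_\sigma - y_\sigma$ makes the $O(\epsilon^2)$ bound and the exponential suppression of derivatives transparent.

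Two small remarks. First, your chain-rule bookkeeping is stated slightly loosely when you say each $\partial/\partial y_\rho$ ``brings down exactly one factor $f_\rho(x_0)/\epsilon^2$'': repeated differentiation of the same exponential produces $f_\rho(x_0)/\epsilon^{2m}$, not $(f_\rho(x_0))^m/\epsilon^{2m}$, so the polynomial prefactor in $\epsilon^{-1}$ grows with $k$ while only a single exponential factor survives. You correctly summarize the outcome (``polynomial in $\epsilon^{-1}$ times at least one factor $e^{-y_\rho/\epsilon^2}$''), and the conclusion stands since $e^{-\epsilon^{r-2}}$ dominates any such polynomial for $r<2$. Second, your observation that a single $r\in(0,2)$ works for every $k$ is in fact stronger than what the lemma claims (where $r$ is allowed to depend on $k$); this is a genuine bonus of the explicit exponential estimate and worth keeping.
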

\begin{proof}
See~\cite[Lemma 7.2]{ADP2020}.
\end{proof}

Hence, since  the  global Poincar\'{e} maps  converge towards the  identity map  as we approach  the heteroclinic orbit, the asymptotic behavior of the flow is solely determined by local Poincar\'{e} maps.  

From Definition~\ref{skeletoncharacter}, for any  vertex $v$, the vector $\chi^v$ is tangent to $\Pi_v$, in the sense that $\chi^v$ belongs to the linear span of the sector $\Pi_v$. 
 Let 
\begin{equation}\label{eq:pivepsilon}
\Pi_v(\epsilon):=\{\,y\in\Pi_v\,\colon\, y_\sigma \geq\epsilon \quad\text{for all}\quad \sigma\in F_v\, \}
\end{equation}
Using the notation of Definition~\ref{skeletoncharacter} we have
\begin{lemma}
\label{lemma:rescal}
We have 
\[(\Psi_{v,\epsilon}^{\nund,A})_\ast X_{\nund,A}=\epsilon^2 
\,\left(\tilde{X}_{v,\sigma}^\epsilon\right)_{\sigma\in F} ,\]
where $$\tilde{X}_{v,\sigma}^\epsilon(y):=
\left\{ \begin{array}{cll}
-H_\sigma\left((\Psi_{v,\epsilon}^{\nund,A})^{-1}(y) \right) &\text{if} & \sigma\in F_v\\
0  &\text{if} & \sigma\notin F_v\\
\end{array}  
\right. ,$$
Moreover, given $k\geq 1$ there exists $r>0$ such that
 the following limit holds in the $C^k$ topology
\[\lim_{\epsilon\to 0}\, ( \tilde{X}_v^\epsilon)_{|_{\Pi_v(\epsilon^r)}}= \chi^v\;.\]
\end{lemma}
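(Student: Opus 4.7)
The first identity is a direct computation from the definition of the rescaling chart. On $N_v$ the map $\Psi_{v,\epsilon}^{\nund,A}$ is given by $y_\sigma=-\epsilon^2\log f_\sigma$ for $\sigma\in F_v$ and $y_\sigma\equiv 0$ for $\sigma\notin F_v$. Applying $dy_\sigma$ to $X_{\nund,A}$ and using the definition $H_\sigma=f_\sigma^{-1}\,df_\sigma(X_{\nund,A})$ yields
\[
dy_\sigma(X_{\nund,A})=-\epsilon^2\,\frac{df_\sigma(X_{\nund,A})}{f_\sigma}=-\epsilon^2\,H_\sigma\circ(\Psi_{v,\epsilon}^{\nund,A})^{-1}
\]
for $\sigma\in F_v$, and zero otherwise, which is the claimed push-forward formula.

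For the second part, the plan is to exploit the super-polynomial decay of $f_\sigma(q)=e^{-y_\sigma/\epsilon^2}$ on the truncated sector $\Pi_v(\epsilon^r)$. For $y\in\Pi_v(\epsilon^r)$ and $\sigma\in F_v$ one has $y_\sigma\geq\epsilon^r$, hence
\[
f_\sigma\bigl((\Psi_{v,\epsilon}^{\nund,A})^{-1}(y)\bigr)\;\leq\;e^{-\epsilon^{r-2}}\;,
\]
which tends to $0$ faster than any polynomial in $\epsilon$ provided $r\in(0,2)$. Thus $(\Psi_{v,\epsilon}^{\nund,A})^{-1}(y)\to v$ uniformly on $\Pi_v(\epsilon^r)$. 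Since $H_\sigma$ is polynomial in the $f_k$-coordinates of Remark~\ref{local coordinate system} around $v$, and $H_\sigma(v)=-\chi^v_\sigma$ by Definition~\ref{skeletoncharacter}, this yields uniform $C^0$ convergence $\tilde X_{v,\sigma}^\epsilon\to\chi^v_\sigma$ for $\sigma\in F_v$; for $\sigma\notin F_v$ both $\tilde X_{v,\sigma}^\epsilon$ and $\chi^v_\sigma$ vanish identically.

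For $C^k$ convergence one differentiates $\tilde X_{v,\sigma}^\epsilon(y)=-H_\sigma\bigl(e^{-y/\epsilon^2}\bigr)$ by iterated chain rule. Since $\partial_{y_\tau}^n f_\tau=(-\epsilon^{-2})^n f_\tau$, every partial derivative of order $j\geq 1$ is a finite sum of terms of the shape $\epsilon^{-2j}\,(\partial_f^\alpha H_\sigma)\prod_\tau f_\tau^{m_\tau}$ with $|\alpha|\geq 1$ and $\sum_\tau m_\tau\geq 1$. Each such term carries at least one factor $f_\tau\leq e^{-\epsilon^{r-2}}$, whose super-polynomial decay dominates the algebraic blow-up $\epsilon^{-2j}$ for every fixed $j\leq k$. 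Hence all derivatives up to order $k$ converge uniformly to $0$ on $\Pi_v(\epsilon^r)$, which matches $D^j\chi^v=0$ since $\chi^v$ is constant. The only technical point is the combinatorial bookkeeping of the iterated chain rule, which is entirely analogous to the general estimate carried out in \cite[Lemma~5.6]{ADP2020}, specialised to the polymatrix setting.
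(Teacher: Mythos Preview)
Your argument is correct. The paper does not give its own proof but simply refers to \cite[Lemma~5.6]{ADP2020}; your proposal is precisely the specialisation of that general estimate to the polymatrix case, as you yourself note in the final sentence, so the approaches coincide.
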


\begin{proof}
See~\cite[Lemma $5.6$]{ADP2020}.
\end{proof}
Consider a vertex $v$ with an incoming flowing-edge $v_\ast\buildrel\gamma \over\longrightarrow v$ and an outgoing flowing-edge  $v \buildrel\gamma'\over\longrightarrow v'$. Denote by $\sigma_\ast$   the facet opposed to $\gamma^\prime$ at $v$. We define the sector  
\begin{equation}\label{eq:hyperplane}                
\Pi_{\gamma,\gamma^\prime} :=\left\{\, y\in {\rm int}(\Pi_{\gamma})\,\colon\,  y_\sigma-\frac{\chi^{v}_\sigma}{\chi^{v}_{\sigma_\ast}}\, y_{\sigma_\ast} > 0, \, \forall  \sigma \in F_{v},\; \sigma\ne \sigma_\ast  \; \right\} 
\end{equation}
and the linear map $L_{\gamma,\gamma^\prime}:\Pi_{\gamma,\gamma^\prime}\to\Pi_{\gamma^\prime}$ by
\begin{equation}\label{eq:L:gamma:gammaprime}
L_{\gamma,\gamma^\prime}(y):=  \left(\, y_\sigma-\frac{\chi^{v}_\sigma}{\chi^{v}_{\sigma_\ast}} \, y_{\sigma_\ast}\, \right)_{\sigma\in F}\; .
\end{equation}
 Notice that $\Pi_{\gamma^\prime}=\{y\in \Pi_{v}: y_{\sigma_\ast}=0\} $ as well as $\Pi_\gamma$ are facets to $\Pi_v$.

\begin{proposition}
	\label{proposition:skeletonflow}
The sector $\Pi_{\gamma,\gamma'}$ consists of all points $y\in {\rm int} (\Pi_{\gamma})$ which can be connected to some point
$y'\in {\rm int} (\Pi_{\gamma'})$ by a line segment inside the
ray $\{\, y+t\chi^v\,
\colon t\geq 0\, \}$. Moreover, if $y\in \Pi_{\gamma,\gamma'}$ then
the other endpoint  is $y'= L_{\gamma,\gamma'}(y)$.
\end{proposition}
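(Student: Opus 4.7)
The plan is to carry out a direct linear-algebra computation on the ray $\{y+t\chi^v : t\geq 0\}$ issuing from $y\in\text{int}(\Pi_\gamma)$. The argument splits naturally into three short steps: a combinatorial/sign set-up at $v$, identification of the unique exit time from $\{y_{\sigma_*}>0\}$, and matching the resulting positivity conditions with the defining inequalities of $\Pi_{\gamma,\gamma'}$ in~\eqref{eq:hyperplane}.

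First, I record the relevant incidences and signs at $v$. Since $\Gamma_\nund$ is a simple polytope, the $d$ edges through $v$ are in bijection with the $d$ facets in $F_v$ via the opposition relation: each edge through $v$ is contained in all facets of $F_v$ except the one opposite to it. In particular, every facet of $F_v$ distinct from the facet opposite to $\gamma$ contains $\gamma$; because $\gamma\neq\gamma'$, this yields $\sigma_* \in F_v \cap F_{v_*}$, where $v_*$ is the other endpoint of $\gamma$. Consequently $y_{\sigma_*}>0$ for every $y\in\text{int}(\Pi_\gamma)$. Moreover, since $\gamma'$ is outgoing from $v$, the corner $(v,\gamma',\sigma_*)$ has $v$ as an $\alpha$-limit, so Remark~\ref{sign-character} gives $\chi^v_{\sigma_*}<0$.

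Second, recall that $\chi^v_\sigma=0$ for $\sigma\notin F_v$, so the whole ray stays in the linear span of $\Pi_v$ and only the coordinates indexed by $F_v$ evolve. The $\sigma_*$-coordinate $y_{\sigma_*}+t\chi^v_{\sigma_*}$ then vanishes at the unique positive time
\[
t^* \,=\, -\frac{y_{\sigma_*}}{\chi^v_{\sigma_*}} \,>\, 0 .
\]
Evaluating the remaining coordinates at $t=t^*$ produces $y_\sigma-(\chi^v_\sigma/\chi^v_{\sigma_*})\,y_{\sigma_*}$, which is precisely the $\sigma$-component of $L_{\gamma,\gamma'}(y)$ defined in~\eqref{eq:L:gamma:gammaprime}. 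Thus the ray meets the hyperplane $\{y_{\sigma_*}=0\}$ in the single point $L_{\gamma,\gamma'}(y)$, so any candidate $y'\in\text{int}(\Pi_{\gamma'})$ lying on the ray is forced to equal $L_{\gamma,\gamma'}(y)$.

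Third, I translate ``$L_{\gamma,\gamma'}(y)\in\text{int}(\Pi_{\gamma'})$'' into strict inequalities. By construction $L_{\gamma,\gamma'}(y)$ has vanishing $\sigma_*$-coordinate and vanishing coordinates outside $F_v$, so it lies in $\Pi_{\gamma'}$ automatically; it lies in the interior if and only if the remaining $d-1$ coordinates are strictly positive, namely
\[
y_\sigma \,-\, \frac{\chi^v_\sigma}{\chi^v_{\sigma_*}}\, y_{\sigma_*} \,>\, 0 \quad \text{for every } \sigma\in F_v\setminus\{\sigma_*\},
\]
which is exactly the defining condition of $\Pi_{\gamma,\gamma'}$. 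This gives both implications at once.

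I do not foresee any serious analytic obstacle: once the sign $\chi^v_{\sigma_*}<0$ and the incidence $\sigma_* \supset \gamma$ are in hand, the proposition reduces to a one-line computation of where an affine ray meets a coordinate hyperplane. The only step that deserves care, and the one I would spell out most explicitly, is the combinatorial claim in the first paragraph, which relies on the simplicity of $\Gamma_\nund$ and on the distinctness of the incoming and outgoing edges at $v$.
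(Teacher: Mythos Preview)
Your argument is correct. The paper itself does not prove this proposition; it simply refers the reader to~\cite[Proposition~6.4]{ADP2020}. Your direct computation---identifying the unique time $t^*=-y_{\sigma_*}/\chi^v_{\sigma_*}$ at which the ray hits $\{y_{\sigma_*}=0\}$, and then matching the positivity of the remaining $F_v\setminus\{\sigma_*\}$ coordinates with the defining inequalities~\eqref{eq:hyperplane}---is exactly the natural proof and is presumably what the cited reference contains. The combinatorial points you flag (that $\sigma_*\supset\gamma$ because $\gamma\neq\gamma'$ in a simple polytope, whence $y_{\sigma_*}>0$ on $\operatorname{int}(\Pi_\gamma)$; and that $\chi^v_{\sigma_*}<0$ because $\gamma'$ is outgoing) are precisely the ingredients needed, and you have them right.
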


\begin{proof}
See~\cite[Proposition $6.4$]{ADP2020}.
\end{proof}

Given flowing-edges  $\gamma$ and $\gamma'$ such that
$t(\gamma)=s(\gamma^\prime)=v$ we denote by  $\Dscr_{\gamma,\gamma^\prime}$  the set of points $x\in  \Sigma_{v,\gamma}$ 
such that the  forward orbit  $\{ \varphi_{\nund,A}^t(x)\colon t\geq 0 \}$  has a first transversal intersection with $\Sigma_{v,\gamma^\prime}$.  The  local Poincar\'e map 
$$P_{\gamma,\gamma^\prime}: \Dscr_{\gamma,\gamma^\prime}\subset
\Sigma_{\gamma}^+\to  \Sigma_{\gamma'}^-$$
is defined by
$P_{\gamma,\gamma^\prime}(x):= \varphi_{\nund,A}^{\tau(x)}(x)$, where
$$ \tau(x):= \min\{\, t>0\,\colon\, 
\varphi_{\nund,A}^{t}(x) \in \Sigma_{\gamma'}^-\, \} \;.$$

\begin{lemma}
	\label{vertexpoincare} 
Let \,$\UU_{\gamma,\gamma^\prime}^\epsilon\subset \Pi_{\gamma}(\epsilon^r)$  be the domain of the map $$F_{\gamma,\gamma^\prime}^\epsilon:=\Psi_{v,\epsilon}^{\nund,A}\circ P_{\gamma,\gamma^\prime} \circ(\Psi_{v,\epsilon}^{\nund,A})^{-1}.$$
Then for a given $k\geq 1$ there exist   $r>0$ such that 
\begin{align*}
\lim_{\epsilon\to0^+}\left(F_{\gamma,\gamma^\prime}^\epsilon\right)_{|_{\UU_{\gamma,\gamma^\prime}^\epsilon}}=
 L_{\gamma,\gamma^\prime} \; 
\end{align*}
in the $C^k$ topology.
\end{lemma}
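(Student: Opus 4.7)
The strategy is to write $F_{\gamma,\gamma'}^\epsilon$ as the first-hit time map of an $\epsilon$-dependent rescaled flow on $\Pi_v$ and then to combine the vector-field convergence from Lemma~\ref{lemma:rescal} with the classical smooth dependence of ODE solutions on parameters. First I would pass to the rescaled picture. By Lemma~\ref{lemma:rescal} the push-forward $(\Psi_{v,\epsilon}^{\nund,A})_\ast X_{\nund,A}$ equals $\epsilon^2\,\tilde X_v^\epsilon$, and after the time reparametrization $s=\epsilon^2 t$ the corresponding flow $\Phi_\epsilon^s$ is generated by $\tilde X_v^\epsilon$, which converges in the $C^k$ topology to the constant vector field $\chi^v$ on any compact subset of $\inter(\Pi_v)$. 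Let $\sigma_\ast$ denote the facet opposed to $\gamma'$ at $v$, so that $\Psi_{v,\epsilon}^{\nund,A}$ sends $\Sigma_{\gamma'}^-$ onto $\inter(\Pi_{\gamma'})\subset\{y_{\sigma_\ast}=0\}$. In these coordinates the map to study reads
$$F_{\gamma,\gamma'}^\epsilon(y)=\Phi_\epsilon^{s^\epsilon(y)}(y),$$
where $s^\epsilon(y)>0$ denotes the first time at which the $\sigma_\ast$-component of $\Phi_\epsilon^s(y)$ vanishes.

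Second, I would solve the limiting problem explicitly. The flow of $\chi^v$ is the translation $\Phi^s(y)=y+s\chi^v$, and since $v=s(\gamma')$, Remark~\ref{sign-character} applied to the corner $(v,\gamma',\sigma_\ast)$ gives $\chi^v_{\sigma_\ast}<0$. Hence for $y\in\inter(\Pi_\gamma)$ the hit time equals $s(y):=-y_{\sigma_\ast}/\chi^v_{\sigma_\ast}>0$, and Proposition~\ref{proposition:skeletonflow} identifies $\Phi^{s(y)}(y)$ with $L_{\gamma,\gamma'}(y)$. The strict inequalities defining $\Pi_{\gamma,\gamma'}$ in~\eqref{eq:hyperplane} are precisely the condition that the segment $\{y+s\chi^v:0\le s\le s(y)\}$ remains in $\Pi_v$ without meeting any facet other than $\{y_{\sigma_\ast}=0\}$ at its endpoint. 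Therefore, for any compact $K\subset\Pi_{\gamma,\gamma'}$, the limiting flow-tube
$$K':=\{y+s\chi^v \colon y\in K,\;0\le s\le s(y)\}$$
is a compact subset of $\inter(\Pi_v)$ on which both the target transversality $\chi^v_{\sigma_\ast}\ne 0$ and the $C^k$ convergence $\tilde X_v^\epsilon\to\chi^v$ hold uniformly.

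Finally, I would close the argument by a Gronwall estimate plus the implicit function theorem. The $C^k$ convergence of the vector fields on $K'$, combined with differentiation of the variational equations, yields $\Phi_\epsilon^s(y)\to y+s\chi^v$ in $C^k$ uniformly on $K\times[0,\max_{K}s(y)+\delta]$ for some $\delta>0$; in particular all such orbits stay inside a compact enlargement of $K'$ lying in $\Pi_v(\epsilon^r)$ once $\epsilon$ is small enough. The transversality $\chi^v_{\sigma_\ast}<0$ at the target hyperplane then allows the implicit function theorem to produce smooth hit times $s^\epsilon(y)$ converging to $s(y)$ in the $C^k$ topology, which simultaneously delivers the domain-convergence condition $K\subset \UU_{\gamma,\gamma'}^\epsilon$ required by Definition~\ref{remark:convergence}. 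Composing the two $C^k$ limits gives
$$F_{\gamma,\gamma'}^\epsilon(y)=\Phi_\epsilon^{s^\epsilon(y)}(y)\longrightarrow \Phi^{s(y)}(y)=L_{\gamma,\gamma'}(y)$$
in the $C^k$ topology uniformly on $K$, which is the stated conclusion.

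The main obstacle is the domain bookkeeping: one has to verify that, for $\epsilon$ sufficiently small, the entire transit of any $y\in K$ under $\Phi_\epsilon^s$ remains inside $\Pi_v(\epsilon^r)$—where Lemma~\ref{lemma:rescal} controls $\tilde X_v^\epsilon$—and meets $\{y_{\sigma_\ast}=0\}$ before crossing any other facet of $\Pi_v$. The openness of the inequalities defining $\Pi_{\gamma,\gamma'}$ in~\eqref{eq:hyperplane}, together with the $C^0$ closeness of $\Phi_\epsilon^s$ to the translation flow, is exactly what makes this verification possible.
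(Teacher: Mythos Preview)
The paper itself does not prove this lemma; it simply refers to \cite[Lemma~7.5]{ADP2020}. Your strategy---push the local Poincar\'e map forward to $\Pi_v$, reparametrize time so that the governing field becomes $\tilde X_v^\epsilon$, invoke Lemma~\ref{lemma:rescal} for $C^k$ convergence to the constant field $\chi^v$, and then read off the linear hitting map via Proposition~\ref{proposition:skeletonflow}---is exactly the natural route and is essentially what the cited reference does.

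There is, however, one technical slip you should repair. Your claim that the limiting flow tube $K'=\{y+s\chi^v:y\in K,\,0\le s\le s(y)\}$ is a compact subset of $\inter(\Pi_v)$ is false: for $s=0$ the points lie on the facet $\Pi_\gamma=\{y_\sigma=0\}$ of $\Pi_v$ (where $\sigma$ is the facet opposed to $\gamma$ at $v$), and for $s=s(y)$ they lie on $\Pi_{\gamma'}=\{y_{\sigma_\ast}=0\}$. This matters because Lemma~\ref{lemma:rescal} only gives convergence $\tilde X_v^\epsilon\to\chi^v$ on $\Pi_v(\epsilon^r)$, i.e.\ where \emph{all} coordinates satisfy $y_\rho\ge\epsilon^r$; near $y_\sigma=0$ or $y_{\sigma_\ast}=0$ the rescaled field $\tilde X_v^\epsilon$ need not be close to $\chi^v$ (indeed $(\Psi_{v,\epsilon})^{-1}$ sends $y_\sigma=0$ to $x_\sigma=1$, a point on the inner facet of $N_v$, not to the vertex $v$). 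You acknowledge the ``domain bookkeeping'' issue in your last paragraph, but your proposed resolution---openness of~\eqref{eq:hyperplane} plus $C^0$ closeness---addresses the intermediate facets, not the entry and exit layers.

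The fix is to split the transit into three pieces: an entry layer $0\le y_\sigma\le\epsilon^r$, a bulk segment where all coordinates are $\ge\epsilon^r$, and an exit layer $0\le y_{\sigma_\ast}\le\epsilon^r$. On the bulk your argument applies verbatim. In each boundary layer the $\sigma$-component (resp.\ $\sigma_\ast$-component) of $\tilde X_v^\epsilon$ is bounded away from zero with the correct sign uniformly in $\epsilon$ (this follows from $\chi^v_\sigma>0$, $\chi^v_{\sigma_\ast}<0$, and the continuity of $H_\sigma$, $H_{\sigma_\ast}$ along the edges $\gamma$, $\gamma'$), so the reparametrized time spent there is $O(\epsilon^r)$; since the other components of $\tilde X_v^\epsilon$ stay uniformly bounded, the total drift accumulated in the layers is $O(\epsilon^r)$ in $C^0$ and similarly controlled in higher derivatives. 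Patching the three pieces then gives the claimed $C^k$ convergence.
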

\begin{proof}
See~\cite[Lemma $7.5$]{ADP2020}.
\end{proof}

Given  a chain of flowing-edges
\begin{equation*} 
v_0\buildrel\gamma_0 \over\longrightarrow  v_1 \buildrel\gamma_1 \over\longrightarrow  v_2 \longrightarrow  \ldots \longrightarrow  v_m \buildrel\gamma_m \over\longrightarrow v_{m+1}
\end{equation*}
the sequence $\xi =(\gamma_0,\gamma_1,\ldots, \gamma_m)$ is called a \emph{heteroclinic path}, or  a \emph{heteroclinic cycle} when $\gamma_{m}=\gamma_0$.

\begin{defn}\label{poincaremap}
 Given a  heteroclinic path  $\xi =(\gamma_0,\gamma_1,\ldots, \gamma_m)$:
 \begin{itemize}
 \item[1)] The {\em Poincar\'e map} of  a polymatrix replicator $X_{\nund,A}$ along $\xi$ is the composition
$$P_\xi:=( P _{\gamma_m}\circ P_{\gamma_{m-1},\gamma_m}) \circ\ldots\circ (P_{\gamma_1}\circ P_{\gamma_0,\gamma_1}),$$
whose domain is denoted by $\UU_\xi$. 
\item[2)] The \emph{skeleton flow map (of $\chi$) along  $\xi$} is the composition map $\pi_\xi:\Pi_\xi\to \Pi_{\gamma_m}$ defined by
$$\pi_\xi:=L_{\gamma_{m-1},\gamma_m}\circ\ldots\circ L_{\gamma_0,\gamma_1}\; ,$$ 
whose domain is   
\begin{align*}
\Pi_\xi & := {\rm int}(\Pi_{\gamma_0}) \cap 
\bigcap_{j=1}^{m} (L_{\gamma_\ast,\gamma_j}\circ\ldots\circ L_{\gamma_0,\gamma_1})^{-1}
 {\rm int}(\Pi_{\gamma_j})   \;. 
\end{align*}
\end{itemize}
\end{defn}

The previous lemmas~\ref{pgamma} and~\ref{vertexpoincare} imply that  given a  heteroclinic path  $\xi$, the  asymptotic behavior of the  Poincar\'e map $P_\xi$  along $\xi$
is given by the  Poincar\'e map $\pi_\xi$ of   $\chi$.
\begin{proposition}
\label{pathpoincare}
Let $\UU_\xi^\epsilon$  be the domain of the map
$$F_\xi^\epsilon:=\Psi_{v_m,\epsilon}^{\nund,A}\circ P_\xi \circ \left(\Psi_{v_0,\epsilon}^{\nund,A}\right)^{-1}$$
from $\Pi_{\gamma_0}(\epsilon^r)$
into $\Pi_{\gamma_m}(\epsilon^r)$.
Then 
\begin{align*}
\lim_{\epsilon\to0^+}\left(F_{\xi}^\epsilon\right)_{|_{\UU_{\xi}^\epsilon}}=\pi_\xi 
\end{align*}
in the $C^k$ topology.
\end{proposition}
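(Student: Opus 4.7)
My plan is to decompose $P_\xi$ as an alternating composition of global and local Poincar\'e maps along the path, conjugate each factor separately via the rescaling coordinates $\Psi_{v_j,\epsilon}^{\nund,A}$ at the vertex where the factor ``lives'', and then invoke the factor-wise convergence results of Lemmas~\ref{pgamma} and~\ref{vertexpoincare}.

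Unrolling Definition~\ref{poincaremap},
\[
P_\xi = P_{\gamma_m}\circ P_{\gamma_{m-1},\gamma_m}\circ P_{\gamma_{m-1}}\circ\cdots\circ P_{\gamma_1}\circ P_{\gamma_0,\gamma_1}\,,
\]
an alternating product of $m$ local vertex maps and $m$ global edge maps. At each joint between two consecutive factors the relevant cross-section sits in a common tubular neighbourhood $N_{v_j}$, so inserting $(\Psi_{v_j,\epsilon}^{\nund,A})^{-1}\circ \Psi_{v_j,\epsilon}^{\nund,A}=\id$ at that joint allows me to rewrite
\[
F_\xi^\epsilon = F_{\gamma_m}^\epsilon\circ F_{\gamma_{m-1},\gamma_m}^\epsilon\circ F_{\gamma_{m-1}}^\epsilon\circ\cdots\circ F_{\gamma_1}^\epsilon\circ F_{\gamma_0,\gamma_1}^\epsilon\,.
\]
Lemma~\ref{pgamma} asserts that each $F_{\gamma_j}^\epsilon$ converges in the $C^k$ topology to $\id_{\Pi_{\gamma_j}}$, and Lemma~\ref{vertexpoincare} that each $F_{\gamma_{j-1},\gamma_j}^\epsilon$ converges in the $C^k$ topology to the linear map $L_{\gamma_{j-1},\gamma_j}$. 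Since the identities drop out of the composition, the limit of the right-hand side is exactly $L_{\gamma_{m-1},\gamma_m}\circ\cdots\circ L_{\gamma_0,\gamma_1}=\pi_\xi$.

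The heart of the argument, and the step I expect to be the main obstacle, is \emph{domain control}. The factor-wise convergence is guaranteed only on shrinking sets $\Pi_{\gamma_j}(\epsilon^{r_j})$, so as one chains the maps together it must be checked that the image of every partial composition still lies in the admissible domain of the next factor. I would argue this inductively. Fix a compact set $K\subset\Pi_\xi$. By the definition of $\Pi_\xi$ (Definition~\ref{poincaremap}) together with Proposition~\ref{proposition:skeletonflow}, each partial linear image $L_{\gamma_{i-1},\gamma_i}\circ\cdots\circ L_{\gamma_0,\gamma_1}(K)$ is a compact subset of $\inter(\Pi_{\gamma_i})$, and therefore is contained in $\Pi_{\gamma_i}(\epsilon^r)$ for every sufficiently small $\epsilon$ and some fixed exponent $r$. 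Combined with the uniform closeness of each $F_{\gamma_j}^\epsilon$ and $F_{\gamma_{j-1},\gamma_j}^\epsilon$ to its limit, this shows inductively that the images of $K$ under the partial compositions of the $F_\bullet^\epsilon$-factors remain in a small neighbourhood of the corresponding linear image, hence inside the admissible domain for the subsequent factor.

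With domain control secured, the $C^k$-convergence of the entire composition follows from the standard chain-rule fact that a composition of finitely many families of mappings converging in the $C^k$ topology (on compacta contained in the limit domain) is itself $C^k$-convergent on compacta. Choosing $r$ as the maximum of the finitely many exponents produced along the path then delivers the claimed limit $\lim_{\epsilon\to 0^+}(F_\xi^\epsilon)_{|_{\UU_\xi^\epsilon}}=\pi_\xi$ in the $C^k$ topology.
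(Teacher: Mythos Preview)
Your approach is correct and is exactly the natural strategy: telescope $F_\xi^\epsilon$ into the alternating composition of the rescaled local and global factors, apply Lemmas~\ref{pgamma} and~\ref{vertexpoincare} factor-wise, and chain the limits together with the inductive domain-control argument you sketch. The paper itself does not give an independent proof here; it simply refers to \cite[Proposition~7.7]{ADP2020}, where this same composition argument is carried out, so there is nothing substantively different to compare.
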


\begin{proof}
See~\cite[Proposition $7.7$]{ADP2020}.
\end{proof}

To analyze the dynamics of the flow of the skeleton vector field $\chi$ we introduce the concept of {\em structural set} and  its associated skeleton flow  map. See~\cite[Definition $6.8$]{ADP2020}.

\begin{defn}\label{structural:set}
	A non-empty set of flowing-edges $S$ 
	is said to be a {\em structural set} for  $\chi$ if
	every heteroclinic cycle  contains an edge in $S$.
\end{defn}

Structural sets are in general not unique.
We say that a heteroclinic path $\xi=(\gamma_0,\ldots,\gamma_m)$ is an $S$-branch if
\begin{enumerate}
	\item $\gamma_0,\gamma_m\in S$,
	\item $\gamma_j\notin S$ for all $j=1,\ldots, m-1$.
\end{enumerate}
Denote by $\Bscr_S(\chi)$ the set of all $S$-branches.

\begin{defn}	\label{def skeleton flow map}
The \textit{skeleton flow  map}  $\pi_S:D_S\to\Pi_S$ is defined by  
$$
\pi_S(y):=\pi_\xi(y)\quad \text{ for all } \; y\in\Pi_\xi,
$$
where 
$$
D_S:=\cup_{\xi\in\Bscr_S(\chi)}\Pi_\xi	
\quad \text{ and } \quad 
\Pi_S:=\cup_{\gamma\in S}\Pi_\gamma.
$$
\end{defn}

%%%%%%%%%%%%%%%%

The reader should picture  $\pi_S:D_S\to\Pi_S$ as the first return map  of the piecewise linear flow 
of $\chi$ on $\CC^\ast(\Gamma_\nund)$ to the system
of cross-sections $\Pi_S$.  
The following, see~\cite[Proposition $6.10$]{ADP2020}, provides a sufficient condition
for  the skeleton flow map $\pi_S$ to be a closed dynamical system.

\begin{proposition}  \label{partition}
	Given a  skeleton vector field $\chi$ on $\CC^\ast(\Gamma_\nund)$  with a structural set $S$, assume  
	\begin{enumerate}
		\item every edge of $\Gamma_{\nund}$ is either neutral or a flowing-edge,
		\item every vertex $v$ is of saddle type, i.e., $\chi^v_{\sigma_1} \chi^v_{\sigma_2}<0$ for some facets $\sigma_1,\sigma_2\in F_v$.
	\end{enumerate}
	Then  
	$$ \hat D_S := \bigcap_{n\in\Zz} (\pi_S)^{-n}(D_S)$$
	is a Baire space with full Lebesgue measure in $\Pi_S$ and 
	   $\pi_S:\hat D_S\to \hat D_S$ is a homeomorphism.
\end{proposition}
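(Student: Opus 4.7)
The plan is to establish the result in four short steps: (i) $D_S$ is open in $\Pi_S$ with full Lebesgue measure; (ii) $\pi_S$ is a piecewise linear, continuous and open map whose branches are linear bijections, so that $\pi_S$ admits an inverse via time reversal; (iii) $\hat D_S$ is a $G_\delta$ of full Lebesgue measure in $\Pi_S$; (iv) the Baire property and the homeomorphism conclusion.

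For (i), the set of $S$-branches $\Bscr_S(\chi)$ is finite, so $D_S=\bigcup_{\xi\in\Bscr_S(\chi)}\Pi_\xi$ is a finite union. By Definition~\ref{poincaremap}, each $\Pi_\xi$ is cut out from $\inter(\Pi_{\gamma_0})$ by finitely many strict linear inequalities (the defining conditions of $\Pi_{\gamma_{j-1},\gamma_j}$ in~\eqref{eq:hyperplane}, pulled back through the composed linear maps $L_{\gamma_{j-2},\gamma_{j-1}}\circ\cdots\circ L_{\gamma_0,\gamma_1}$), hence is open; its complement in $\inter(\Pi_{\gamma_0})$ lies in a finite union of affine hyperplanes, which has Lebesgue measure zero in $\Pi_S$. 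Summing over $\xi$ still produces a null complement of $D_S$ in $\Pi_S$.

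For (ii), each $L_{\gamma,\gamma'}$ is linear on $\Pi_{\gamma,\gamma'}$, and a direct calculation---solving the defining identity~\eqref{eq:L:gamma:gammaprime} for the $\sigma_\ast$-component---shows that $L_{\gamma,\gamma'}$ is invertible: its inverse has the same form, with $\sigma_\ast$ replaced by the facet of $v$ opposite to the incoming edge $\gamma$. Both $\chi^v_{\sigma_\ast}$ and this opposite character are nonzero because $\gamma,\gamma'$ are flowing edges (by hypothesis~(1)). Consequently each branch $\pi_\xi$ is a linear homeomorphism onto an open subset of $\Pi_{\gamma_m}$, and $\pi_S$ is a continuous open map on $D_S$. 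Applying the same construction to the reversed skeleton vector field $-\chi$, whose flowing edges are those of $\chi$ with reversed orientation, produces a first return map that coincides with the pointwise inverse of $\pi_S$ wherever both are defined; saddle-type hypothesis~(2) guarantees that $-\chi$ satisfies hypotheses~(1) and~(2) as well, so its domain is again open and of full measure in $\Pi_S$.

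For (iii) and (iv), the continuity of $\pi_S^{\pm 1}$ on their respective domains implies that $(\pi_S)^{-n}(D_S)$ is open for every $n\in\Zz$. Since each branch of $\pi_S^{\pm n}$ is a finite composition of affine bijections, the preimage of the null set $\Pi_S\setminus D_S$ under any such branch is null; hence $(\pi_S)^{-n}(D_S)$ has full Lebesgue measure. As a countable intersection of open, full-measure subsets of $\Pi_S$, the set $\hat D_S$ is a $G_\delta$ of full measure. The ambient space $\Pi_S$ is a finite union of closed polyhedral cones, hence closed in $\Rr^F$ and therefore completely metrizable; by Alexandrov's theorem, any $G_\delta$ subset of a completely metrizable space is again completely metrizable, and consequently a Baire space. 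The invariance $\pi_S(\hat D_S)=\hat D_S=\pi_S^{-1}(\hat D_S)$ is immediate from the definition of $\hat D_S$, and combined with the continuity of $\pi_S^{\pm 1}$ this yields the required homeomorphism. The principal obstacle will be step~(ii): precisely identifying $\pi_S^{-1}$ with the first return map of the reversed skeleton flow of $-\chi$ and verifying that saddle-type hypothesis~(2) is what ensures the reversed flow inherits the same regularity assumptions; once this time-reversal symmetry is in place, steps~(iii) and~(iv) reduce to the classical Baire--Alexandrov theorem and an elementary measure-theoretic pull-back argument.
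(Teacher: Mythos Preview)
The paper does not prove this proposition here; it simply cites \cite[Proposition~6.10]{ADP2020}. Your outline must therefore be judged on its own merits rather than by comparison with an in-paper argument.

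The overall strategy is sound and steps (ii)--(iv) are essentially correct, but step (i) contains a genuine gap. You assert that for a single branch $\xi$ the complement of $\Pi_\xi$ in $\inter(\Pi_{\gamma_0})$ ``lies in a finite union of affine hyperplanes''. This is false: $\Pi_\xi$ is an open polyhedral cone cut out by strict linear inequalities, so its complement in $\inter(\Pi_{\gamma_0})$ is typically another open region of positive measure, not a null set. Only the \emph{boundary} of $\Pi_\xi$ lies in hyperplanes. The conclusion ``summing over $\xi$ still produces a null complement of $D_S$'' is what you want, but it does not follow from the preceding sentence.

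What is missing is the argument that the cones $\Pi_\xi$, as $\xi$ ranges over all $S$-branches starting at $\gamma_0$, together cover $\inter(\Pi_{\gamma_0})$ up to a null set. This is precisely where hypothesis~(2) enters for the \emph{forward} flow, not only for the reversed flow as you invoke it in step~(ii). The saddle condition guarantees that every vertex $v$ has at least one outgoing flowing edge (some $\chi^v_\sigma<0$), so from a generic point of any $\Pi_\gamma$ the linear flow $y\mapsto y+t\chi^v$ in the adjacent sector $\Pi_v$ reaches a facet $\Pi_{\gamma'}$ in finite time; the exceptional set where it first hits a lower-dimensional face lies in finitely many hyperplanes. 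Iterating, the forward orbit follows a path of flowing edges. Because $S$ is structural, the subgraph of non-$S$ flowing edges is acyclic, and since the edge set is finite any such path has bounded length and must terminate at an edge of $S$. This simultaneously gives the finiteness of $\Bscr_S(\chi)$ (which you asserted without justification) and the full-measure covering you need. Without this step, hypothesis~(2) is never used in the forward direction and the claim that $D_S$ has full measure is unsupported.
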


Given a structural set $S$  any orbit of the flow  $\varphi_{\nund,A}^t$ that shadows some heteroclinic cycle must intersect the  cross-sections $\cup_{\gamma\in S} \Sigma^+_\gamma$  recurrently. The following map encapsulates the semi-global dynamics of these orbits.

\begin{defn}
	Given  $X_{\nund,A}$, let  $S$ be a structural set of its skeleton vector field. We define  $P _S:\UU_{S} \subset \Sigma_S\to \Sigma_S$ setting  $\Sigma_S:= \cup_{\gamma\in S} \Sigma_\gamma^+$,
	$\UU_{S}:= \cup_{\xi\in \Bscr_S(\chi)} \UU_{\xi}$
	and $P_S(p):=P_\xi(p)$ for all $p\in \UU_{\xi}$.
	The domain components $\UU_{\xi}$ and $\UU_{\xi'}$
	are disjoint for branches $\xi\neq \xi'$  in $\Bscr_S(\chi)$.
\end{defn}

Up to a time reparametrization, the  map
$P_{S}:D_{S} \subset \Sigma_S\to \Sigma_S$ embeds  in the flow $\varphi^t_{(\nund,A)}$.
In this sense the dynamics of   $P_{S}$ encapsulates the qualitative behavior
of the flow $\varphi_X^t$ of $X$ along the edges of $\Gamma_{\nund}$.

\begin{theorem}\label{asymp:main:theorem}
	Let $X_{\nund,A}$ be a regular polymatrix replicator 
	with skeleton vector field $\chi$. If $S$ is a structural set of $\chi$ then  
	$$ \lim_{\epsilon\to 0^ +}  \Psi_\epsilon \circ \Poin{X}{S}\circ
	(\Psi_\epsilon)^{-1} = \skPoin{\chi}{S}  $$
	in the $C^\infty$ topology, in the sense of Definition~\ref{remark:convergence}.
\end{theorem}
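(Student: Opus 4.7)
The theorem is essentially an assembly statement: Proposition~\ref{pathpoincare} already provides the needed $C^k$ convergence $F^\epsilon_\xi \to \pi_\xi$ along each individual $S$-branch $\xi$, so the plan is to (i) decompose $P_S$ and $\pi_S$ along $S$-branches, (ii) bound the number of branches meeting any compact subset of $D_S$, and (iii) combine the branch-wise convergences into a single $C^\infty$ statement.

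For the decomposition, by the very definitions of $P_S$ and $\pi_S$ one has $\UU_S=\bigsqcup_{\xi\in\Bscr_S(\chi)}\UU_\xi$ and $D_S=\bigsqcup_{\xi\in\Bscr_S(\chi)}\Pi_\xi$, with $P_S|_{\UU_\xi}=P_\xi$ and $\pi_S|_{\Pi_\xi}=\pi_\xi$, so that
$$
\Psi_\epsilon\circ P_S\circ\Psi_\epsilon^{-1}
=\bigsqcup_{\xi\in\Bscr_S(\chi)} F^\epsilon_\xi .
$$
I would then observe that $\Bscr_S(\chi)$ is itself a finite set: if $\xi=(\gamma_0,\ldots,\gamma_m)$ is an $S$-branch with intermediate vertex sequence $v_1,\ldots,v_m$, then any repetition $v_i=v_j$ with $1\le i<j\le m$ would produce a heteroclinic cycle consisting entirely of edges outside $S$, contradicting Definition~\ref{structural:set}. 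Since $\Gamma_\nund$ has finitely many vertices, $m$ is uniformly bounded and $\Bscr_S(\chi)$ is finite.

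For the assembly step, fix a compact set $K\subset D_S$ and an integer $k\geq 1$. Because the cones $\Pi_\xi$ are pairwise disjoint and each is clopen in $D_S$, the partition $K=\bigsqcup_{j=1}^N K_{\xi_j}$ with $K_{\xi_j}:=K\cap \Pi_{\xi_j}$ involves only finitely many branches $\xi_1,\ldots,\xi_N\in\Bscr_S(\chi)$ and each $K_{\xi_j}\subset\Pi_{\xi_j}$ is compact. Proposition~\ref{pathpoincare} applied to each $\xi_j$ delivers an exponent $r_j=r(\xi_j,k)>0$ such that $K_{\xi_j}\subset \UU_{\xi_j}^\epsilon$ for all sufficiently small $\epsilon$ (domain convergence), together with $C^k$ uniform convergence $F^\epsilon_{\xi_j}\to \pi_{\xi_j}$ on $K_{\xi_j}$. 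Taking the maximum of $r_1,\ldots,r_N$ and of the corresponding error terms yields the analogous statement for $\Psi_\epsilon\circ P_S\circ\Psi_\epsilon^{-1}$ and $\pi_S$ on $K$. Since $k\geq 1$ was arbitrary, this is $C^\infty$ convergence in the sense of Definition~\ref{remark:convergence}.

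The heavy lifting has already been done in Lemmas~\ref{pgamma} and~\ref{vertexpoincare} and packaged in Proposition~\ref{pathpoincare}, so no further hyperbolic estimates or explicit coordinate computations are needed at this stage. The main subtlety is the finiteness argument, which is exactly where the structural-set hypothesis is genuinely used: without it, branches could accumulate in $D_S$ and the global $C^\infty$ statement would not follow from the branch-wise one. Beyond this, the proof is purely combinatorial bookkeeping over the finite collection $\Bscr_S(\chi)$.
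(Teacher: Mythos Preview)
Your assembly argument is correct and is exactly the natural route: Proposition~\ref{pathpoincare} handles each branch, the structural-set hypothesis forces $\Bscr_S(\chi)$ to be finite via your no-repeated-vertex observation, and the clopen decomposition of $D_S$ lets you patch the branch-wise $C^k$ estimates over any compact set. The paper itself does not give a self-contained proof here; it simply cites \cite[Theorem~7.9]{ADP2020}, since all the ingredients (Lemmas~\ref{pgamma}, \ref{vertexpoincare} and Proposition~\ref{pathpoincare}) are already imported from that reference, and your sketch is essentially how that cited result is obtained from those building blocks.
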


\begin{proof}
See~\cite[Theorem $7.9$]{ADP2020}.
\end{proof}

 %%%%%%%%%%%%%%%%%%%%%%%%%%%%%%%
\section{Hamiltonian character of the asymptotic dynamics}
\label{sec:hamiltonian-character}

In this section we  discuss the Poisson geometric properties of the Poincar\'e maps $\pi_\xi$ in the case of Hamiltonian polymatrix replicator equations. 
Given a generic Hamiltonian polymatrix replicator, $X_{\nund,A_0}$, we study its asymptotic Poincar\'e maps,
proving that they are Poisson maps.

Let $X_{\nund, A}$ be a conservative polymatrix replicator, $q$ a formal equilibrium,
$A_0$ and $D$ as in Definition~\ref{CPR}, and 
\begin{equation}\label{eq:hamiltonian-101}
h(x) =\sum_{\beta=1}^p\sum_{j\in [\beta]}\lambda_\beta q^{\beta}_j\log x_j^{\beta}
\end{equation}
 its Hamiltonian function as in Theorem~\ref{conservative:hamiltonian}. 
The Hamiltonian~\eqref{eq:hamiltonian-101} belongs to a class of prospective constants of motion for vector fields on polytopes discussed in~\cite[Section $8$]{ADP2020}. Since the polymatrix replicator is fixed   we drop superscript ``$\nund, A$'' and use  $\Psi_{v,\epsilon}$ for the rescaling coordinate systems defined in Definition~\ref{defn:vcoor.ch.}. The following proposition gives the asymptotic constant of motion, on the dual cone, associated ${\mbox{to}\,h.}$

\begin{proposition}\label{asymptotic-constants} Given $\eta:\CC^\ast(\Gamma_\nund)\to \Rr$ defined by
\begin{equation}\label{eq:eta}
\eta(y):=\sum_{\beta=1}^p\sum_{j\in [\beta]} \lambda_\beta q^{\beta}_j y_j^{\beta},
\end{equation}
\begin{enumerate}
	\item 
	$\displaystyle \eta =\lim_{\epsilon \rightarrow 0^+} \epsilon^2  
	h\circ \left(\Psi_{v,\epsilon}\right)^{-1}$  over $\inter(\Pi_v)$ for any vertex $v$, with convergence in the $C^\infty$ topology;
	\item 
	$\displaystyle d\eta = \lim_{\epsilon \rightarrow 0^+} \epsilon^2
	\left[ \left(\Psi_{v,\epsilon}\right)^{-1} \right]^\ast \left( d h \right)$ over  $\inter(\Pi_v)$ for any vertex $v$,	with convergence in the $C^\infty$ topology;
	\item Since $h$ is invariant under the flow of ${X_{\nund,A}}$, i.e., $dh(X_{\nund,A})\equiv 0$,
	the function $\eta$ is invariant under the skeleton  flow of $\chi$, i.e., $d\eta(\chi)\equiv 0$.
\end{enumerate} 
\end{proposition}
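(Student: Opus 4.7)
The three assertions will be established in order. Part (1) is a direct computation from the definition of $\Psi_{v,\epsilon}$, with an exponentially small remainder controlled uniformly on compact subsets of $\inter(\Pi_v)$. Part (2) is a consequence of (1) together with the fact that exterior differentiation is continuous with respect to $C^\infty$ convergence. Part (3) combines (2), the vector field rescaling Lemma~\ref{lemma:rescal}, and the invariance $dh(X_{\nund,A})\equiv 0$ given by Theorem~\ref{conservative:hamiltonian}.

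For (1), fix a vertex $v=v_{\j}$ with $\j=(j_1,\ldots,j_p)$ and split the sum defining $h$ according to whether $j=j_\beta$ or $j\neq j_\beta$. For $j\neq j_\beta$, the defining relation $\log x_j^\beta=-y_j^\beta/\epsilon^2$ of $\Psi_{v,\epsilon}$ converts these terms into linear functions of $y$ after multiplication by $\epsilon^2$; summing over $\beta$ and noting that $y_{j_\beta}^\beta=0$ on $\Pi_v$ recovers exactly $\eta(y)$. For $j=j_\beta$, the simplex constraint gives $x_{j_\beta}^\beta=1-\sum_{k\in[\beta]\setminus\{j_\beta\}}e^{-y_k/\epsilon^2}$; on any compact $K\subset\inter(\Pi_v)$ there is $\delta=\delta(K)>0$ with $y_k\geq\delta$ for every $k\in F_v$, so these terms are $O(\epsilon^2 e^{-\delta/\epsilon^2})$ uniformly on $K$, and the same bound holds for every $y$-derivative.

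For (2), since $\Psi_{v,\epsilon}$ is a diffeomorphism of $N_v\setminus\partial\Gamma_\nund$ onto its image, one has
\[
\epsilon^2\bigl[(\Psi_{v,\epsilon})^{-1}\bigr]^\ast(dh)=d\!\left(\epsilon^2\,h\circ(\Psi_{v,\epsilon})^{-1}\right),
\]
so applying $d$ to the $C^\infty$ limit of part (1) gives exactly the $C^\infty$ limit claimed in (2).

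For (3), pushing the identity $dh(X_{\nund,A})\equiv 0$ forward by $\Psi_{v,\epsilon}$ and using $(\Psi_{v,\epsilon})_\ast X_{\nund,A}=\epsilon^2\,\tilde X_v^\epsilon$ from Lemma~\ref{lemma:rescal} yields
\[
d\!\left(\epsilon^2\,h\circ(\Psi_{v,\epsilon})^{-1}\right)\!\left(\tilde X_v^\epsilon\right)\equiv 0\quad\text{on}\;\Pi_v(\epsilon^r).
\]
Letting $\epsilon\to 0^+$ and combining (2) with the $C^\infty$ limit $\tilde X_v^\epsilon\to\chi^v$ from Lemma~\ref{lemma:rescal} gives $d\eta(\chi^v)\equiv 0$ on $\inter(\Pi_v)$. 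Since $v$ was arbitrary and $\chi|_{\Pi_v}=\chi^v$, we obtain $d\eta(\chi)\equiv 0$ on $\CC^\ast(\Gamma_\nund)$. The only nontrivial bookkeeping is the derivative estimate in part (1): each $y$-derivative of $e^{-y_k/\epsilon^2}$ carries at most a polynomial factor in $1/\epsilon^2$ which is dominated by $e^{-\delta/\epsilon^2}$ on compacts, so $C^\infty$ convergence is preserved at every order without further effort.
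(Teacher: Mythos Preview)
Your argument is correct and is the natural direct computation: split $h$ into the coordinates indexed by $F_v$ (which become linear in $y$ after the change of variables) and the remaining ``diagonal'' coordinates $x_{j_\beta}^\beta$ (which, via the simplex constraint, contribute terms of order $\epsilon^2 \log\bigl(1-\sum_k e^{-y_k/\epsilon^2}\bigr)$ that vanish in $C^\infty$ on compacta). Part~(2) then follows because $d$ commutes with pullback and is continuous for $C^\infty$ convergence, and part~(3) is the limit of the pushed-forward identity $dh(X_{\nund,A})=0$ combined with Lemma~\ref{lemma:rescal}.

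The paper itself does not give a proof here; it simply refers to \cite[Proposition~8.2]{ADP2020}, so there is no alternative argument to compare against. One minor point worth double-checking in your write-up: with the convention $y_\sigma=-\epsilon^2\log x_\sigma$ from Definition~\ref{defn:vcoor.ch.}, the main terms give $\epsilon^2\lambda_\beta q_j^\beta\log x_j^\beta=-\lambda_\beta q_j^\beta y_j^\beta$, so the limit is literally $-\eta(y)$ rather than $\eta(y)$. This sign discrepancy is already present in the paper (compare the $\epsilon^{-2}$ in Section~\ref{sec:outline} with the $\epsilon^2$ in the statement here) and does not affect any of the downstream uses, since only the invariance $d\eta(\chi)=0$ and the level-set geometry matter; but you may want to flag it or silently absorb the sign into the definition of $\eta$.
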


\begin{proof}
See~\cite[Proposition $8.2$]{ADP2020}.
\end{proof}

We will use the following family of coordinate charts for the Poisson manifold $(\inter(\Gamma_\nund),\pi_{A_0})$ where $\pi_{A_0}$ is defined in~\eqref{eq:Poissonpmg}.

\begin{defn}\label{familly-of-charts}	Given a vertex $v=(e_{j_1},...,e_{j_p})$ of $\Gamma_\nund$,	we set $\hat{x}_\alpha:=(x_{k}^\alpha)_{k\in[\alpha]\setminus\{j_\alpha\} }$ and    $\hat{x}:=(\hat{x}^\alpha)_{\alpha}$, and define the projection map 
  $$P_v:\inter(N_v)\to(\mathbb{R}^{n_1-1}\times\ldots\times\mathbb{R}^{n_p-1}), \quad P_v(x):=\hat x .$$ 
$P_v$ is a diffeomorphism onto its image $(0,1)^{n-p}$ and
the inverse map  $\psi_v:=P^{-1}_v$  can be regarded as a local chart for the manifold $\inter(\Gamma_\nund)$.
\end{defn}

\begin{remark}
The projection map $P_v$ extends linearly to $\R^n$ and it is represented by the $(n-p)\times n$ block diagonal matrix 
 $$P_v = \diag(P_v^1,\ldots,P_v^p),$$
where  $P_v^\alpha$, for $\alpha=1,\ldots,p$, is the ${(n_\alpha-1)\times n_\alpha}$ constant matrix  obtained  from the identity matrix by removing its row $j_\alpha$.
\end{remark}

%As in Section~\ref{sec:polymatrix-replicators},
%given $x\in \Rr^n$, we denote by $D_x$ the $n\times n$ diagonal matrix
%$D_x := \diag(x_1,\dots ,x_n)$, and for each $\alpha\in\{1,\ldots, p\}$ we define the $n_\alpha\times n_\alpha$ matrix
%$$ T^\alpha_x:= x^\alpha\, \unit^t -I\;,$$
%and $T_x$ the $n\times n$ block diagonal matrix
%$T_x := \diag(T^1_x,\dots ,T^p_x)$.

Using the definitions of $D_x$ and $T_x$ given in Section~\ref{sec:polymatrix-replicators} we can state the following lemma.

\begin{lemma}
Consider the Poisson manifold $(\inter(\Gamma_\nund),\pi_{A_0})$ where $\pi_{A_0}$ is defined in~\eqref{eq:Poissonpmg}. Then for any vertex $v$, the  matrix representative of  $\pi_{A_0}$ in the local chart $\psi_v$ is
\begin{equation}\label{eq:representing-matrix-nv}
\pi^{\sharp_v}_{A_0}(\hat{x})=(-1)\, P_v\,T_x\, D_x \, A_0\, D_x\,T_x^t\,P_v^t.
\end{equation}
\end{lemma}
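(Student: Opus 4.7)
My plan is to transport the explicit matrix formula~\eqref{eq:Poissonpmg} to the chart $\psi_v$ by a direct application of the chain rule. I would first recall how $\pi_{A_0}$ defines a bracket on $C^\infty(\inter(\Gamma_\nund))$: given $f,g$ and arbitrary smooth extensions $\tilde f,\tilde g$ to a neighborhood of $x$ in $\R^n$, one sets
\[
\{f,g\}(x) \;=\; -\,(\nabla \tilde f(x))^t\, T_x\, D_x\, A_0\, D_x\, T_x^t\, \nabla \tilde g(x).
\]
Independence from the chosen extension is the content of~\cite{AD2015}*{Theorem $3.5$}; the key point is that for each $\alpha$ the simplex identity $\sum_{i\in[\alpha]} x_i^\alpha=1$ gives $(T_x^\alpha)^t\unit=0$, so $T_x^t$ annihilates the ambiguity in $\nabla\tilde f$, which lies in the normal directions $\unit\in\R^{n_\alpha}$ of the affine support $E$.

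Next, I would note that $P_v$ extends to a linear map $\R^n\to\R^{n-p}$ (the constant block matrix described after Definition~\ref{familly-of-charts}), so for any $\hat f\in C^\infty((0,1)^{n-p})$ the formula $\tilde f := \hat f\circ P_v$ yields a canonical smooth extension to a neighborhood in $\R^n$ of the chart pullback $f = \hat f\circ P_v|_{\inter(N_v)}$. By the chain rule,
\[
\nabla \tilde f(x) \;=\; P_v^t\, \nabla \hat f(\hat x), \qquad \hat x := P_v(x),
\]
and similarly for $g$. Plugging these into the bracket formula and collecting factors gives
\[
\{f,g\}(x) \;=\; -\,(\nabla \hat f(\hat x))^t\,\bigl[\,P_v\, T_x\, D_x\, A_0\, D_x\, T_x^t\, P_v^t\,\bigr]\, \nabla \hat g(\hat x).
\]

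Since $\psi_v$ is a diffeomorphism between $(0,1)^{n-p}$ and $\inter(N_v)$, this identifies $-\,P_v\, T_x\, D_x\, A_0\, D_x\, T_x^t\, P_v^t$ as the matrix representative of $\pi^{\sharp_v}_{A_0}$ in the chart $\psi_v$, via the general coordinate expression $\{\hat f,\hat g\}(\hat x) = (\nabla\hat f)^t\, \pi^{\sharp_v}_{A_0}(\hat x)\, \nabla \hat g$. This is exactly~\eqref{eq:representing-matrix-nv}. I expect no real obstacle: the one delicate step is the well-definedness of the bracket with respect to the choice of extension, and this is already taken care of in~\cite{AD2015} as an immediate consequence of $(T_x^\alpha)^t\unit=0$.
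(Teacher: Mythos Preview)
Your proposal is correct and amounts to the same argument as the paper, just expanded out. The paper's proof is a single line: the matrix representative in the chart $\psi_v$ is by definition $[\{x^\alpha_k,x^\beta_l\}]_{k\neq j_\alpha,\,l\neq j_\beta}$, and since the $x^\alpha_k$ are restrictions of ambient coordinate functions this is precisely the submatrix of $\pi_{A_0}(x)$ obtained by deleting the rows and columns indexed by $j_1,\dots,j_p$, i.e.\ $P_v\,\pi_{A_0}(x)\,P_v^t$. Your chain-rule computation with the specific extension $\tilde f=\hat f\circ P_v$ recovers exactly this when specialized to coordinate functions; the discussion of well-definedness via $(T_x^\alpha)^t\unit=0$ is correct but already absorbed into the paper's citation of \cite{AD2015}.
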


\begin{proof}
Notice that $\pi^{\sharp_v}_{A_0}(\hat{x}):=[\{x^\alpha_{k},x^\beta_{l}\}]$ with $\alpha,\beta=1,\ldots,p$ and $k\in[\alpha]\setminus\{j_\alpha\} $, $l\in [\beta]\setminus\{j_\beta\} $.
\end{proof}

We used the notation $\sharp_v$ instead of $\sharp$ to make it clear that the representing matrix is with respect to the local chart $\psi_v$.
The following trivial lemma gives us  the differential of the $\epsilon$-rescaling map $\Psi_{v,\epsilon}$  (in Definition~\ref{defn:vcoor.ch.}) for the coordinate chart $\psi_v$.
Given a vertex $v=(e_{j_1},\ldots, e_{j_p})$ and using the notation introduced in Definition~\ref{familly-of-charts} we write 
$D_{\hat x_\alpha}=\diag(x_1^\alpha,\ldots,x_{j_\alpha-1}^\alpha,x_{j_\alpha+1}^\alpha,\ldots,x_{n_\alpha}^\alpha)$
and denote by 
$D_{\hat x}$ the diagonal matrix $\diag(D_{\hat x_1}, \ldots, D_{\hat x_p})$. 

\begin{lemma}
The differential of the diffeomorphism  $${\Psi_{v,\epsilon}\circ\psi_v:P_v(\inter(N_v))\to \inter(\Pi_v)}$$ 
is given by
\[d_{\hat{x}}(\Psi_{v,\epsilon}\circ\psi_v)=-\epsilon^2
\, D_{\hat x}^{-1} .
%\diag(D^1(\Psi_{v,\epsilon}\circ\phi_v)_{\hat{x}^1},\ldots,D^p(\Psi_{v,\epsilon}\circ\phi_v)_{\hat{x}^p}),
\]
\end{lemma}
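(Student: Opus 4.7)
The proof is a direct chain-rule computation, and the main work is just bookkeeping on which coordinates appear and which are zero. I sketch the plan below.

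First I would identify coordinates on source and target. By Definition~\ref{familly-of-charts}, the chart $\psi_v$ gives coordinates $\hat{x}=(x_k^\alpha)_{\alpha,\,k\in[\alpha]\setminus\{j_\alpha\}}$ on $\inter(N_v)$; composing with $\psi_v$ simply recovers the full point $x\in\inter(N_v)$, with the ``missing'' coordinate $x_{j_\alpha}^\alpha$ reconstructed from the simplex constraint $\sum_{k\in[\alpha]}x_k^\alpha=1$. On the target side, points of $\inter(\Pi_v)$ are vectors $y=(y_\sigma)_{\sigma\in F}$ supported on $F_v$, and by the characterization of $F_v$ in Section~\ref{sec:asymptotic-polymatrix-replicator}, $F_v$ is indexed exactly by $k\in\Fset\setminus\{j_1,\ldots,j_p\}$, which is in bijection with the indices carried by $\hat{x}$.

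Next I would write out $\Psi_{v,\epsilon}\circ\psi_v$ componentwise. Using Definition~\ref{defn:vcoor.ch.} in the vertex case together with $f_{\sigma_k}(q)=x_k$, the nontrivial components of the composition are
\[
y_k \;=\; -\epsilon^{2}\log x_k, \qquad k\in\Fset\setminus\{j_1,\ldots,j_p\},
\]
while $y_\sigma=0$ identically for $\sigma\notin F_v$. These are precisely the coordinates appearing in $\hat{x}$.

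Third, I would differentiate. For $k,l\in\Fset\setminus\{j_1,\ldots,j_p\}$,
\[
\frac{\partial y_k}{\partial x_l} \;=\; -\epsilon^{2}\,\frac{\delta_{kl}}{x_k},
\]
and the components of $y$ that vanish identically contribute zero rows. Assembling these partial derivatives in the same block-diagonal order used to define $D_{\hat{x}}=\diag(D_{\hat{x}_1},\ldots,D_{\hat{x}_p})$ yields the matrix $-\epsilon^{2} D_{\hat{x}}^{-1}$, as claimed.

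There is no genuine obstacle; the only point that requires a moment of care is matching the labeling of $F_v$ with the indices retained in $\hat{x}$, i.e.\ checking that both are parametrized by $\Fset\setminus\{j_1,\ldots,j_p\}$, so that the source and target of $d(\Psi_{v,\epsilon}\circ\psi_v)$ are naturally identified and the resulting Jacobian is square and diagonal.
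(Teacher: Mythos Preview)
Your proposal is correct and is exactly the straightforward computation the paper has in mind; in fact the paper calls this lemma ``trivial'' and gives no proof at all. Your only care point---matching the indices of $F_v$ with those retained in $\hat{x}$---is the right thing to flag, and your treatment of it is fine.
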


We push forward, by the  diffeomorphism $\Psi_{v,\epsilon}\circ\psi_v$, the Poisson structure $\pi^{\sharp_v}_{A_0}$ defined on   $P_v(\inter(N_v))$ to $\inter(\Pi_v)$.
The following lemma provides the matrix representative of the push forwarded Poisson structure.
In order to simplify the notation we set
\begin{equation}\label{notation-sec-5-1}
\mathbb{J}(\hat{x}):=-\epsilon^2
\, D_{\hat x}^{-1} \, P_v\,T_x\, D_x \,
\end{equation}
and for every $\alpha=1,\ldots,p$
\begin{equation}\label{notation-sec-5-2}
\mathbb{J}_\alpha(\hat{x}^\alpha):= -\epsilon^2
\, D_{\hat x_\alpha}^{-1}
\, P^\alpha_v\,T^\alpha_xD_{x^\alpha}.
\end{equation}

Notice that $\mathbb{J}(\hat{x})=\diag(\mathbb{J}_1(\hat{x}^1),\ldots,\mathbb{J}_p(\hat{x}^p))$.

\begin{lemma}
The diffeomorphism $\Psi_{v,\epsilon}\circ\psi_v$ pushes forward the Poisson structure $\pi^{\sharp_v}_{A_0}$ to the  Poisson structure $\pi^{\sharp_v}_{A_0,\epsilon}$ on $\inter(\Pi_v)$ where
\begin{align}\label{pi-hat}
\pi^{\sharp_v}_{A_0,\epsilon}(y)=(-1)(\mathbb{J} A_0\mathbb{J}^t)\circ(\Psi_{v,\epsilon}\circ\psi_v)^{-1}(y).
\end{align} 
\end{lemma}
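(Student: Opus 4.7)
The plan is to apply the push-forward formula for Poisson structures under a diffeomorphism, as recorded in Definition~\ref{def:Poissonmap} and Remark~\ref{remark:push-pull-Poisson}, with the diffeomorphism $\psi := \Psi_{v,\epsilon}\circ\psi_v : P_v(\inter(N_v)) \to \inter(\Pi_v)$. In matrix form, relation~\eqref{eq:Poissoncondition2} says the pushed-forward bivector at the image point $y = \psi(\hat{x})$ is
\[
\pi^{\sharp_v}_{A_0,\epsilon}(y) \;=\; (d_{\hat{x}}\psi)\,\pi^{\sharp_v}_{A_0}(\hat{x})\,(d_{\hat{x}}\psi)^{t},
\]
so the entire argument reduces to substituting the two preceding lemmas and simplifying.

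First I would plug in the data. The second of the two preceding lemmas gives $d_{\hat{x}}\psi = -\epsilon^{2}\, D_{\hat x}^{-1}$, a diagonal (hence symmetric) matrix, so $(d_{\hat{x}}\psi)^{t} = -\epsilon^{2}\, D_{\hat x}^{-1}$ as well. The first lemma gives the chart representative $\pi^{\sharp_v}_{A_0}(\hat{x}) = -\, P_v\,T_x\, D_x\, A_0\, D_x\, T_x^{t}\, P_v^{t}$. Inserting these yields
\[
\pi^{\sharp_v}_{A_0,\epsilon}(y) \;=\; (-\epsilon^{2}\,D_{\hat x}^{-1})\,\bigl(-\,P_v T_x D_x A_0 D_x T_x^{t} P_v^{t}\bigr)\,(-\epsilon^{2}\,D_{\hat x}^{-1}),
\]
evaluated at the preimage $\hat{x} = (\Psi_{v,\epsilon}\circ\psi_v)^{-1}(y)$.

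Next I would regroup factors to recognize $\mathbb{J}$. Pulling one copy of $-\epsilon^{2} D_{\hat x}^{-1}$ onto the left of $A_0$ and the other onto its right, and using the definition
\[
\mathbb{J}(\hat{x}) \;=\; -\epsilon^{2}\, D_{\hat x}^{-1}\, P_v\, T_x\, D_x
\]
from~\eqref{notation-sec-5-1}, together with its transpose $\mathbb{J}(\hat{x})^{t} = -\epsilon^{2}\, D_x\, T_x^{t}\, P_v^{t}\, D_{\hat x}^{-1}$ (again using that $D_{\hat x}^{-1}$ is symmetric), the product collapses to $-\,\mathbb{J}(\hat{x})\, A_0\, \mathbb{J}(\hat{x})^{t}$. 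A careful sign count confirms the overall minus: two factors of $-\epsilon^{2}$ from the Jacobians and one from $\pi^{\sharp_v}_{A_0}$ combine to $-\epsilon^{4}$, while the two $-\epsilon^{2}$ factors absorbed into $\mathbb{J}$ and $\mathbb{J}^{t}$ account for $+\epsilon^{4}$, leaving the single overall minus sign stated in~\eqref{pi-hat}.

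There is no real obstacle here beyond careful bookkeeping of signs and the observation that $D_{\hat x}^{-1}$ is symmetric so that its transpose reproduces the $\mathbb{J}^{t}$ factor cleanly. The only conceptual input is the push-forward identity~\eqref{eq:Poissoncondition2}; the fact that the resulting bivector genuinely defines a Poisson structure on $\inter(\Pi_v)$ is automatic because $\pi_{A_0}$ is Poisson on $\inter(\Gamma_\nund)$ (by~\cite{AD2015}*{Theorem 3.5}) and Poisson structures are preserved under push-forward by diffeomorphisms (Remark~\ref{remark:push-pull-Poisson}).
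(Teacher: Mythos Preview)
Your proof is correct and follows exactly the approach indicated in the paper, which simply refers the reader to Definition~\ref{def:Poissonmap} and Remark~\ref{remark:push-pull-Poisson}. You have merely spelled out the substitution and sign bookkeeping that the paper leaves implicit.
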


\begin{proof}
See Definition~\ref{def:Poissonmap} and Remark~\ref{remark:push-pull-Poisson}. 
\end{proof}

%If all the faces $\sigma\in F_v$ have order one, 
The Poisson structure $\pi^{\sharp_v}_{A_0,\epsilon}$ is asymptotically equivalent to a linear Poisson structure. 
Let   
\begin{equation}\label{eq:Ev}
E_v=\mathrm{diag}(E^1_v,\ldots,E^p_v),
\end{equation}
 be the  $(n-p)\times n$ matrix  defined by diagonal
 blocks $E_v^\alpha$, for $\alpha=1,..,p$, where the $\alpha^{\rm th}$ block is  the  $(n_\alpha-1)\times n_\alpha$ matrix  in which  the column $j_\alpha$ is equal to $\mathbbm{1}_{n_\alpha-1}$ and every other column  $k_\alpha\neq j_\alpha$ is equal to $-e_{k_\alpha}\in\Rr^{n_\alpha-1}$.

\begin{lemma}\label{rescale:Poisson-from}
Given a vertex $v=(e_{j_1},...,e_{j_p})$, if $E_v$ is the matrix in~\eqref{eq:Ev} and $B_v:=E_vA_0E^t_v$, then  
\[\lim_{\epsilon\to 0^+} \frac{-1}{\epsilon^2}\mathbb{J}\circ (\Psi_{v,\epsilon}\circ\psi_v)^{-1}(y)=E_v,\]
over $\inter(\Pi_v)$ with convergence in $C^\infty$ topology. 
Consequently, 
\[\lim_{\epsilon\to 0^+} \frac{1}{\epsilon^4}\pi^{\sharp_v}_{A_0,\epsilon}(y)=B_v,\]
 over $\inter(\Pi_v)$ with convergence in $C^\infty$ topology. 
\end{lemma}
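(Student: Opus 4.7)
The plan is to reduce the problem to a block-diagonal calculation and then evaluate the limit entry by entry. Since $\mathbb{J}(\hat x)=\diag(\mathbb{J}_1(\hat x^1),\ldots,\mathbb{J}_p(\hat x^p))$ and $E_v=\diag(E_v^1,\ldots,E_v^p)$, it suffices to show, for each fixed $\alpha$, that
\[
\lim_{\epsilon\to 0^+}\frac{-1}{\epsilon^2}\,\mathbb{J}_\alpha\circ(\Psi_{v,\epsilon}\circ\psi_v)^{-1}(y)=E_v^\alpha
\]
in $C^\infty(\inter(\Pi_v))$.

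The first step is to simplify $-\epsilon^{-2}\mathbb{J}_\alpha$ algebraically. Substituting $T_x^\alpha=x^\alpha\unit^t-I$ into \eqref{notation-sec-5-2} gives $T_x^\alpha D_{x^\alpha}=x^\alpha (x^\alpha)^t-D_{x^\alpha}$, and hence
\[
-\epsilon^{-2}\mathbb{J}_\alpha(\hat x^\alpha)
=D_{\hat x_\alpha}^{-1}P_v^\alpha\bigl(x^\alpha(x^\alpha)^t-D_{x^\alpha}\bigr)
=D_{\hat x_\alpha}^{-1}\hat x^\alpha\,(x^\alpha)^t-D_{\hat x_\alpha}^{-1}P_v^\alpha D_{x^\alpha}.
\]
The first term collapses to $\unit_{n_\alpha-1}(x^\alpha)^t$, while a direct entry check shows that the second term equals $P_v^\alpha$ (the $\hat x_\alpha$ factors cancel on the non-$j_\alpha$ columns, and the $j_\alpha$ column of $P_v^\alpha D_{x^\alpha}$ is already zero). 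So the whole expression reduces to the affine formula $\unit_{n_\alpha-1}(x^\alpha)^t-P_v^\alpha$, which depends \emph{linearly} on the coordinates $\hat x^\alpha$.

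The second step is the passage to the limit. Pulling back by $(\Psi_{v,\epsilon}\circ\psi_v)^{-1}$ sends $y$ to $\hat x_k^\alpha=e^{-y_k^\alpha/\epsilon^2}$ and $x^\alpha_{j_\alpha}=1-\sum_{k\neq j_\alpha}e^{-y_k^\alpha/\epsilon^2}$. On any compact subset of $\inter(\Pi_v)$ one has $y_k^\alpha\ge c>0$ for all $k\neq j_\alpha$, so all these exponentials, together with their derivatives of any order in $y$ (which carry only polynomial factors in $\epsilon^{-2}$), tend uniformly to zero. Consequently $(x^\alpha)^t\to e_{j_\alpha}^t$ uniformly in $C^\infty$, and the displayed affine expression tends to $\unit_{n_\alpha-1}e_{j_\alpha}^t-P_v^\alpha$. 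One then checks columnwise: the $j_\alpha$-th column is $\unit_{n_\alpha-1}$ and the $k$-th column ($k\neq j_\alpha$) is $-e_k$, which is exactly $E_v^\alpha$ from \eqref{eq:Ev}.

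For the consequence, the identity \eqref{pi-hat} gives
\[
\frac{1}{\epsilon^4}\pi^{\sharp_v}_{A_0,\epsilon}(y)=-\Bigl(\tfrac{1}{\epsilon^2}\mathbb{J}\Bigr)A_0\Bigl(\tfrac{1}{\epsilon^2}\mathbb{J}\Bigr)^{\!t}\circ(\Psi_{v,\epsilon}\circ\psi_v)^{-1}(y),
\]
and the matrix multiplication is a continuous bilinear operation on $C^\infty$, so the established first limit propagates to give $E_vA_0E_v^t=B_v$. The only delicate point in the whole argument is the verification that the derivatives of $e^{-y/\epsilon^2}$ with respect to $y$ still vanish uniformly on compact sets as $\epsilon\to 0^+$, but this is precisely the mechanism already exploited in \cite[Lemma 5.6]{ADP2020}, and the remainder of the calculation is linear-algebraic bookkeeping.
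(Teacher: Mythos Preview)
Your proof is correct and follows essentially the same approach as the paper: both compute $-\epsilon^{-2}\mathbb{J}_\alpha$ explicitly and then let each $x_k^\alpha=e^{-y_k^\alpha/\epsilon^2}\to 0$ (and $x_{j_\alpha}^\alpha\to 1$) on compacta of $\inter(\Pi_v)$. Your compact form $\unit_{n_\alpha-1}(x^\alpha)^t-P_v^\alpha$ is precisely the matrix the paper writes out entrywise, so the two arguments are the same computation in different notation.
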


\begin{proof}
A simple calculation shows that  for every $\alpha=1,..,p$ 
\begin{align*}
\frac{-1}{\epsilon^2}\mathbb{J}_\alpha= %D^\alpha_{x,\nu}
%\scalemath{.8}{
\begin{pmatrix}
(x_1^\alpha-1)&\ldots&x_{j_\alpha-1}^\alpha&x_{j_\alpha}^\alpha&x_{j_\alpha-1}^\alpha&\ldots&x_{n_\alpha}^\alpha \\
&&&\vdots&&\\
x_1^\alpha&\ldots&(x_{j_\alpha-1}^\alpha-1)&x_{j_\alpha}^\alpha&x_{j_\alpha+1}^\alpha&\ldots&x_{n_\alpha}^\alpha\\
x_1^\alpha,&\ldots&x_{j_\alpha-1}^\alpha&x_{j_\alpha}^\alpha&(x_{j_\alpha+1}^\alpha-1)&\ldots&x_{n_\alpha}^\alpha\\
&&&\vdots&&&\\
x_1^\alpha&\ldots,&x_{j_\alpha-1}^\alpha&x_{j_\alpha}^\alpha&x_{j_\alpha+1}^\alpha&\ldots&(x_{n_\alpha}^\alpha-1)
  \end{pmatrix} .
\end{align*}
%where 
%$$
%D^\alpha_{x,\nu}:=\diag((x^\alpha_1)^{1-\nu_{1_\alpha}},\ldots,(x^\alpha_{j_\alpha-1})^{1-\nu_{j_\alpha-1}},(x^\alpha_{j_\alpha+1})^{1-\nu_{j_\alpha+1}},\ldots,(x^\alpha_{n_\alpha})^{1-\nu_{n_\alpha}})
%$$
For every $\sigma\in F_v$ and  $k\in[\alpha]\setminus\{j_\alpha\} $ we have 
\[\lim_{\epsilon\to 0^+} x_{k}^\alpha\circ(\Psi_{v,\epsilon}\circ\psi_v)^{-1}(y)=\lim_{\epsilon\to 0^+} e^{-\frac{y^\alpha_{k}}{\epsilon^2}}=0.\]
Considering that $\displaystyle x_{j_\alpha}^\alpha=1-\sum_{k\in[\alpha]\setminus\{j_\alpha\}}x_{k}^\alpha$, we get the first claim of the lemma and the second claim is an immediate consequence.
\end{proof}

Figure~\ref{figure:poisson-on-dual-cone} illustrates the case $\Gamma=\Delta^2$.

\begin{center}
\tikzset{->-/.style={decoration={
  markings,
  mark=at position #1 with {\arrow{>}}},postaction={decorate}}}
  
  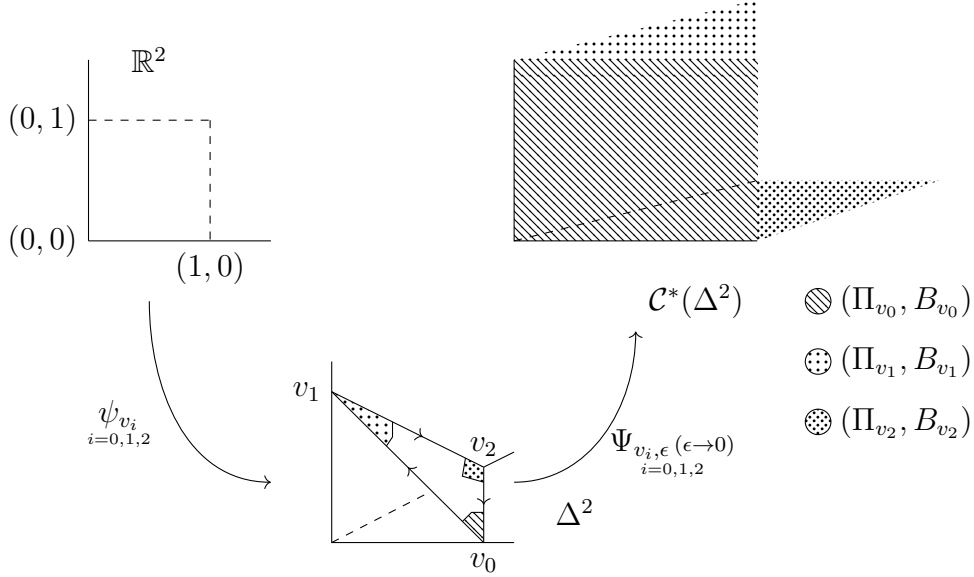
\begin{figure}
\begin{tikzpicture}[scale=.8]
\draw (0,0) node[below,left] {$(0,0)$}-- (3,0) ;\draw (0,0)--(0,3);\draw[dashed] (0,2)node[left]{$(0,1)$}--(2,2);\draw[dashed] (2,2)--(2,0) node[below]{$(1,0)$};
\node at (1,3) {$\mathbb{R}^2$};

%%%%%%%%%%%%%%

\draw (7,3)--(7,0)--(11,0);\draw[dashed] (7,0)--(11,1);
\fill[pattern=dots] (7,3)--(11,3)--(11,4)--(7,3);
\fill[pattern=crosshatch dots] (11,0)--(14,1)--(11,1)--(11,0);
\fill[pattern=north west lines] (7,0)--(7,3)--(11,3)--(11,0)--(7,0);
\node at (10,-1) {$\CC^\ast(\Delta^2)$};
\filldraw[pattern=north west lines] (12,-1) circle (6pt) node[right] {$\,\,(\Pi_{v_0},B_{v_0})$};
\filldraw[pattern=dots] (12,-2) circle (6pt) node[right] {$\,\,(\Pi_{v_1},B_{v_1})$};
\filldraw[pattern=crosshatch dots] (12,-3) circle (6pt) node[right] {$\,\,(\Pi_{v_2},B_{v_2})$};

%%%%%%%%%%%%%%%%%%%%%%%%%%

\draw (4,-2)--(4,-5)--(7,-5);\node at (8,-4.5) {$\Delta^2$};
\draw[dashed] (4,-5)--(5.6,-4.17);\draw (6.5,-3.75)--(7,-3.5);
\draw[->-=.5] (6.5,-5) node[below] {$v_0$} -- (4,-2.5) node[left] {$v_1$};\draw[->-=.5] (6.5,-3.75)--(6.5,-5);\draw[->-=.6] (4,-2.5)--(6.5,-3.75) node[above] {$v_2$};

\draw (5, -3)--(5,-3.25)--(4.9,-3.4);
\fill[pattern=dots] (4,-2.5)--(5, -3)--(5,-3.25)--(4.9,-3.4)--(4,-2.5);

\draw (6.15,-4.65)--(6.3,-4.5)--(6.5,-4.5);
\fill[pattern=north west lines] (6.5,-5)--(6.15,-4.65)--(6.3,-4.5)--(6.5,-4.5)--(6.5,-5);

\draw (6.2,-3.6)--(6.15,-3.9)--(6.5,-4);
\fill[pattern=crosshatch dots] (6.5,-3.75)--(6.2,-3.6)--(6.15,-3.9)--(6.5,-4)--(6.5,-3.75);

%%%%%%%%%%%%%%%%%%%%%%%%%

\draw[->] (1,-1) to[out=270,in=180] (3,-4);
\draw[->] (7,-4) to[out=0,in=270] (9,-1.5);
\node at (.5,-3) {$\overset{\displaystyle\psi_{v_i}}{\scriptscriptstyle i=0,1,2}$};
\node at (9.6,-3.5)  {$\overset{{\displaystyle\Psi_{v_i,\epsilon}}\,(\epsilon\to 0)}{\scriptscriptstyle i=0,1,2}$};
\end{tikzpicture}
\caption{\footnotesize{Poisson structures on the dual cone.}}
\label{figure:poisson-on-dual-cone}
\end{figure}
\end{center}

\begin{remark}  
The same linear Poisson structure $B_v:=E_vA_0E^t_v$ appears in~\cite[Theorem $3.5$]{AD2015}.
\end{remark}

\begin{lemma}\label{lemma:assymp-Hamil}
For a given vertex $v=(e_{j_1},...,e_{j_p})$, let $\chi^v$ be the skeleton character of $X_{\nund,A}$, as in Definition~\ref{skeletoncharacter}.
Then \[\chi^v=B_v d\eta_v, \]
where $\eta_v$ is the restriction of the function $\eta$ (defined in~\eqref{eq:eta}) to $\inter(\Pi_v)$. In other words,
$\chi^v$ restricted to $\inter(\Pi_v)$ is Hamiltonian with respect to the constant Poisson structure $B_v$ having $\eta_v$ as a Hamiltonian function.
\end{lemma}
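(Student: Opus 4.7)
The plan is to transfer the Hamiltonian identity $X_{\nund,A}=\pi_{A_0}^{\sharp}\,dh$ from $\inter(\Gamma_\nund)$ to the sector $\inter(\Pi_v)$ through the rescaled coordinate chart $\Psi_{v,\epsilon}\circ\psi_v$, and then pass to the limit $\epsilon\to 0^+$ using the three asymptotic ingredients already established in this section.

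First, I would observe that $\Psi_{v,\epsilon}\circ\psi_v$ is a Poisson diffeomorphism from $(P_v(\inter(N_v)),\pi^{\sharp_v}_{A_0})$ onto $(\inter(\Pi_v),\pi^{\sharp_v}_{A_0,\epsilon})$ by the very definition~\eqref{pi-hat} of $\pi^{\sharp_v}_{A_0,\epsilon}$ (see also Definition~\ref{def:Poissonmap} and Remark~\ref{remark:push-pull-Poisson}). Since Poisson diffeomorphisms intertwine Hamiltonian vector fields, this diffeomorphism carries the Hamiltonian vector field of $h$ on $\inter(\Gamma_\nund)$ to the Hamiltonian vector field of $h\circ(\Psi_{v,\epsilon}\circ\psi_v)^{-1}$ on $\inter(\Pi_v)$. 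Combining this with the scaling identity $(\Psi_{v,\epsilon})_\ast X_{\nund,A}=\epsilon^2\,\tilde X_v^\epsilon$ from Lemma~\ref{lemma:rescal}, I obtain on $\inter(\Pi_v)$ the identity
\[
\epsilon^2\,\tilde X_v^\epsilon(y)\;=\;\pi^{\sharp_v}_{A_0,\epsilon}(y)\,d_y\!\left(h\circ(\Psi_{v,\epsilon})^{-1}\right).
\]

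Next, I would regroup the scaling factors as
\[
\tilde X_v^\epsilon(y)\;=\;\Bigl(\tfrac{1}{\epsilon^4}\,\pi^{\sharp_v}_{A_0,\epsilon}(y)\Bigr)\cdot\Bigl(\epsilon^2\,d_y(h\circ(\Psi_{v,\epsilon})^{-1})\Bigr),
\]
and pass to $\epsilon\to 0^+$ on any compact subset of $\inter(\Pi_v)$. The three convergences needed are already in hand: the left-hand side tends to $\chi^v$ by Lemma~\ref{lemma:rescal}, the first factor on the right tends to the constant matrix $B_v$ by Lemma~\ref{rescale:Poisson-from}, and the second factor tends to $d\eta_v$ by item~(2) of Proposition~\ref{asymptotic-constants}. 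This yields $\chi^v=B_v\,d\eta_v$, which is the asserted Hamiltonian character of the skeleton vector field on $\inter(\Pi_v)$ with constant Poisson structure $B_v$ and linear Hamiltonian $\eta_v$.

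No step is genuinely difficult here; the only subtle point is the first one, namely the functoriality of the Hamiltonian identity under $\Psi_{v,\epsilon}\circ\psi_v$. This is however essentially automatic, since $\pi^{\sharp_v}_{A_0,\epsilon}$ is \emph{defined} as the push-forward of $\pi_{A_0}$ by that same diffeomorphism, and Poisson maps preserve the relation between a Hamiltonian and its associated vector field via equation~\eqref{eq:Poissoncondition2}.
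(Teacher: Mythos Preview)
Your proof is correct and follows essentially the same approach as the paper: both establish the finite-$\epsilon$ identity $\tilde X_v^\epsilon=\bigl(\epsilon^{-4}\pi^{\sharp_v}_{A_0,\epsilon}\bigr)\bigl(\epsilon^{2}\,d(h\circ(\Psi_{v,\epsilon})^{-1})\bigr)$ and then pass to the limit via Lemma~\ref{lemma:rescal}, Lemma~\ref{rescale:Poisson-from}, and Proposition~\ref{asymptotic-constants}. The only difference is cosmetic: the paper derives that identity by an explicit chain-rule computation through the local chart $\psi_v$, whereas you obtain it in one line by invoking the functoriality of Hamiltonian vector fields under the Poisson diffeomorphism $\Psi_{v,\epsilon}\circ\psi_v$.
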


\begin{proof}
We use the notation $X^v_{(\nund,A_0)}(\hat{x}):=(d_xP_v)X_{\nund,A}(x)$ for the local expression of the replicator vector field $X_{\nund,A}$ in the local chart $\psi_v$. 
If we write the function $h(x)$, defined in~\eqref{eq:hamiltonian-101}, as $h(x)=h(\psi_v\circ P_v(x))$ then
\[d_xh=(P_v)^td_{\hat{x}}(h\circ\psi_v)(\hat{x}).\]
Notice that $dP_v=P_v$.  By Theorem~\ref{conservative:hamiltonian}, $X_{\nund,A_0}=\pi_{A_0}d h$. Locally, 
\begin{align*}
X^v_{(\nund,A_0)}(\hat{x})=P_vX_{\nund,A}(x)=P_v\pi_{A_0}P^t_vd_{\hat{x}}(h\circ \psi_v).
\end{align*}
Similarly, writing $h\circ\psi_v(\hat{x})=h\circ\psi_v\circ(\Psi_{v,\epsilon}\circ\psi_v)^{-1}\circ(\Psi_{v,\epsilon}\circ\psi_v)(\hat{x})$ we have
\[d_{\hat{x}}(h\circ\psi_v)=(d_{\hat{x}}(\Psi_{v,\epsilon}\circ\psi_n))^t d_y(h\circ(\Psi_{v,\epsilon})^{-1}).\]
The vector field $\tilde{X}_v^\epsilon$ defined in Lemma~\ref{lemma:rescal} is
\begin{align*}
\tilde{X}_v^\epsilon&=\frac{1}{\epsilon^2}(d_x(\Psi_{v,\epsilon})X_{\nund,A})
=\frac{1}{\epsilon^2}(d_{\hat{x}}(\Psi_{v,\epsilon}\circ\psi_v)X^v_{(\nund,A_0)})\\
&=\frac{1}{\epsilon^2}(d_{\hat{x}}(\Psi_{v,\epsilon}\circ\psi_v)P_v\pi_{A_0}P^t_vd_{\hat{x}}(\Psi_{v,\epsilon}\circ\psi_n))^t d_y(h\circ(\Psi_{v,\epsilon})^{-1})\\
&=\frac{1}{\epsilon^4}\pi^{\sharp_v}_{A_0,\epsilon} \boldsymbol{\Big (}\epsilon^2((\Psi_{v,\epsilon})^{-1})^\ast d_x h \boldsymbol{\Big)},
\end{align*}
where in the second equality we use $\psi_v\circ P_v=\mathrm{Id}$. 
Then, applying Lemma~\ref{lemma:rescal}, Lemma~\ref{rescale:Poisson-from}, and Proposition~\ref{asymptotic-constants}, the result follows. 
Notice that $\Pi_v(\epsilon^r)\subset\inter(\Pi_v)$. 
 \end{proof}

Our aim is to show that for a given  heteroclinic path  $\xi =(\gamma_0,\gamma_1,\ldots, \gamma_m)$, the skeleton flow map of $\chi$ along  $\xi$ (see Definition~\ref{poincaremap}),
$$\pi_\xi:=L_{\gamma_{m-1},\gamma_m}\circ\ldots\circ L_{\gamma_0,\gamma_1}\; ,$$ 
restricted to the level set of $\eta$, is a Poisson map.
Notice that the Poisson structure $B_v$ is only defined in $\inter(\Pi_v)$ and neither $\Pi_\gamma$ nor $\Pi_{\gamma^\prime}$ are submanifolds of $\inter(\Pi_v)$.  So we need to define Poisson structures on the sections $\Pi_{\gamma_i,\gamma_{i+1}}$ for all $i=0,...,m$.  

For the heteroclinic path
\begin{equation}\label{eq:path0}
\xi: v_0\buildrel\gamma_0 \over\longrightarrow  v_1 \buildrel\gamma_1 \over\longrightarrow  v_2 \longrightarrow  \ldots \longrightarrow  v_m \buildrel\gamma_m \over\longrightarrow v_{m+1}, 
\end{equation}
we store in the $i^{th}$ column of the matrix  
$$J_\xi=\begin{bmatrix}
j_{01}&j_{11}&\ldots&j_{(m+1)1}\\
j_{02}&j_{12}&\ldots&j_{(m+1)2}\\
\vdots&\vdots&\vdots&\vdots\\
j_{0p}&j_{1p}&\ldots&j_{(m+1)p}\\
\end{bmatrix},$$
 the indices of the non zero components of the vertex $v_i=(e_{j_{i1}},e_{j_{i2}},\ldots,e_{j_{ip}})$.  
%We start with a vertex ${v_1}=(e_{j_1},...,e_{j_p})$ with an incoming flowing-edge $v_0\buildrel\gamma \over\longrightarrow {v_1}$ and an outgoing flowing-edge  ${v_1} \buildrel\gamma^\prime\over\longrightarrow v_2$. 
By construction of $\Gamma_\nund$,  there exists $(\xi_0,\xi_1,\ldots,\xi_m)\in\{1,2,\ldots,p\}^{m+1}$
%, $\hat{k}_0\in[\alpha_0]\setminus\{j_{\alpha_0}\} $ and $\hat{k}_2\in[\alpha_2]\setminus\{j_{\alpha_2}\} $ 
such that $j_{(i-1)l}=j_{il}$ for $l\ne \xi_{i-1}$ and $j_{(i-1)\xi_{i-1}}\ne j_{i\xi_{i-1}}$, i.e. $\xi_{i-1}$ is the group containing the nonzero component that differ between the end points of the edge $\gamma_{i-1}$.  In order to simplify notations for every vertex $v_i$ in $\xi$ we denote $$r_i={j_{(i-1)\xi_{(i-1)}}}\quad\mbox{and}\quad s_i=j_{(i+1)\xi_i}.$$
First we consider the vertex $v_i$ with incoming and outgoing edges
\begin{align*}
&\gamma_{i-1}: v_{i-1}+t(0,\ldots,e_{s_{i-1}}-e_{r_i},\ldots,0),\\
&\gamma_i: v_i+t(0,\ldots,e_{j_{s_i}}-e_{r_i},\ldots,0),
\end{align*}
respectively, i.e. $\buildrel\gamma_{i-1} \over\longrightarrow  v_i \buildrel\gamma_{i} \over\longrightarrow$, where $t\in [0,1]$. 
 Notice that 
 \begin{align*}
%\scalemath{.85}{
&\Pi_{v_{(i-1)}}=\{y\in\mathbb{R}^n_+\,|\,y_{j_{(i-1)l}}=0, \; l=1,\ldots,p\},\\
&\Pi_{v_i}=\{y\in\mathbb{R}^n_+\,|\,y_{j_{il}}=0,\; l=1,\ldots,p\}.
\end{align*}
Since $j_{(i-1)l}=j_{il}$ for $l\ne \xi_{i-1}$ and $\Pi_{\gamma_{i-1}}=\Pi_{v_{(i-1)}}\cap \Pi_{v_i}$ we have
\[
\Pi_{\gamma_{i-1}}=\{y\in\mathbb{R}^n_+\,|\,y_{j_{s_{i-1}}}=y_{j_{(i-1)l}}=0, \; l=1,\ldots,p\}.
\]

 The opposite facet to $\gamma_i$ at ${v_i}$ is then $\sigma_\ast:=\{y_{s_i}=0\}$ where we omitted the superscript $\xi_i$ from $y$ since it is evident that $j_{s_i}\in\xi_i$. We keep omitting the superscript  whenever there is no ambiguity. The sector defined in~\eqref{eq:hyperplane} is

\begin{small}
\begin{equation}\label{eq:hyperplane-replicator}
\Pi_{\gamma_{i-1},\gamma_i} :=\left\{\, y\in {\rm int}(\Pi_{\gamma_{i-1}})\,\colon\,  y_\sigma-\frac{\chi^{v_i}_\sigma}{\chi^{v_i}_{s_i}}\, y_{s_i} > 0, \, \forall  \sigma\ne j_{i1},\ldots,j_{i(m+1)}, s_i  \; \right\}.
%\Pi_{\gamma_{i-1},\gamma_i} :=\left\{\, y\in {\rm int}(\Pi_{\gamma_{i-1}})\,\colon\,
%  y^\alpha_{\hat{i}_\alpha}-\frac{(\chi^{{v_1}})^\alpha_{\hat{i}_\xi}}{(\chi^{{v_1}})_{j_{(i+1)\xi_i}}} \, y_{j_{(i+1)\alpha_i}}\,> 0, \, \forall \hat{i}_\alpha\ne j_{(i+1)\alpha_i}  \; \right\} .
\end{equation}
\end{small}

The skeleton flow map of $\chi$ at vertex ${v_i}$ is the linear map ${L_{\gamma_{i-1},\gamma_i}:\Pi_{\gamma_{i-1},\gamma_i}\to\Pi_{\gamma_i}}$ defined by
\begin{equation}\label{eq:L:gamma:gammaprime-replicator}
L_{\gamma_{i-1},\gamma_i}(y):=  \left(\, y_\sigma-\frac{\chi^{{v_i}}_\sigma}{\chi^{v_i}_{s_i}} \, y_{s_i}\, \right)_{\sigma}\; ,
\end{equation}
Notice that $L_{\gamma_{i-1},\gamma_i}(y)=\phi_{\chi^{v_i}}(\tau(y),y)$ where $\phi_{\chi^{v_i}}(\tau,y)=y+\tau \chi^{v_i},$ is the flow of the skeleton vector field $\chi^{v_i}$ and  $\tau(y):=-\frac{y_{s_i}}{\chi^{v_i}_{s_i}}$.  We denote $L^t_{\gamma_{i-1},\gamma_i}(y):=\phi_{\chi^{v_i}}(t\tau(y),y)$ where $t\in (0,1)$. More precisely
\[L^t_{\gamma_{i-1},\gamma_i}(y):=  \left(\, y_\sigma-t\frac{\chi^{{v_i}}_\sigma}{\chi^{v_i}_{s_i}} \, y_{s_i}\, \right)_{\sigma}\; .\]

 \begin{defn} \label{defn:tubular-neighborhood}
 We define by 
\begin{equation}
T_{\gamma_{i-1},\gamma_i}:=\bigcup\limits_{0<t<1} L_{\gamma_{i-1},\gamma_i}^t (\Pi_{\gamma_{i-1},\gamma_i}),
\end{equation}
the convex cone  containing the line segments of the flow of $\chi^{v_i}$ connecting the points in the domain of  $L_{\gamma_{i-1},\gamma_i}$  to their images. 
\end{defn}

We consider two cosymplectic foliations interior to each sector $\Pi_{v_i}$ in order to use the techniques introduced in Section~\ref{sec:poisson-poincare}. In the following lemma, we describe the Poisson structures on $\Pi_{\gamma_{i-1},\gamma_i}$ and $L_{\gamma_{i-1},\gamma_i}(\Pi_{\gamma_{i-1},\gamma_i})$.

\begin{lemma}\label{cosymplectic-foliations}
With the notation adopted in Lemma~\ref{lemma:assymp-Hamil}, let $\eta_{v_i}$ be the restriction of function $\eta$, defined in~\eqref{eq:eta},  to $\inter(\Pi_{v_i})$. Consider two functions  $G^{v_i}_r,\,G^{v_i}_s:T_{\gamma,\gamma^\prime}\to\mathbb{R}$ defined by
$G^{v_i}_r(y)=y_{r_i}$ and $G^{v_i}_s(y)=y_{s_i}$  then:
\begin{itemize}
\item[1)] Level sets of $(\eta_{v_i},G^{v_i}_r), (\eta_{v_i},G^{v_i}_s):T_{\gamma_{i-1},\gamma_i}\to\mathbb{R}^2$ partition  $T_{\gamma_{i-1},\gamma_i}$ into a  cosymplectic foliation $\mathcal{F}^{v_i}_r$ and  $\mathcal{F}^{v_i}_s$,  i.e. every leaf of these foliations is a cosymplectic submanifold of $(T_{\gamma_{i-1},\gamma_i},B_{v_i})$. Furthermore, every leaf $\Sigma$ of these foliations is a level transversal section to $\chi^{v_i}$ at every point $x\in\Sigma$;
\item[2)] Given two leafs\footnote{Notice that the flow of $\chi_{v_1}=X_{\eta_{v_1}}$ preserves $\eta_{v_1}$.} $\Sigma^l_r=(\eta_{v_i},G^{v_i}_r)^{-1}(c,d_l),\,l=1,2$, of $\mathcal{F}^{v_i}_r$ and  two leafs $\Sigma^l_s=(\eta_{v_i},G^{v_i}_s)^{-1}(c,d^\prime_l),\,l=1,2$, of $\mathcal{F}^{v_i}_s$, then the Poincar\'e map between any pair of these four leafs is a Poisson map.  
\end{itemize}
\end{lemma}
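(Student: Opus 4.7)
The plan is to recognize the setup of Section~\ref{sec:poisson-poincare} and then invoke Proposition~\ref{pois-poin-map}. By Lemma~\ref{lemma:assymp-Hamil}, $\chi^{v_i}$ is the Hamiltonian vector field $X_{\eta_{v_i}}$ on the Poisson manifold $(\inter(\Pi_{v_i}), B_{v_i})$, where $B_{v_i}$ is the constant linear Poisson structure of Lemma~\ref{rescale:Poisson-from}. Since $T_{\gamma_{i-1},\gamma_i}\subset \inter(\Pi_{v_i})$ by Definition~\ref{defn:tubular-neighborhood}, everything takes place inside this Hamiltonian system, with the Hamiltonian being the linear function $\eta_{v_i}$.

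For part~(1), the key observation is that $B_{v_i}$ is constant and the three functions $\eta_{v_i}$, $G^{v_i}_r$, $G^{v_i}_s$ are linear, so all pairwise brackets are constants. Using $\{H,f\}=df(X_H)$ with $H=\eta_{v_i}$ and $X_H=\chi^{v_i}$, one computes
\begin{equation*}
\{\eta_{v_i},G^{v_i}_r\}=dG^{v_i}_r(\chi^{v_i})=\chi^{v_i}_{r_i},\quad \{\eta_{v_i},G^{v_i}_s\}=\chi^{v_i}_{s_i}.
\end{equation*}
Because $\sigma_{r_i}$ is the facet opposite to the incoming flowing-edge $\gamma_{i-1}$ at $v_i$ and $\sigma_{s_i}=\sigma_{\ast}$ is the facet opposite to the outgoing edge $\gamma_i$ at $v_i$, Remark~\ref{sign-character} gives $\chi^{v_i}_{r_i}>0$ and $\chi^{v_i}_{s_i}<0$. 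The corresponding $2\times 2$ antisymmetric matrices $\matbrackets{\{G_a,G_b\}}{ab}$ are therefore invertible, so by Definition~\ref{cosymplectic} every leaf of $\mathcal{F}^{v_i}_r$ and $\mathcal{F}^{v_i}_s$ is a cosymplectic submanifold. Transversality of $\chi^{v_i}$ to each leaf follows from the same nonvanishing, namely $\chi^{v_i}(G^{v_i}_r)=\chi^{v_i}_{r_i}\neq 0$ and $\chi^{v_i}(G^{v_i}_s)=\chi^{v_i}_{s_i}\neq 0$, while the leaves lie inside a level set of the Hamiltonian by construction. Thus each leaf is a level transversal section to $\chi^{v_i}$ in the sense of Section~\ref{sec:poisson-poincare}.

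For part~(2), any pair of the four leaves sits inside the common energy level $\eta_{v_i}^{-1}(c)$ and, by part~(1), each is a level transversal section for $X_{\eta_{v_i}}=\chi^{v_i}$; hence Proposition~\ref{pois-poin-map} applies and yields that the Poincar\'e map between them is a Poisson map with respect to the Dirac brackets induced from $B_{v_i}$. The only preliminary step required before invoking the proposition is to exhibit the Poincar\'e map as a genuine smooth diffeomorphism between open subsets of the two leaves, and this is where I expect the main (mild) technical obstacle to lie. It is resolved as follows: the flow of $\chi^{v_i}$ is the explicit affine motion $\phi_{\chi^{v_i}}(t,y)=y+t\chi^{v_i}$, so the first-return time from one leaf to the other is an affine function of $y$ with coefficient equal to one of the nonzero entries $\chi^{v_i}_{r_i}$ or $\chi^{v_i}_{s_i}$, and the connecting flow-segment lies in $T_{\gamma_{i-1},\gamma_i}$ by the very construction of this convex cone in Definition~\ref{defn:tubular-neighborhood}. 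Since $\chi^{v_i}_{r_i}$ and $\chi^{v_i}_{s_i}$ have opposite signs, the level sets of $G^{v_i}_r$ and of $G^{v_i}_s$ are crossed monotonically by the flow in opposite directions, making all four source-target combinations well-defined on appropriate open subsets. With this in hand, Proposition~\ref{pois-poin-map} completes the proof.
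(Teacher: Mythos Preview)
Your argument is essentially the paper's own proof: compute $\{\eta_{v_i},G^{v_i}_r\}=\chi^{v_i}_{r_i}$ and $\{\eta_{v_i},G^{v_i}_s\}=\chi^{v_i}_{s_i}$, observe these are nonzero because $\gamma_{i-1},\gamma_i$ are flowing edges, conclude the leaves are cosymplectic level transversal sections, and invoke Proposition~\ref{pois-poin-map} for part~(2). The paper additionally writes out the Poincar\'e maps explicitly (translations between leaves of the same foliation, and the formula $y\mapsto \phi_{\chi^{v_i}}((d'-y_{s_i})/\chi^{v_i}_{s_i},y)$ between the two foliations), noting that the translations are trivially Poisson for the constant structure $B_{v_i}$ and that only the cross-foliation case genuinely requires Proposition~\ref{pois-poin-map}; your uniform appeal to that proposition covers all cases just as well. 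One minor remark: your signs $\chi^{v_i}_{r_i}>0$, $\chi^{v_i}_{s_i}<0$ are the opposite of those asserted in the paper's proof, but the paper itself immediately adds that only nonvanishing is needed, so this discrepancy is immaterial.
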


\begin{proof}
%For $r=0,2$, 
Clearly, 
\[\{\eta_{v_i},G_r^{v_i}\}=X_{\eta_{v_i}}(d G_r^{v_i})=\chi^{v_i}(d y_{r_i})=\chi^{v_i}_{r_i}, \]
and similarly $\{\eta_{v_i},G_s^{v_i}\}=\chi^{v_i}_{s_i}$. 
As before, using the notation $\sigma_{j_{ir}}$ for the facet $\{y_{j_{ir}}=0\}$, we see that ${\gamma_{i-1}}$  is a flowing edge from the corner $(v_{i-1}, \sigma_{i\xi_{i-1}})$ to the corner  $(v_i, \sigma_{s_i})$. So $\chi^{v_i}_{s_i}>0$. In a similar way we have  $\chi^{v_i}_{r_i}<0$. What we actually need is both of them to be nonzero. Then, both $\{\eta_{v_i},G^{v_i}_s\}$ and $\{\eta_{v_i}, G^{v_i}_r\}$ are second class constraints and consequently, their level sets are cosymplectic submanifolds (see Definition~\ref{cosymplectic}). The fact that $\Sigma$ is a level transversal section is clear.  

The Poincar\'e map  between $\Sigma^1_s,\Sigma^2_s$   is the translation
\begin{equation}\label{translations} \displaystyle P(y)=\phi_{\chi^{v_i}}\left( \frac{d_2-d_1}{\chi^{v_i}_{s_i}},y\right)=\left(\frac{d_2-d_1}{\chi^{v_i}_{s_i}}\right)\,\chi_{v_i}+y,
\end{equation}
and a similar translation for  $\Sigma^1_r,\Sigma^2_r$.  Clearly, these translations are Poisson maps. 

The Poincar\'e map between two level sets $\Sigma_r^l$ and $\Sigma_s^{l^\prime}$ is
\begin{equation}\label{poincare-map-0-2}
\displaystyle P(y)=\phi_{\chi^{v_i}}\left( \frac{d^\prime_{l^\prime}-y_{s_i}}{\chi^{v_i}_{s_i}},y\right)
\end{equation}
By Proposition~\ref{pois-poin-map}  this map is a Poisson map as well.
\end{proof}
\begin{remark}
Note that $y_{s_i}$ is not constant on $\Sigma_r^l$, so the map \eqref{poincare-map-0-2} is not a fixed time map of the flow $\phi_{\chi^{v_i}}$. Therefore, being Poisson is not a direct consequence of the flow being Hamiltonian. Furthermore, proving that this map is Poisson by direct calculation is not straightforward. This makes the contents of Section~\ref{sec:poisson-poincare} inevitable.
\end{remark}

Since the Poincar\'e maps can be considered between level sets of the functions $G_r^{v_i}$ and $G_s^{v_i}$, we state the following definition.
 
\begin{defn}
Let $\mathcal{\tilde{F}}^{v_i}_r$ and $\mathcal{\tilde{F}}^{v_i}_s$ be the foliations constituted by the level sets of the functions $G^{v_i}_r$ and $G^{v_i}_s$, respectively.
\end{defn}
 Every leaf of $\mathcal{\tilde{F}}^{v_i}_\ast,\, \ast=r,s$ is equipped with a Poisson structure, $\pi^{v_i}_\ast,\,\ast=r,s$ which has $\eta_{v_i}$ as a Casimir, and the level sets of this Casimir are the leafs of the cosymplectic foliation $\mathcal{F}^{v_i}_\ast$. The leafs of $\mathcal{\tilde{F}}^{v_i}_\ast$ can be identified (as Poison manifolds) through translations of type~\eqref{translations}.

\begin{defn}\label{internal-level-sets}
By  $(\tilde{\Sigma}^{v_i}_\ast,\tilde{\pi}^{v_i}_\ast),\, \ast=r,s$ we denote a typical leaf of the Poisson foliation  $\mathcal{\tilde{F}}^{v_i}_\ast\;$.  
\end{defn}

Ignoring (for a moment) the fact that the function $G^{v_i}_r$ is only defined on $T_{\gamma_{i-1},\gamma_i}$, we may consider  $\Pi_{\gamma_{i-1},\gamma_i}$ as the zero level set of $G^{v_i}_r$. Hence a typical leaf $\tilde{\Sigma}^{v_i}_r$ is diffeompric to   $\Pi_{\gamma_{i-1},\gamma_i}$ through a translation of type \eqref{translations}. Through this, diffeomorphism  $\Pi_{\gamma_{i-1},\gamma_i}$ secures a Poisson structure. Similarly, $L_{\gamma_{i-1},\gamma_i}(\Pi_{\gamma_{i-1},\gamma_i})$ gains a Poisson structure from $(\tilde{\Sigma}^{v_i}_s,\tilde{\pi}^{v_i}_s)$.

\begin{proposition}\label{prop:poincare-is-Poisson-vertex}
Let $\Pi_{\gamma_{i-1},\gamma_i}$ be equipped with the Poisson structure induced from $(\tilde{\Sigma}^{v_i}_s,\tilde{\pi}^{v_i}_s)$ via a translation of type \eqref{translations}, and $L_{\gamma_{i-1},\gamma_i}(\Pi_{\gamma_{i-1},\gamma_i})$ with the one induced from $(\tilde{\Sigma}^{v_i}_r,\tilde{\pi}^{v_i}_r)$ in a similar way.
Then $L_{\gamma_{i-1},\gamma_i}$ is Poisson map (see Figure~\ref{figure:illustration}).
\end{proposition}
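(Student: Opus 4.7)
The plan is to realize $L_{\gamma_{i-1},\gamma_i}$ as a Poincar\'e map of the Hamiltonian flow generated by $\eta_{v_i}$ on the ambient Poisson manifold $(T_{\gamma_{i-1},\gamma_i},B_{v_i})$ and then to invoke Proposition~\ref{pois-poin-map}. By Lemma~\ref{lemma:assymp-Hamil}, the constant vector field $\chi^{v_i}$ coincides with the Hamiltonian vector field of $\eta_{v_i}$ with respect to $B_{v_i}$, so every time-$t$ map $\phi_{\chi^{v_i}}^t$ is a Poisson automorphism of $(T_{\gamma_{i-1},\gamma_i},B_{v_i})$. In particular, the identifying translations of type \eqref{translations} used to transport the abstract typical leaves $(\tilde{\Sigma}^{v_i}_\ast,\tilde{\pi}^{v_i}_\ast)$ to the concrete zero level sets $\{G^{v_i}_r=0\}$ and $\{G^{v_i}_s=0\}$ are Poisson diffeomorphisms; this guarantees that the Poisson structures thereby inherited by $\Pi_{\gamma_{i-1},\gamma_i}$ and $L_{\gamma_{i-1},\gamma_i}(\Pi_{\gamma_{i-1},\gamma_i})$ do not depend on the choice of identification.

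Next I would verify the Poincar\'e map interpretation. A direct substitution in the definition yields
\begin{equation*}
L_{\gamma_{i-1},\gamma_i}(y)=\phi_{\chi^{v_i}}\!\left(-\frac{y_{s_i}}{\chi^{v_i}_{s_i}},\,y\right),
\end{equation*}
and one checks that $\Pi_{\gamma_{i-1},\gamma_i}\subset\{G^{v_i}_r=0\}$ (since $r_i$ is one of the indices enforcing vanishing on $\Pi_{\gamma_{i-1}}$) while $L_{\gamma_{i-1},\gamma_i}(\Pi_{\gamma_{i-1},\gamma_i})\subset\{G^{v_i}_s=0\}$ by construction. Moreover, because $\eta_{v_i}$ is a first integral of $\phi_{\chi^{v_i}}$ (Proposition~\ref{asymptotic-constants}(3)) and, by construction, a Casimir of both $\tilde{\pi}^{v_i}_r$ and $\tilde{\pi}^{v_i}_s$, the map $L_{\gamma_{i-1},\gamma_i}$ carries Casimir levels to Casimir levels: for every admissible value $c$, it sends the cosymplectic leaf $\Sigma^c_r:=\{G^{v_i}_r=0,\eta_{v_i}=c\}$ of $\mathcal{F}^{v_i}_r$ onto $\Sigma^c_s:=\{G^{v_i}_s=0,\eta_{v_i}=c\}$ of $\mathcal{F}^{v_i}_s$.

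On each such pair, the restriction $L_{\gamma_{i-1},\gamma_i}\big|_{\Sigma^c_r}\colon \Sigma^c_r\to \Sigma^c_s$ is precisely the Hamiltonian Poincar\'e map between level transversal sections described in Lemma~\ref{cosymplectic-foliations}(2), and is therefore a Poisson map by Proposition~\ref{pois-poin-map}. The proposition then follows from the general fact that a smooth map between Poisson manifolds which preserves Casimirs and restricts to a Poisson map on every symplectic leaf is itself Poisson. The main obstacle I anticipate is the bookkeeping step identifying the symplectic leaves of $(\{G^{v_i}_r=0\},\tilde{\pi}^{v_i}_r)$ through a point of $\Pi_{\gamma_{i-1},\gamma_i}$ with the cosymplectic submanifolds $\Sigma^c_r$ endowed with their Dirac symplectic forms (and analogously on the target), so that the leafwise Poisson statement assembles cleanly into the global one being claimed; this compatibility is ultimately a consequence of the first step, since the translations defining the Poisson structures and the Hamiltonian flow realizing $L_{\gamma_{i-1},\gamma_i}$ are generated by one and the same vector field $\chi^{v_i}$.
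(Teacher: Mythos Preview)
Your approach is correct and rests on the same core ingredient as the paper---namely, Lemma~\ref{cosymplectic-foliations}(2) (and behind it Proposition~\ref{pois-poin-map})---but the paper's argument is organized more economically. Instead of viewing $L_{\gamma_{i-1},\gamma_i}$ directly as a Poincar\'e map between the boundary hypersurfaces $\{G^{v_i}_r=0\}$ and $\{G^{v_i}_s=0\}$ and then assembling the Poisson property leaf by leaf, the paper factors
\[
L_{\gamma_{i-1},\gamma_i}=P^{v_i}_3\circ P^{v_i}_2\circ P^{v_i}_1,
\]
where $P^{v_i}_1$ and $P^{v_i}_3$ are exactly the translations of type~\eqref{translations} that \emph{define} the Poisson structures on $\Pi_{\gamma_{i-1},\gamma_i}$ and on $L_{\gamma_{i-1},\gamma_i}(\Pi_{\gamma_{i-1},\gamma_i})$, so they are Poisson tautologically, and $P^{v_i}_2$ is the Poincar\'e map between interior typical leaves $\tilde\Sigma^{v_i}_r$ and $\tilde\Sigma^{v_i}_s$, which is Poisson by Lemma~\ref{cosymplectic-foliations}(2). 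This sidesteps two things you had to manage: the leafwise-to-global step (your ``general fact'' that a smooth map preserving Casimirs and Poisson on each symplectic leaf is globally Poisson---true here, but unnecessary), and the minor bookkeeping issue that the zero level sets $\{G^{v_i}_r=0\}$ and $\{G^{v_i}_s=0\}$ lie on the boundary of the open region $T_{\gamma_{i-1},\gamma_i}$ where Lemma~\ref{cosymplectic-foliations} is stated (harmless here because $B_{v_i}$ is constant and all constraints are linear, but the paper's route avoids even mentioning it). What your approach buys is a clearer geometric picture of $L_{\gamma_{i-1},\gamma_i}$ as a single Poincar\'e map; what the paper's buys is a two-line proof with no auxiliary assembly argument.
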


\begin{proof}
We decompose $L_{\gamma_{i-1},\gamma_i}$ into three maps $P^{v_i}_1$, $P^{v_i}_2$ and $P^{v_i}_3$,  where $P^{v_i}_1$ and $P^{v_i}_3$   are the translations used to define the Poisson structures on  $\Pi_{\gamma_{i-1},\gamma_i}$ and $L_{\gamma_{i-1},\gamma_i}(\Pi_{\gamma_{i-1},\gamma_i})$, respectively, and 
$P^{v_i}_2$ is the Poincar\'e map from  $(\tilde{\Sigma}^{v_i}_r,\tilde{\pi}^{v_i}_r)$ to $(\tilde{\Sigma}^{v_i}_s,\tilde{\pi}^{v_i}_s)$. By the construction of these two sections, together with 
 Lemma~\ref{cosymplectic-foliations}, $P^{v_i}_2$  is a Poisson map, which ends the proof.   
\end{proof}

\begin{center}  
  \begin{figure}[h]
\begin{tikzpicture}[scale=.8]
\tikzset{->-/.style={decoration={
  markings,
  mark=at position #1 with {\arrow{>}}},postaction={decorate}}}
\draw (7,3)--(7,0)--(11,-1);\draw[dashed] (7,0)--(4,-2);\draw[dashed] (7,0)--(3,-1);\fill[pattern=dots] (7,0)--(3,-1)--(4,-2)--(7,0);
\filldraw[pattern=dots] (1,-2) circle (6pt) node[below] {$\CC^\ast(\Gamma)\backslash \cup_{l=i-1}^{i+1} \Pi_{v_l}$};
\node at (7,-2) {$\Pi_{v_{i+1}}$};\node at (11,2) {$\Pi_{v_i}$}; \node at (4,2) {$\Pi_{v_{i-1}}$}; \node at (7,4) {$\Pi_{\gamma_{i-1}}$};\node at (12,-1.25) {$\Pi_{\gamma_i}$};
\node at (9,4) {$\tilde{\Sigma}^{v_i}_r$};\node at (5,4) {$\tilde{\Sigma}^{v_{i-1}}_s$};\node at (12,.25) {$\tilde{\Sigma}^{v_i}_s$};\node at (10,-2) {$\tilde{\Sigma}^{v_{i+1}}_r$};
\draw (7.5,2.5)--(7.5,1);\draw (6.5,2.5)--(6.5,1);\draw (8,.3)--(10,-.2);\draw (7.5,-.7)--(9.5,-1.2);

\draw[->] (7,3.7) to[out=270,in=180] (6.9,2.8);\draw[->] (9,3.7) to[out=270,in=0] (7.6,2.3);\draw[->] (5,3.7) to[out=270,in=180] (6.4,2.3);
\draw[->] (11.6,.25) to[out=180,in=90] (9.5,0);\draw[->] (9.6,-2) to[out=180,in=270] (9,-1.2);\draw[->] (11.7,-1.25) to[out=180,in=270] (10.7,-1.1);

\draw[->-=.5,->-=.1,->-=.9,dashed] (7,2.5) to (9.5,-.65);\node[right] at (8.25,1.1) {$\phi_{\chi^{v_i}}$};
\draw[->-=.3,->-=.9,dashed] (5,1) to (7,2.5);\node[right] at (5,1) {$\phi_{\chi^{v_{i-1}}}$};
\draw[->-=.2,->-=.7,dashed] (9.5,-.65) to (7.2,-1.6);\node at (8.2,-1.6) {$\phi_{\chi^{v_{i+1}}}$};

\end{tikzpicture}
\caption{\footnotesize{Illustration of Proposition~\ref{prop:poincare-is-Poisson-vertex}.}}
\label{figure:illustration}
\end{figure}
\end{center}

Notice that  $(\tilde{\Sigma}^{v_i}_\ast,\tilde{\pi}^{v_i}_\ast),\,\ast=r,s$ is a union of Poisson submanifolds equipped with Dirac bracket. We  describe now the matrix representative of this Dirac bracket.

\begin{lemma}\label{local-representation}
The matrix representative, in the coordinate system  $(y_{l})_{l\in [\alpha] \setminus \{j_{i\alpha}\}}$, of the Dirac bracket generated in $\inter(\Pi_{v_i})$ by the second class constrains $\eta_{v_i}$ and $G^{v_i}_r,$ is 
\begin{equation}\label{representing-matrix-r}
(\pi^{v_i}_r)^\sharp=B_{v_i}-C_{(v_i,r)}\;,
\end{equation}
where $C_{(v_i,r)}=[C^{\alpha,\beta}_{(v_i,r)}]_{\alpha,\beta}$ with 
$C^{\alpha,\beta}_{v_i,r}=[c_{lf}(\alpha,\beta,v_1,r)]_{( l , f)\in \{[\alpha]\setminus \{j_{i\alpha}\}\}\times \{[\beta]\setminus \{j_{i\beta}\}\} }$
and
\[c_{lf}(\alpha,\beta,v_i,r)=\frac{1}{\chi^{v_i}_{r_i}}\left((\chi^{v_i})^\alpha_{l}\, b^{\xi_{i-1},\beta}_{r_if}+b^{\alpha,\xi_{i-1}}_{lr_i}\,(\chi^{v_i})^\beta_{f}\right).\]
In the matrix $(\pi^{v_i}_r)^\sharp$ the $r_i^{\rm th}$ line and column are null. Removing these line and column one obtains the matrix representative, in the coordinate system  obtained by removing $y_{r_i}$ from  $(y_{l})_{l\in [\alpha] \setminus \{j_{i\alpha}\}}$, of the Poisson structure $\tilde{\pi}^{v_i}_r$ on $\tilde{\Sigma}^{v_i}_r$. Similarly, for the second class constrains $\eta_{v_i}$ and $G^{v_i}_s$ we have 
\begin{equation}\label{representing-matrix-s}
(\pi^{v_i}_s)^\sharp=B_{v_i}-C_{(v_i,s)}\;,
\end{equation}
where 
\[c_{lf}(\alpha,\beta,v_i,s)=\frac{1}{\chi^{v_i}_{s_i}}\left((\chi^{v_i})^\alpha_{l}\, b^{\xi_i,\beta}_{s_if}+b^{\alpha,\xi_i}_{ls_i}\,(\chi^{v_i})^\beta_{f}\right),\]
and removing the $s_i^{\rm th}$ line and column yields the matrix representative, in the coordinate system  obtained by removing $y_{s_i}$ from  $(y_l)_{l\in [\alpha] \setminus \{j_{i\alpha}\}}$, of the Poisson structure $\tilde{\pi}^{v_i}_s$ on $\tilde{\Sigma}^{v_i}_s$ . 
\end{lemma}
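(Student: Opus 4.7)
The plan is to apply Definition~\ref{dirac-bracket} in coordinates, taking the pair of second class constraints to be $\eta_{v_i}$ and $G^{v_i}_r$ (and later $G^{v_i}_s$), with the ambient Poisson bracket generated by $B_{v_i}$ on $\inter(\Pi_{v_i})$. Three algebraic ingredients feed into the calculation: the identity $\chi^{v_i}=B_{v_i}\,d\eta_{v_i}$ from Lemma~\ref{lemma:assymp-Hamil}, which identifies $X_{\eta_{v_i}}$; the observation that $G^{v_i}_r = y_{r_i}$ is a coordinate function, so that $\{G^{v_i}_r,y^\beta_f\}_{B_{v_i}} = b^{\xi_{i-1},\beta}_{r_i,f}$ is literally an entry of $B_{v_i}$; and the non-vanishing scalar $\{\eta_{v_i},G^{v_i}_r\} = \chi^{v_i}_{r_i}$ from Lemma~\ref{cosymplectic-foliations}, which both guarantees invertibility of the $2\times 2$ constraint matrix and gives its inverse $(1/\chi^{v_i}_{r_i})\begin{pmatrix}0 & -1\\ 1 & 0\end{pmatrix}$.

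Next I would substitute $f = y^\alpha_l$ and $g = y^\beta_f$ into~\eqref{eq:dirac-bracket}. The first term yields $b^{\alpha,\beta}_{l,f}$, the $(\alpha,l),(\beta,f)$-entry of $B_{v_i}$. For the correction term the four constituent brackets reduce to $\pm(\chi^{v_i})^\alpha_l$, $\pm(\chi^{v_i})^\beta_f$, $b^{\alpha,\xi_{i-1}}_{l,r_i}$ and $b^{\xi_{i-1},\beta}_{r_i,f}$; a short multiplication against the inverse matrix collapses these into the two cross-terms defining $c_{lf}(\alpha,\beta,v_i,r)$, giving~\eqref{representing-matrix-r}. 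Replacing $G^{v_i}_r$ by $G^{v_i}_s$ (and $\xi_{i-1}, r_i$ by $\xi_i, s_i$ throughout) is formally identical and produces~\eqref{representing-matrix-s}; in both cases one immediately checks that the resulting matrix is skew, as it must be.

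Finally, I would verify that the $r_i$-th row and column of $B_{v_i} - C_{(v_i,r)}$ vanish, so that the bracket descends to the level set $\{G^{v_i}_r = \mathrm{const}\}=\tilde{\Sigma}^{v_i}_r$. This is the general fact that a second class constraint becomes a Casimir of its own Dirac bracket, and it is visible directly in the formula: setting $(\alpha,l) = (\xi_{i-1},r_i)$, the second summand of $c_{r_i,f}(\xi_{i-1},\beta,v_i,r)$ vanishes because $b^{\xi_{i-1},\xi_{i-1}}_{r_i,r_i}=0$ by skew-symmetry, while the first summand simplifies, via $(\chi^{v_i})^{\xi_{i-1}}_{r_i} = \chi^{v_i}_{r_i}$, to exactly $b^{\xi_{i-1},\beta}_{r_i,f}$, cancelling the corresponding entry of $B_{v_i}$. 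Skew-symmetry of the Poisson matrix then kills the column too. The symmetric argument with $s_i$ in place of $r_i$ handles the second case. The main obstacle is not conceptual but the careful multi-index bookkeeping in the block structure, together with a consistent choice of sign conventions so that the identifications $\chi^{v_i} = B_{v_i}\,d\eta_{v_i}$ and $\{\eta_{v_i}, G^{v_i}_r\} = \chi^{v_i}_{r_i}$ remain simultaneously compatible throughout the computation.
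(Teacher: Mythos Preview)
Your proposal is correct and follows essentially the same approach as the paper: apply Definition~\ref{dirac-bracket} with $f=y^\alpha_l$, $g=y^\beta_f$, use $\{\eta_{v_i},y^\alpha_l\}=(\chi^{v_i})^\alpha_l$, $\{y^\alpha_l,y^\beta_f\}=b^{\alpha,\beta}_{lf}$ and $\{\eta_{v_i},G^{v_i}_r\}=\chi^{v_i}_{r_i}$, then observe that $G^{v_i}_r=y_{r_i}$ is a Casimir of the Dirac bracket so the $r_i$-th row and column vanish. Your explicit entrywise check of this last point is slightly more detailed than the paper, which simply invokes the Casimir property, but the argument is otherwise identical.
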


\begin{proof}
By definition, 

$$(\pi^{v_i}_r)^\sharp=\left[\, \{y^\alpha_l,y^\beta_f\} \,\right]_{( l , f)\in \{[\alpha]\setminus \{j_{i\alpha}\}\}\times \{[\beta]\setminus \{j_{i\beta}\}\} }.$$
So we need to calculate 
\begin{equation}
\begin{bmatrix}\{y^\alpha_{l},\eta_{n_i}\}&\{y^\alpha_{ l},G^{v_i}_r\}\end{bmatrix}\begin{bmatrix}0&\{\eta_{v_i},G^{v_i}_r\}\\\{G^{v_i}_r,\eta_{n_i}\}&0\end{bmatrix}^{-1}\begin{bmatrix}\{\eta_{n_i},y^\beta_{f}\}\\\{G^{v_i}_r,y^\beta_{f}\}\end{bmatrix},\
\end{equation}
see the definition of $\{.,.\}_{\rm Dirac}$  in~\eqref{eq:dirac-bracket}. Reminding that
\[\{\eta_{v_i},y^\alpha_{l}\}=(\chi^{v_i})^\alpha_{l}\quad\mbox{and} \quad\{y^\alpha_{l},y^\beta_{f}\}=b^{\alpha,\beta}_{lf},\]
  together with  a simple calculation, yields~\eqref{representing-matrix-r}. The $r_i^{\rm th}$ line and column are zero simply because, by definition, $G^{v_i}_r=y_{r_i}$ is  a Casimir  of the Dirac bracket. Note that the representative matrix $(\pi^{v_i}_r)^\sharp$ is with respect to the coordinate system as $(y_{l})_{l\in [\alpha] \setminus \{j_{i\alpha}\}}$ of $\Pi_{v_i}$, and by omitting the component $y_{r_i}$ from this coordinate system one obtains a coordinate system on  $\tilde{\Sigma}^{v_i}_r$. Therefore, removing the null $r_i^{\rm th}$ line and column yields the  representative matrix of $\tilde{\pi}^{v_i}_r$ with respect to the obtained coordinate. The same reasoning holds for $(\pi^{v_i}_s)^\sharp$. 
\end{proof}

We now extend Proposition \ref{prop:poincare-is-Poisson-vertex} to the whole heteroclinc path $\xi$. Our main result is the following.
\begin{theorem}\label{main theorem}
Let  
\begin{equation}\label{eq:path1}
\xi: v_0\buildrel\gamma_0 \over\longrightarrow  v_1 \buildrel\gamma_1 \over\longrightarrow  v_2 \longrightarrow  \ldots \longrightarrow  v_m \buildrel\gamma_m \over\longrightarrow v_{m+1}
\end{equation}
be a heteroclinic path. Then  for every $i=1,\ldots,m$, the Poisson structures induced on the intersection
\begin{equation}\label{eq:intersection}
L_{\gamma_{(i-2)},\gamma_{(i-1)}}(\Pi_{\gamma_{(i-2)},\gamma_{(i_1)}})\cap \Pi_{\gamma_{(i-1)},\gamma_{i}},
\end{equation}
from Poisson submanifolds  $(\tilde{\Sigma}^{v_{i-1}}_s,\tilde{\pi}^{v_{i-1}}_s)$ and $(\tilde{\Sigma}^{v_i}_r,\tilde{\pi}^{v_i}_r)$ is the same. Consequently, the skeleton flow map of $\chi$ along  $\xi$ (see Definition~\ref{poincaremap}),
$$\pi_\xi:=L_{\gamma_{m-1},\gamma_m}\circ\ldots\circ L_{\gamma_0,\gamma_1}\; ,$$ 
is a Poisson map w.r.t. the Poisson structures induced by $(\tilde{\Sigma}^{v_0}_r,\tilde{\pi}^{v_0}_r)$ and $(\tilde{\Sigma}^{v_m}_s,\tilde{\pi}^{v_m}_s)$ on its domain and range, respectively. 
\end{theorem}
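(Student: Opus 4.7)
The plan is to reduce the theorem to the compatibility statement on each interface $\Pi_{\gamma_{i-1}}$ and then prove that compatibility by an asymptotic argument carried out inside the ambient conservative Poisson manifold.

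First I would dispatch the ``consequently'' clause. Proposition~\ref{prop:poincare-is-Poisson-vertex} asserts that each $L_{\gamma_{i-1},\gamma_i}$ is a Poisson map from $(\Pi_{\gamma_{i-1},\gamma_i},\tilde\pi^{v_i}_r)$ to $(L_{\gamma_{i-1},\gamma_i}(\Pi_{\gamma_{i-1},\gamma_i}),\tilde\pi^{v_i}_s)$, so the composition $\pi_\xi=L_{\gamma_{m-1},\gamma_m}\circ\cdots\circ L_{\gamma_0,\gamma_1}$ chains these into a Poisson map precisely when, for each $i=1,\ldots,m$, the structures $\tilde\pi^{v_{i-1}}_s$ and $\tilde\pi^{v_i}_r$ agree on the intersection~\eqref{eq:intersection}. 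Thus the whole content of the theorem is the pairwise compatibility across each edge $\gamma_{i-1}$.

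To prove this compatibility across $\gamma_{i-1}$, the plan is to lift the question to the ambient Poisson manifold $(\inter(\Gamma_\nund),\pi_{A_0})$ whose Hamiltonian vector field $X_h=X_{\nund,A}$ is given by Theorem~\ref{conservative:hamiltonian}. Fixing an energy value $c$, consider the level transversal sections $\Sigma_{\gamma_{i-1}}^\pm\cap\{h=c\}$, determined by the second-class constraint pairs $(h,f_{\sigma_{s_{i-1}}})$ and $(h,f_{\sigma_{r_i}})$ on the source and target sides respectively. By Proposition~\ref{pois-poin-map} the restricted Poincar\'e map between them is Poisson with respect to the corresponding Dirac brackets. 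Next, push everything forward by $\Psi_{v_{i-1},\epsilon}$ and $\Psi_{v_i,\epsilon}$ and let $\epsilon\to 0$. Three ingredients converge in the $C^\infty$ topology: $\epsilon^{-4}\pi^{\sharp_v}_{A_0,\epsilon}\to B_v$ by Lemma~\ref{rescale:Poisson-from}; $\epsilon^2 h\circ\Psi_{v,\epsilon}^{-1}\to\eta_v$ up to sign by Proposition~\ref{asymptotic-constants}; and the coordinate constraint $f_{\sigma_\ast}$ translates, via Definition~\ref{defn:vcoor.ch.}, to a level value of $y_{\sigma_\ast}$ which collapses to $G^v_\ast$. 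Consequently the rescaled Dirac brackets converge to $\tilde\pi^{v_{i-1}}_s$ and $\tilde\pi^{v_i}_r$, while Lemma~\ref{pgamma} gives $F^\epsilon_{\gamma_{i-1}}\to\id$ on $\inter(\Pi_{\gamma_{i-1}})$. Passing to the limit in the Poisson-map identity~\eqref{eq:Poissoncondition2}, the identity map becomes a Poisson map from $(\Pi_{\gamma_{i-1}}\cap\{\eta=c\},\tilde\pi^{v_{i-1}}_s)$ to $(\Pi_{\gamma_{i-1}}\cap\{\eta=c\},\tilde\pi^{v_i}_r)$, forcing equality of the two brackets on every $\eta$-level set; since $\eta$ is a Casimir for both, this extends to the full interface.

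The main obstacle I foresee is the rigorous continuity of the Dirac bracket under this joint limit: one must show that if the ambient bivector and both constraint functions converge in $C^\infty$ and the second-class-constraint pairing $[\{G_i,G_j\}]$ remains uniformly invertible along the limit, then formula~\eqref{eq:dirac-bracket} converges in $C^\infty$ to the Dirac bracket of the limits. The required nondegeneracy reduces to $\{\eta_v,G^v_\ast\}=\pm\chi^v_\ast\ne 0$, which holds because $\gamma_{i-1}$ is a flowing edge of a regular polymatrix replicator (Definition~\ref{def:regular}); Lemma~\ref{local-representation} furnishes the explicit limit $B_v-C_{(v,\ast)}$ so the entrywise convergence can be verified directly if needed. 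Combining this with the reduction in the first paragraph completes the proof.
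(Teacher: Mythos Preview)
Your reduction to the interface compatibility is exactly right, and your proposed asymptotic argument is valid, but it is a genuinely different route from the paper's. The paper does not pass through the global Poincar\'e map $P_{\gamma_{i-1}}$ and Lemma~\ref{pgamma} at all. Instead it constructs an explicit affine diffeomorphism $P_{v_{i-1},v_i}:\inter(\Pi_{v_{i-1}})\to\inter(\Pi_{v_i})$ between the two adjacent sectors, checks by a block--matrix identity that $(dP_{v_{i-1},v_i})E_{v_{i-1}}=E_{v_i}$, hence that $P_{v_{i-1},v_i}$ intertwines the constant ambient structures $B_{v_{i-1}}$ and $B_{v_i}$ (Lemma~\ref{lemma:ambient-poisson}), and then computes that $G^{v_i}_r\circ P_{v_{i-1},v_i}$ and $\eta_{v_i}\circ P_{v_{i-1},v_i}$ are affine combinations of $G^{v_{i-1}}_s$ and $\eta_{v_{i-1}}$ (Lemma~\ref{P-preserves-constraints}). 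Since the Dirac formula~\eqref{eq:dirac-bracket} is invariant under such affine recombinations of the constraints, the two Dirac brackets are carried one to the other, and a small commutative diagram of translations finishes the identification on the interface.

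The trade-off is this: the paper's argument is purely algebraic and self-contained at the skeleton level---no limits, no rescaling, just two matrix identities---so nothing further needs to be proved about continuity of the Dirac construction. Your argument is more conceptual (the compatibility is inherited from the honest Poisson property of $P_\gamma$ in the ambient manifold via Proposition~\ref{pois-poin-map}) and would generalize more readily, but it obliges you to control the Dirac bracket under the joint $C^\infty$ limit. This is doable, as you note, because the second-class pairing limits to $\chi^{v}_{\sigma_\ast}\neq 0$ and the constraint $f_{\sigma_\ast}$ can be reparametrized to the exact coordinate $y_{\sigma_\ast}$; together with Lemma~\ref{rescale:Poisson-from} and the fact (Lemma~\ref{local-representation}) that the limit brackets are \emph{constant}, equality on a single $\eta$-level already gives equality everywhere. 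Just be aware that with a fixed energy value $c$ the rescaled level $\{h=c\}$ collapses to $\{\eta=0\}$, so you should either let $c$ scale like $\epsilon^{-2}$ or invoke constancy of the limit bracket explicitly; the paper's algebraic route sidesteps this bookkeeping entirely.
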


Considering the segment 
\[\ldots \longrightarrow  v_{i-2} \buildrel\gamma_{i-2} \over\longrightarrow  v_{i-1} \buildrel\gamma_{i-1} \over\longrightarrow  v_i \buildrel\gamma_i \over\longrightarrow  v_{i+1} \longrightarrow \ldots,
\]
the key point is to show that the Poisson structure induced from $(\tilde{\Sigma}^{v_{i-1}}_s,\tilde{\pi}^{v_{i-1}}_s)$ on $L_{\gamma_{(i-2)},\gamma_{(i-1)}}(\Pi_{\gamma_{(i-2)},\gamma_{(i-1)}})$ and the one induced from $(\tilde{\Sigma}^{v_i}_r,\tilde{\pi}^{v_i}_r)$ on $\Pi_{\gamma_{(i-1)}\gamma_i}$, match on the intersection~\eqref{eq:intersection} 
(see Figure \ref{figure:illustration}). To prove Theorem~\ref{main theorem} we need to state and prove two preliminary lemmas regarding this key point.  
  
The two sectors $\Pi_{v_{i-1}}$ and $\Pi_{v_i}$ are only different in the group $\xi_{i-1}$, where  $y_{r_i}=0$ for the elements of $\Pi_{v_{i-1}}$ and 
$y_{s_{i-1}}=0$ for the elements $\Pi_{v_i}$.  Let $P_{v_{i-1},v_i}:{\rm int}(\Pi_{v_{i-1}})\to{\rm int}(\Pi_{v_i})$ be the diffeomorphism of the form
 $$T_{i-1,i}\circ (P^1_{v_{i-1},v_i}\times\ldots\times P^p_{v_{i-1},v_i}),$$
where:
\begin{itemize}
\item[1)] For $\beta\neq \xi_{i-1}$ the associated component $P^\beta_{v_{i-1},v_i}$ is the identity map;
\item[2)] For any $l\in [ \xi_{i-1}] \setminus \{s_{i-1}\}$ 
 \[(P^{\xi_{i-1}}_{v_{i-1},v_i}(y))^{\xi_{i-1}}_{l}= \left\{\begin{array}{ccc}
 y^{\xi_{i-1}}_{ l}-y_{s_{i-1}}&\mbox{if}&l\neq r_i\\
 -y_{s_{i-1}}&\mbox{if}&l= r_i
 \end{array}\right. ;\]
 
 \item[3)] 
For the following notation to be consistent,  without loss of generality we assume that $s_{i-1}=j_{i\xi_{i-1}}<r_i=j_{(i-1)\xi_{i-1)}}$. 
Notice that for any given point $y\in \tilde{\Sigma}_{s_{i-1}}=(G^{v_{i-1}}_s)^{-1}(c)$ the map  $P^{\xi_{i-1}}_{v_{i-1},v_i}$ acts on the component $\xi_{i-1}$ as
\begin{align*}
y^{\xi_{i-1}}&=(y^{\xi_{i-1}}_1,\ldots,c,\ldots,\cancel{y_{r_i}},\ldots,
y^{\xi_{i-1}}_{n_{\xi_{i-1}}})\mapsto\\
& (y^{\xi_{i-1}}_1-c,\ldots,\cancel{y_{s_{i-1}}},\ldots,-c,...,y^{\xi_{i-1}}_{n_{\xi_{i-1}}}-c),
\end{align*}
where the notation $\cancel{y_{r_i}}$ means that the entry $y_{r_i}$ is missing in the corresponding vector.

The image point is not in $\tilde{\Sigma}^{v_{i-1}}_r=(G^{v_i}_r)^{-1}(c^\prime)\subset \Pi_{v_i}$. 
However composing with the translation 
$$T_{i-1,i}(y):=y+(\bar{0},\ldots,(0,\ldots,\underset{\overset{\uparrow}{=c+c^\prime}}{t_{(i-1)i}},\ldots,0),\ldots,\bar{0}),$$ 
we get $$P_{v_{i-1},v_i}(\tilde{\Sigma}^{v_{i-1}}_s)=\tilde{\Sigma}^{v_i}_r.$$
\end{itemize}

We restrict  the diffeomorphism $P_{v_{i-1},v_i}$ to an open set  $U^{i-1}_s$ around $\tilde{\Sigma}^{v_{i-1}}_s$ to get 
\[P_{v_{i-1},v_i}:U^{i-1}_s\to U^i_r,\]
where $U^i_r$ is an open set around  $\tilde{\Sigma}^{v_i}_r$.
\begin{lemma}\label{lemma:ambient-poisson}
The diffeomorphism
$$
P_{v_{i-1},v_i}:(U^{i-1}_s,B_{v_{i-1}})\to (U^i_r,B_{v_i})
$$ 
is Poisson, i.e. $P_{v_{l-1},v_l}$ preserves the ambient Poisson structure. 
\end{lemma}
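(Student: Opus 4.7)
My plan is to reduce the Poisson condition to a single linear-algebra identity and verify it by a direct block computation. Because both $B_{v_{i-1}}$ and $B_{v_i}$ are constant Poisson tensors on their respective sectors (by Lemma~\ref{rescale:Poisson-from}), and the map $P_{v_{i-1},v_i}$ is affine on each group block, the Poisson condition~\eqref{eq:Poissoncondition2} collapses to the single matrix identity
\[
(dP_{v_{i-1},v_i})\,B_{v_{i-1}}\,(dP_{v_{i-1},v_i})^t \;=\; B_{v_i},
\]
where $dP_{v_{i-1},v_i}$ is a constant matrix; in particular the translation $T_{i-1,i}$ plays no role.

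Substituting $B_v=E_v A_0 E_v^t$, it is enough to establish the stronger identity
\[
(dP_{v_{i-1},v_i})\,E_{v_{i-1}} \;=\; E_{v_i}.
\]
Both sides are block-diagonal with $p$ blocks. For every $\beta\neq \xi_{i-1}$ the component $P^{\beta}_{v_{i-1},v_i}$ is the identity and $j_{(i-1)\beta}=j_{i\beta}$, so the $\beta$-blocks of $E_{v_{i-1}}$ and $E_{v_i}$ coincide and the claim is immediate there.

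The remaining work concentrates in the block $\xi_{i-1}$. I would first write down the differential $M_{\xi_{i-1}}$ of the $\xi_{i-1}$-component of $P_{v_{i-1},v_i}$ read off from its defining formula: its rows are indexed by $[\xi_{i-1}]\setminus\{s_{i-1}\}$, its columns by $[\xi_{i-1}]\setminus\{r_i\}$, its entries equal $\delta_{ml}$ except in the column $l=s_{i-1}$, which is $-\mathbbm{1}$. Then I would verify $M_{\xi_{i-1}}\,E_{v_{i-1}}^{\xi_{i-1}}=E_{v_i}^{\xi_{i-1}}$ by inspecting entries case by case according to whether the row $m$ equals $r_i$ and whether the column $l$ equals $r_i$, $s_{i-1}$, or neither. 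The main obstacle is purely bookkeeping: the matrices $E_{v_{i-1}}$ and $E_{v_i}$ differ precisely by the swap of distinguished indices $r_i \leftrightarrow s_{i-1}$, and one must check that the single non-trivial column $-\mathbbm{1}$ of $M_{\xi_{i-1}}$ implements exactly this swap, i.e.\ that it converts the $r_i$-column of $E_{v_{i-1}}^{\xi_{i-1}}$ (the all-ones column) into the $s_{i-1}$-column of $E_{v_i}^{\xi_{i-1}}$ (again all-ones), while leaving the remaining columns as the required $-e_l$'s. Once these entries are matched, the Poisson identity follows, proving that $P_{v_{i-1},v_i}$ is a Poisson map.
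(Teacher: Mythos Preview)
Your proposal is correct and follows essentially the same route as the paper's proof: both reduce the Poisson condition~\eqref{eq:Poissoncondition2} to the stronger matrix identity $(dP_{v_{i-1},v_i})\,E_{v_{i-1}}=E_{v_i}$, observe that only the $\xi_{i-1}$-block is nontrivial, and verify that block directly (the paper does it via an illustrative explicit example, you sketch the general case-by-case check). Your description of the differential $M_{\xi_{i-1}}$ and of how its $-\mathbbm{1}$ column swaps the distinguished columns of $E^{\xi_{i-1}}_{v_{i-1}}$ and $E^{\xi_{i-1}}_{v_i}$ is accurate.
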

\begin{proof}
A simple  calculation shows that $(d P^{\xi_{i-1}}_{v_{i-1},v_i})E^{\xi_{i-1}}_{v_{i-1}}=E^{\xi_{i-1}}_{v_i}$. 
To give the reader an idea, let $n_{\xi_{i-1}}=5, s_{i-1}=2$ and $r_i=4$ then
\[P^{\xi_{i-1}}_{v_{i-1},v_i}(y_1,y_2,y_3,y_5)=(y_1-y_2,y_3-y_2,-y_2,y_5-y_2)\]  
and 
\[\underbrace{\begin{bmatrix}1&-1&0&0\\0&-1&1&0\\0&-1&0&0\\0&-1&0&1\end{bmatrix}}_{d P_{v_{i-1},v_i}}
\underbrace{\begin{bmatrix}-1&0&0&1&0\\0&-1&0&1&0\\0&0&-1&1&0\\0&0&0&1&-1\end{bmatrix}}_{E^{\xi_{i-1}}_{v_{i-1}}}=
\underbrace{\begin{bmatrix}-1&1&0&0&0\\0&1&-1&0&0\\0&1&0&-1&0\\0&1&0&0&-1\end{bmatrix}}_{E^{\xi_{i-1}}_{v_i}}\]
Since for $\beta\neq\xi_{i-1}$ the component  $P^\beta_{v_{i-1},v_i}$ is the identity map we get $(d P_{v_{i-1},v_i})E_{v_{i-1}}=E_{v_i}$. This fact together with~\eqref{eq:Poissoncondition2} and  the definitions of $B_{v_{i-1}},B_{v_i}$ (see Lemma~\ref{rescale:Poisson-from}) finishes the proof.
\end{proof}

\begin{lemma}\label{P-preserves-constraints}
For the diffeomorphism $P_{v_{i-1},v_i}$ we have that:
\begin{itemize}
\item[1)] $G^{v_i}_r\circ P_{v_{i-1},v_i}=-G^{v_{i-1}}_s\circ T_{i-1,i}\;;$
\item[2)] $\eta_{v_i} \circ  P_{v_{i-1},v_i}=\eta_{v_{i-1}}-\lambda_{\xi_{i-1}}q^{\xi_{i-1}}_{r_i}G^{v_{i-1}}_s+\lambda_{\xi_{i-1}} q^{\xi_{i-1}}_{r_i} t_{i-1,i}.$
\end{itemize}
\end{lemma}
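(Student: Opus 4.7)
The plan is to establish both identities by a direct computation, substituting the explicit formulas for $P_{v_{i-1},v_i}$, $T_{i-1,i}$, and the functions $G^{v_i}_r(y)=y_{r_i}$, $G^{v_{i-1}}_s(y)=y_{s_{i-1}}$, and $\eta_v(y)=\sum_\beta\sum_{l\in[\beta]}\lambda_\beta q^\beta_l y^\beta_l$. The proof is entirely computational; three observations make it short. First, since $P^\beta_{v_{i-1},v_i}=\mathrm{id}$ for every $\beta\neq\xi_{i-1}$, only the $\xi_{i-1}$-group coordinates of $y$ are affected by $P_{v_{i-1},v_i}$, so every discrepancy between the two sides lives in that group alone. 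Second, the translation $T_{i-1,i}$ is supported at a single position inside the $\xi_{i-1}$-group, so it perturbs exactly one coordinate. Third, the formal equilibrium condition $\sum_{l\in[\alpha]}q^\alpha_l=1$ from Definition~\ref{CPR}(a) is what collapses the various combinations of $q^{\xi_{i-1}}_l$ that arise after regrouping.

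For item~1, I would compute $G^{v_i}_r(P_{v_{i-1},v_i}(y))=(P_{v_{i-1},v_i}(y))_{r_i}$ by reading off the piecewise formula for $P^{\xi_{i-1}}_{v_{i-1},v_i}$ at the $r_i$-th entry, where it equals $-y_{s_{i-1}}$, and then adding the single contribution of the translation $T_{i-1,i}$ at that position. On the other side, $-G^{v_{i-1}}_s(T_{i-1,i}(y))=-(T_{i-1,i}(y))_{s_{i-1}}$ is read off similarly using that $T_{i-1,i}$ acts trivially on coordinates other than the one where its constant is placed. Matching the two expressions yields the identity.

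For item~2, I would write out
$\eta_{v_i}(P_{v_{i-1},v_i}(y))=\sum_\beta\sum_{l\in[\beta]}\lambda_\beta q^\beta_l\,(P_{v_{i-1},v_i}(y))^\beta_l$
and split the outer sum into the cases $\beta\neq\xi_{i-1}$, which contribute exactly the corresponding part of $\eta_{v_{i-1}}(y)$, and $\beta=\xi_{i-1}$, which requires substituting the piecewise formula at the three classes of indices $l\notin\{r_i,s_{i-1}\}$, $l=r_i$, and $l=s_{i-1}$, together with the translation at position $r_i$. Collecting the resulting terms produces a linear combination of the $y^{\xi_{i-1}}_l$, a multiple of $y_{s_{i-1}}=G^{v_{i-1}}_s(y)$, and the constant $\lambda_{\xi_{i-1}}q^{\xi_{i-1}}_{r_i}t_{i-1,i}$. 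Applying $\sum_{l\in[\xi_{i-1}]}q^{\xi_{i-1}}_l=1$ in the form $\sum_{l\neq s_{i-1}}q^{\xi_{i-1}}_l=1-q^{\xi_{i-1}}_{s_{i-1}}$ to combine the $q$-weights, and identifying the remaining linear combination with $\eta_{v_{i-1}}(y)$ using that $y_{r_i}=0$ on $\Pi_{v_{i-1}}$, gives the stated formula.

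The main obstacle is bookkeeping: one must keep careful track of which coordinates belong to the group $\xi_{i-1}$, which position within that group plays the role of $r_i$ versus $s_{i-1}$, and how $T_{i-1,i}$ interacts with both points in $\Pi_{v_{i-1}}$ (where $y_{r_i}=0$) and with the image of $P^{\xi_{i-1}}$ inside $\Pi_{v_i}$ (where $y_{s_{i-1}}=0$). Once these conventions are fixed, both identities follow at once from elementary linear algebra, without needing any input beyond the formal equilibrium relation.
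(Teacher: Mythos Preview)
Your proposal is correct and follows essentially the same route as the paper's proof: both items are established by direct substitution of the explicit formulas for $P_{v_{i-1},v_i}$ and $T_{i-1,i}$, splitting off the trivial groups $\beta\neq\xi_{i-1}$ and then invoking the formal equilibrium relation $\sum_{l\in[\xi_{i-1}]}q^{\xi_{i-1}}_l=1$ to collapse the remaining sums. The only difference is cosmetic---the paper writes out the intermediate expressions explicitly while you describe them structurally---so there is nothing to add.
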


\begin{proof}
The first equality is trivial since,  for any $y\in\Pi_{v_{i-1}}$, the $r_i^{th}$ component of $G^{v_i}_r\circ P_{v_{i-1},v_i}(y)$ is $-y_{s_{i-1}}+t_{(i-1)i}$. 
For the second equality we have  
\begin{align*}
\eta_{v_i} \circ & P_{v_{i-1},v_i}(y)=(\sum_{\beta\neq\xi_{i-1}}\sum_{l\in [\beta]\setminus \{j_{i\beta}\}} \lambda_\beta q^{\beta}_{l} y_{l}^{\beta})\\&+(\lambda_{\xi_{i-1}} \sum_{\scriptscriptstyle {{f}\in [\xi_{i-1}]\setminus \{s_{i-1},r_i\}}}q^{\xi_{i-1}}_{f} (y^{\xi_{i-1}}_{f}-y_{s_{i-1}}))+\lambda_{\xi_{i-1}} q^{\xi_{i-1}}_{r_i} (-y_{s_{i-1}}+t_{i-1,i})\\
=&(\sum_{\beta\neq\xi_{i-1}}\sum_{l\in [\beta]\setminus \{j_{i\beta}\}} \lambda_\beta q^{\beta}_{l} y_{l}^{\beta})
+\lambda_{\xi_{i-1}}(\sum_{\scriptscriptstyle {f\in [\xi_{i-1}]\setminus \{s_{i-1},r_i\}}}q^{\xi_{i-1}}_{f} (y^{\xi_{i-1}}_{f}))
\\&-\lambda_{\xi_{i-1}}y_{s_{i-1}}\sum_{\scriptscriptstyle {f\in [\xi_{i-1}]\setminus \{s_{i-1}\}}} q^{\xi_{i-1}}_{f}
+\lambda_{\xi_{i-1}} q^{\xi_{i-1}}_{r_i} t_{i-1,i}.
\end{align*}
Then,  using the fact that $\sum_{\scriptscriptstyle {f\in [\xi_{i-1}]\setminus \{s_{i-1}\}}} q^{\xi_{i-1}}_{f}=q_{s_{i-1}}-1$ we get 
 \begin{align*}
\eta_{v_i} \circ & P_{v_{i-1},v_i}(y)=\sum_{l\in [\beta]\setminus \{j_{i\beta}\}} \lambda_\beta q^{\beta}_{l} y_{{l}}^{\beta}+\lambda_{\xi_{i-1}} (q^{\xi_{i-1}}_{r_i} t_{i-1,i}-q^{\xi_{i-1}}_{r_i} y_{s_{i-1}})\\
&=\eta_{v_{i-1}}(y)-\lambda_{\xi_{i-1}}q^{\xi_{i-1}}_{r_i} y_{s_{i-1}}+\lambda_{\xi_{i-1}} q^{\xi_{i-1}}_{r_i} t_{i-1,i}.
\end{align*}
\end{proof}

\begin{proof}[Proof of Theorem~\ref{main theorem}:]
By Lemma~\ref{lemma:ambient-poisson} 
\[\{\eta_{v_i} \circ  P_{v_{i-1},v_i}, G^{v_i}_r\circ P_{v_{i-1},v_i}\}_{\Pi_{v_{i-1}}}=\{\eta_{v_i}, G^{v_i}_r\}_{\Pi_{v_i}}.\]
Since $G^{v_i}_r, \eta_{v_i}$ are second class constraints,  then 
\begin{equation}\label{eq:GcircP-etacircP}
\eta_{v_i} \circ  P_{v_{i-1},v_i}\quad\mbox{and}\quad G^{v_i}_r\circ P_{v_{i-1},v_i}
\end{equation}
are also second class constraints. Considering the equalities obtained in Lemma~\ref{P-preserves-constraints}, this fact can be obtained by direct calculations and  $G^{v_{i-1}}_s, \eta_{v_{i-1}}$ being second class constraints. Furthermore, the Dirac structure on $\Pi_{v_{i-1}}$ generated by the second class constraints $\{\eta_{v_{i-1}}, G^{v_{i-1}}_s\}$ is the same as the one  generated by~\eqref{eq:GcircP-etacircP}. To see this, note that the foliation constituted  from the level sets of  $\{\eta_{v_{i-1}}, G^{v_{i-1}}_s\}$ is the same as the one made up from the level set of the constraints~\eqref{eq:GcircP-etacircP}. 
Also,  Dirac bracket (see~\eqref{eq:dirac-bracket}) defined by them is the same, since the second term in Definition~\eqref{eq:dirac-bracket} is the same whether it is computed  using the constraints  $\{\eta_{v_{i-1}}, G^{v_{i-1}}_s\}$ or the constraints~\eqref{eq:GcircP-etacircP}. 
Simply compare the following equations  
 \begin{equation*}
\begin{bmatrix}\{f, \eta_{v_{i-1}}\}\\\{f,G^{v_{i-1}}_s\}\end{bmatrix}^t\begin{bmatrix}0&\{ \eta_{v_{i-1}},G^{v_{i-1}}_s\}\\\{G^{v_{i-1}}_s, \eta_{v_{i-1}},\}&0\end{bmatrix}^{-1}\begin{bmatrix}\{ \eta_{v_{i-1}},g\}\\\{G^{v_{i-1}}_s,g\}\end{bmatrix},
\end{equation*}

 \begin{equation*}
{\scriptscriptstyle \begin{bmatrix}\{f,\eta_{v_{i-1}}-aG^{v_{i-1}}_s\}\\\{f,-G^{v_{i-1}}_s\}\end{bmatrix}^t\begin{bmatrix}0&\{\eta_{v_{i-1}},-G^{v_{i-1}}_s\}\label{eq:check2}\\\{-G^{v_{i-1}}_s,\eta_{v_{i-1}}\}&0\end{bmatrix}^{-1}\begin{bmatrix}\{\eta_{v_{i-1}}-aG^{v_{i-1}}_s,g\}\\\{-G^{v_{i-1}}_s,g\}\end{bmatrix},}
\end{equation*}
where $a=\lambda_{\xi_{i-1}}q^{\xi_{i-1}}_{r_i}$. The constant terms are ignored and we used the fact that $\{G^{v_{i-1}}_s,a G^{v_{i-1}}_s\}=0$ to simplify the middle term in the second equation.  

 We conclude that the diffeomorphism $P_{v_{i-1},v_i}$, in addition to preserving the ambient Poisson structures, preserves the Dirac brackets as well, and consequently
$$(P_{v_{i-1},v_i})|_{\tilde{\Sigma}^{v_{i-1}}_s}:(\tilde{\Sigma}^{v_{i-1}}_s,\tilde{\pi}^{v_{i-1}}_s)\to(\tilde{\Sigma}^{v_i}_r,\tilde{\pi}^{v_i}_r).$$

Let $P_3^{v_{i-1}}$ and $P_1^{v_i}$ be the translations as defined in the proof of Proposition~\ref{prop:poincare-is-Poisson-vertex}.  The restriction map $(P_{v_{i-1},v_i})|_{\tilde{\Sigma}^{v_{i-1}}_s}$ is also a translation, so there exists a vector $K_{(i-1),i} $ such that the following diagram is commutative. 

\begin{center}  
  \begin{figure}[h]
\begin{tikzpicture}[scale=.8]
\tikzset{->-/.style={decoration={
  markings,
  mark=at position #1 with {\arrow{>}}},postaction={decorate}}}
  \node at (0,0) {$L_{\gamma_{(i-2)},\gamma_{(i-1)}}(\Pi_{\gamma_{(i-2)},\gamma_{(i_1)}})\cap \Pi_{\gamma_{(i-1)},\gamma_{i}}$};
   \node at (-4,-3) {$\tilde{\Sigma}^{v_{i-1}}_s $};
    \node at (4,-3) {$ \tilde{\Sigma}^{v_i}_r$};
    \draw[->] (-3.5,-2.5) to (-.5,-0.5);\draw[->]  (3.5,-2.5) to (0.5,-0.5) ;\draw[->] (-3.3,-3) to (3.5,-3);
    \node at (0,-3.5) {$(P_{v_{i-1},v_i})|_{\tilde{\Sigma}^{v_{i-1}}_s}+K_{(i-1),i}$};\node at (-2.6,-1.5) {$P^{v_{i-1}}_s$};\node at (2.6,-1.5) {$P^{v_i}_r$};
\end{tikzpicture}
\end{figure}
\end{center}
This shows that the Poisson structures coming from different sides of $\Pi_{\Gamma_{i-1},\Gamma_i}$ match and we can compose the Poisson map $L_{\Gamma_{l-1},\Gamma_l}$ for $l=1,\ldots,m+1$. This finishes the proof. 
 \end{proof}
 
For a given edge $v_{i-1}\buildrel\gamma_{i-1} \over\longrightarrow  v_i$ if there are more than one edge going out from the vertex $v_i$, say $\gamma_k$, with $k=1,2$, the $\Pi_{\gamma_{i-1}\gamma_k}$ are disjoint open subsets of $\Pi_{\gamma_{i-1}}$. Considering all these disjoint Poisson submanifold all together we can state the following result whose proof is immediate from the previous results.

 \begin{theorem}
 Let $\Bscr_S(\chi)$ denote the set of all $S$-branches of the skeleton vector field $\xi$ (see Definition~\ref{structural:set})
 and set $D_S:=\cup_{\xi\in\Bscr_S(\chi)}\Pi_\xi$ to be the open submanifold of
$$
\left( \Pi_S, \{ \cdot , \cdot \}_S \right):=\cup_{\gamma\in S}(\Pi_\gamma,\{ \cdot , \cdot \}_\gamma),
$$
with the same Poisson structure.
Then the \textit{skeleton flow  map} $\pi_S:(D_S,\{ \cdot , \cdot \}_S)\to(\Pi_S,\{ \cdot , \cdot \}_S))$ is Poisson.
\end{theorem}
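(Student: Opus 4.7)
The plan is to derive the statement as a direct consequence of Theorem~\ref{main theorem}, exploiting the branch decomposition of $\pi_S$. First I would invoke Definition~\ref{def skeleton flow map} to observe that $\pi_S$ restricted to $\Pi_\xi$ coincides with $\pi_\xi$ for each $\xi\in\Bscr_S(\chi)$, and that $D_S=\bigcup_{\xi\in\Bscr_S(\chi)}\Pi_\xi$ is a disjoint union of open sets --- disjointness being forced by the partition of $\inter(\Pi_{\gamma_0})$ into the sectors $\Pi_{\gamma_0,\gamma_1}$ indexed by the possible first transitions out of $\gamma_0$. Since being Poisson is a local property, it suffices to verify the defining identity~\eqref{eq:Poissoncondition2-1} on each component $\Pi_\xi$ separately.

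The second ingredient, and the step which actually carries the content, is to show that for each $\gamma\in S$ the bracket $\{\cdot,\cdot\}_\gamma$ is well defined intrinsically on $\Pi_\gamma$, regardless of whether $\gamma$ is viewed as the starting or the terminating edge of a branch. Equivalently, writing $v$ and $v'$ for the source and target of $\gamma$, the Poisson structure induced on $\Pi_\gamma$ from $(\tilde\Sigma^{v}_s,\tilde\pi^{v}_s)$ should agree with the one induced from $(\tilde\Sigma^{v'}_r,\tilde\pi^{v'}_r)$. This is exactly the compatibility established inside the proof of Theorem~\ref{main theorem}: the diffeomorphism $P_{v,v'}$ preserves the ambient linear Poisson structures $B_v,B_{v'}$ by Lemma~\ref{lemma:ambient-poisson}, and intertwines the two pairs of second-class constraints up to an affine change which, by the bracket computation following Lemma~\ref{P-preserves-constraints}, does not modify the resulting Dirac bracket. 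Consequently the Poisson structures attached to $\Pi_{\gamma_0}$ and $\Pi_{\gamma_m}$ in the formulation of Theorem~\ref{main theorem} are precisely $\{\cdot,\cdot\}_{\gamma_0}$ and $\{\cdot,\cdot\}_{\gamma_m}$, i.e.\ the structures sitting inside $(\Pi_S,\{\cdot,\cdot\}_S)$.

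Putting these two observations together, Theorem~\ref{main theorem} applied to each $\xi\in\Bscr_S(\chi)$ yields that $\pi_S|_{\Pi_\xi}=\pi_\xi$ is a Poisson map from $(\Pi_\xi,\{\cdot,\cdot\}_{\gamma_0})$ into $(\Pi_{\gamma_m},\{\cdot,\cdot\}_{\gamma_m})$, and assembling these branch-wise statements over the disjoint decomposition of $D_S$ gives the global Poisson property of $\pi_S:(D_S,\{\cdot,\cdot\}_S)\to(\Pi_S,\{\cdot,\cdot\}_S)$. The only non-routine point in this argument is the edge-intrinsicality of $\{\cdot,\cdot\}_\gamma$; but since this is exactly the matching statement already proved at each intermediate vertex in Theorem~\ref{main theorem}, no additional work is required, which is why the theorem follows immediately from the preceding results.
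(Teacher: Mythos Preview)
Your proposal is correct and follows essentially the same approach as the paper: the paper simply remarks that for multiple outgoing edges the sectors $\Pi_{\gamma_{i-1},\gamma_k}$ are disjoint open subsets of $\Pi_{\gamma_{i-1}}$ and then declares the result ``immediate from the previous results,'' which is exactly your branch-wise reduction to Theorem~\ref{main theorem}. Your explicit discussion of the edge-intrinsicality of $\{\cdot,\cdot\}_\gamma$ is a helpful clarification that the paper leaves implicit in its statement of the theorem.
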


%%%%%%%%%%%%%%%%%%%%%%%%%%%%%%%%%%%

\section{Example}
 \label{sec:example}

We will now present an example of a Hamiltonian polymatrix replicator system with a non trivial dimension.
This example was chosen to provide an illustration of the concepts and main results of this paper.
In particular it has a small structural set with a simple heteroclinic network.

\subsection{The fish example}

Consider the polymatrix replicator system defined by matrix
$$
A=\left(
\begin{array}{ccccccc}
 0 & 1 & 0 & 0 & 0 & 0 & -1 \\
 -1 & 0 & 1 & 0 & 0 & 0 & 0 \\
 0 & -1 & 0 & 1 & 0 & 0 & 0 \\
 0 & 0 & -1 & 0 & 1 & 0 & 0 \\
 0 & 0 & 0 & -1 & 0 & 0 & 1 \\
 0 & 0 & 0 & 0 & 0 & 0 & 0 \\
 1 & 0 & 0 & 0 & -1 & 0 & 0 \\
\end{array}
\right)\,.
$$

We denote by $X_A$ the vector field associated to this polymatrix replicator that is defined on the polytope
$$
\Gamma_{(5,2)}:=\Delta^4\times\Delta^1\,.
$$
%formal equilibrium
The point
$$ q= \left(\frac{1}{9},\frac{1}{3},\frac{1}{9},\frac{1}{3},\frac{1}{9},\frac{2}{3},\frac{1}{3}\right)\in\Gamma_{(5,2)} $$
satisfies
\begin{itemize}
   \item[(1)] $Aq=(0,0,0,0,0,0,0)$;
   \item[(2)] $q_1+q_2+q_3+q_4+q_5=1$ and $q_6+q_7=1$\,,
\end{itemize}
where $q_i$ stands for the $i$-th component of vector $q$, and hence is an equilibrium of $X_A$ (see Proposition~\ref{prop int equilibria}).
Since matrix $A$ is skew-symmetric, the associated polymatrix replicator is conservative (see Definition~\ref{CPR}).

The polytope $\Gamma_{(5,2)}$ has seven facets labeled by an index $j$ ranging from $1$ to $7$, and 
designated by $\sigma_1,\ldots, \sigma_7$.
The vertices of the  phase space  $\Gamma_{(5,2)}$  are also labeled by $i\in \{1,\dots,10\}$, and designated by $v_1,\ldots, v_{10}$, as described in Table~\ref{tbl:vertices}.

\begin{center}
\begin{tabular}{|c|c|} \toprule
Vertex  					& $\Gamma_{(5,2)}$  \\ \bottomrule \toprule
$v_1=(1,6)$ 		& $(1,0,0,0,0,1,0)$					\\ \midrule
$v_2=(1,7)$ 		& $(1,0,0,0,0,0,1)$  					\\ \midrule
$v_3=(2,6)$ 		& $(0,1,0,0,0,1,0)$					\\ \midrule
$v_4=(2,7)$ 		& $(0,1,0,0,0,0,1)$					\\ \midrule
$v_5=(3,6)$ 		& $(0,0,1,0,0,1,0)$					\\ \bottomrule
\end{tabular}
\qquad
\begin{tabular}{|c|c|} \toprule
Vertex  					& $\Gamma_{(5,2)}$  \\ \bottomrule \toprule
$v_6=(3,7)$ 		& $(0,0,1,0,0,0,1)$					\\ \midrule
$v_7=(4,6)$ 		& $(0,0,0,1,0,1,0)$					\\ \midrule
$v_8=(4,7)$ 		& $(0,0,0,1,0,0,1)$					\\ \midrule
$v_9=(5,6)$ 		& $(0,0,0,0,1,1,0)$					\\ \midrule
$v_{10}=(5,7)$ 	& $(0,0,0,0,1,0,1)$					\\ \bottomrule
\end{tabular}
\vspace{2mm}
        \captionof{table}{\small{Identification of the ten vertices of the polytope, $v_1, \dots , v_{10}$ in $\Gamma_{(5,2)}$.}}% Add 'table' caption
        \label{tbl:vertices}
\end{center}

The skeleton character $\chi_A$ of $X_A$ is displayed in Table~\ref{ex1:chars}.
(See Definition~\ref{skeletoncharacter} and Proposition~\ref{prop:skeletoncharacter}.)

\begin{table}[h] 
\centering
\begin{tabular}{c||rrrrrrr}
\,$\chi^v_\sigma$ & $\sigma_1$  & $\sigma_2$  & $\sigma_3$  & $\sigma_4$  & $\sigma_5$ & $\sigma_6$ & $\sigma_7$ \\
\hline \hline \\[-4mm]
\,$v_1$           &  $*$   &  $1$  &  $0$   &  $0$   &  $0$   &  $*$  &  $-1$   \\
% \hline \\[-4mm]
\,$v_2$           &  $*$   &  $0$   &  $-1$  &  $-1$   &  $-2$   &  $1$  &  $*$   \\
% \hline \\[-4mm]
\,$v_3$           &  $-1$  &  $*$   &  $1$   &  $0$  &  $0$   &  $*$  &  $0$   \\
% \hline \\[-4mm]
\,$v_4$           &  $0$   &  $*$   &  $1$   &  $0$   &  $-1$  &  $0$  &  $*$   \\
% \hline \\[-4mm]
\,$v_5$           &  $0$   &  $-1$   &  $*$  &  $1$   &  $0$   &  $*$  &  $0$  \\
% \hline \\[-4mm]
\,$v_6$           &  $1$   &  $-1$   &  $*$  &  $1$   &  $-1$   &  $0$  &  $*$  \\
% \hline \\[-4mm]
\,$v_7$           &  $0$   &  $0$   &  $-1$  &  $*$   &  $1$   &  $*$  &  $0$  \\
% \hline \\[-4mm]
\,$v_8$           &  $1$   &  $0$   &  $-1$  &  $*$   &  $0$   &  $0$  &  $*$  \\
% \hline \\[-4mm]
\,$v_9$           &  $0$   &  $0$   &  $0$  &  $-1$   &  $*$   &  $*$  &  $1$  \\
% \hline \\[-4mm]
\,$v_{10}$       &  $2$  &  $1$   &  $1$   &  $0$   &  $*$   &  $-1$  &  $*$   \\
\hline \hline
%\vspace{-.1cm}
\end{tabular}
\vspace{.2cm}
\caption{\footnotesize{The skeleton character $\chi_A$ of $X_A$, where the symbol $*$ in the $i$-th line and $j$-th column of the table means that the vertex $v_i$ does not belong to the facet $\sigma_j$ of the polytope $\Gamma_{(5,2)}$.}}
\label{ex1:chars}
\end{table}

The edges of $\Gamma_{(5,2)}$ are designated by $\gamma_1,\ldots, \gamma_{25}$,
according to Table~\ref{tbl:edges}, where we write $\gamma=(i,j)$ to mean that
$\gamma$ is an edge connecting the vertices $v_i$ and $v_j$.
This model has $25$ edges: $12$ neutral edges,
$$
\gamma_2, \gamma_3, \gamma_4, \gamma_7, \gamma_8, \gamma_{10}, \gamma_{12}, \gamma_{16}, \gamma_{17}, \gamma_{18}, \gamma_{16}, \gamma_{22}, \gamma_{25},
$$
and $13$ flowing-edges,
$$
\gamma_1, \gamma_5, \gamma_6 \gamma_9, \gamma_{11}, \gamma_{13}, \gamma_{14}, \gamma_{15}, \gamma_{19}, \gamma_{20}, \gamma_{21}, \gamma_{23}, \gamma_{24}.
$$
The flowing-edge directed graph of  $\chi_A$  is depicted in Figure~\ref{graph_ex_1}.

\begin{table}[h]
\centering
\begin{tabular}[c]{lllll}
\\
\hline
\\[-3mm]
$\gamma_1=(1, 2)$ 		& $\gamma_6=(3,1)$    	& $\gamma_{11}=(2,8)$   & $\gamma_{16}=(3,7)$	& $\gamma_{21}=(8,6)$ \\
$\gamma_2=(3,4)$ 		& $\gamma_7=(2,4)$    	& $\gamma_{12}=(1,9)$   	& $\gamma_{17}=(4,8)$	& $\gamma_{22}=(5,9)$ \\
$\gamma_3=(5,6)$ 		& $\gamma_8=(1,5)$    	& $\gamma_{13}=(2,10)$	& $\gamma_{18}=(3,9)$	& $\gamma_{23}=(6,10)$ \\
$\gamma_4=(7,8)$ 		& $\gamma_9=(2,6)$    	& $\gamma_{14}=(5,3)$   & $\gamma_{19}=(4,10)$	& $\gamma_{24}=(9,7)$ \\
$\gamma_5=(10,9)$		& $\gamma_{10}=(1,7)$ & $\gamma_{15}=(6,4)$ 	& $\gamma_{20}=(7,5)$	& $\gamma_{25}=(8,10)$
\hspace{-.27cm} \vspace{1mm} \\
\hline \vspace{-.2cm}
\end{tabular}
\caption{\footnotesize{Edge labels.}}\label{tbl:edges}
\end{table}

From this graph we can see that
$$ S=\{ \, \gamma_1=(1,2) \, \}  $$
is a  structural set for $\chi_A$ (see Definition~\ref{structural:set})
whose $S$-branches denoted by $\xi_1,\dots,\xi_5$ are displayed in Table~\ref{branch_table_ex1},
where we write $\xi_i=(j, k, l, \dots)$ to indicate that
$\xi_i$ is a path from vertex $v_j$ passing along vertices $v_k, v_l, \dots$\,.

\begin{figure}[h]
\begin{center}
\includefigure{width=10cm}{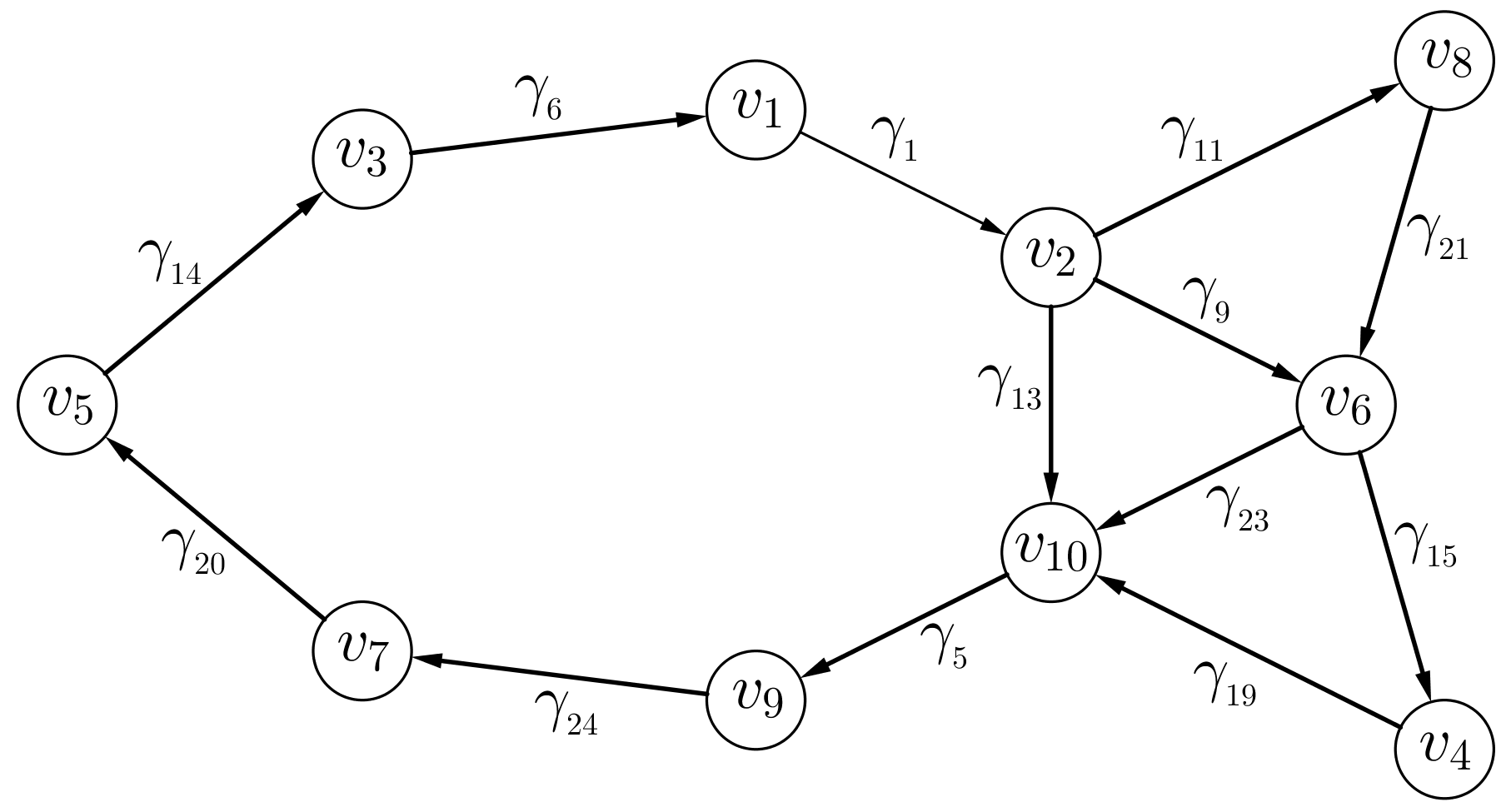}
\vspace{-.3cm}
\caption{\footnotesize{The oriented graph of $\chi_A$.}} \label{graph_ex_1}
\end{center} 
\end{figure}

\begin{table}[h]
\centering
\begin{tabular}{|c||l|}
\hline \\[-3mm]
From\textbackslash To & \multicolumn{1}{c|}{$\gamma_1=(1,2))$} \\[1mm]
\hline \hline
\\[-3mm]
\multirow{5}{*}{$\gamma_1=(1,2)$} 	& $\xi_1= (1, 2, 10, 9, 7, 5, 3, 1, 2)$ \vspace{1mm} \\
														& $\xi_2=(1, 2, 6, 10, 9, 7, 5, 3, 1, 2)$ \vspace{1mm} \\
  														& $\xi_3=(1, 2, 6, 4, 10, 9, 7, 5, 3, 1, 2)$ \vspace{1mm} \\
   														& $\xi_4=(1, 2, 8, 6, 10, 9, 7, 5, 3, 1, 2)$ \vspace{1mm} \\
    														& $\xi_5=(1, 2, 8, 6, 4, 10, 9, 7, 5, 3, 1, 2)$ \vspace{1mm} \\
\hline \hline
\end{tabular}
\vspace{.2cm}
\caption{\footnotesize{$S$-branches of  $\chi_A$.}} \label{branch_table_ex1}
\end{table}

Considering the vertex ${v_1}$, which has the incoming edge $v_3\buildrel\gamma_6 \over\longrightarrow {v_1}$ and the outgoing edge  ${v_1} \buildrel\gamma_1\over\longrightarrow v_2$, we will now illustrate Proposition~\ref{prop:poincare-is-Poisson-vertex}.

For $i=1,2,3$, the constant Poisson structures $B_{v_i}$  induced by asymptotic rescaling on each $\Pi_{v_i}$ (see Lemma~\ref{rescale:Poisson-from}) can be easily calculated:

$$
B_{v_1}=\left(
\begin{array}{ccccc}
 0 & 2 & 1 & 1 & 1 \\
 -2 & 0 & 1 & 0 & 1 \\
 -1 & -1 & 0 & 1 & 1 \\
 -1 & 0 & -1 & 0 & 2 \\
 -1 & -1 & -1 & -2 & 0 \\
\end{array}
\right),
\quad
B_{v_2}=\left(
\begin{array}{ccccc}
 0 & 2 & 1 & 1 & -1 \\
 -2 & 0 & 1 & 0 & -1 \\
 -1 & -1 & 0 & 1 & -1 \\
 -1 & 0 & -1 & 0 & -2 \\
 1 & 1 & 1 & 2 & 0 \\
\end{array}
\right)
$$
and
$$
B_{v_3}=\left(
\begin{array}{ccccc}
 0 & -2 & -1 & -1 & -1 \\
 2 & 0 & 2 & 1 & 0 \\
 1 & -2 & 0 & 1 & 0 \\
 1 & -1 & -1 & 0 & 1 \\
 1 & 0 & 0 & -1 & 0 \\
\end{array}
\right),
$$
and by~\eqref{representing-matrix-s} we get 

$$
(\pi^{v_1}_{{\rm Dirac}, 2})^\sharp =(\pi^{v_2}_{{\rm Dirac}, 0})^\sharp=\left(
\begin{array}{ccccc}
 0 & 1 & 0 & -1 & 0 \\
 -1 & 0 & 1 & 0 & 0 \\
 0 & -1 & 0 & 1 & 0 \\
 1 & 0 & -1 & 0 & 0 \\
 0 & 0 & 0 & 0 & 0 \\
\end{array}
\right)
$$
and
$$
(\pi^{v_3}_{{\rm Dirac}, 2})^\sharp=(\pi^{v_0}_{{\rm Dirac}, 1})^\sharp=\left(
\begin{array}{ccccc}
 0 & 0 & 0 & 0 & 0 \\
 0 & 0 & 1 & 0 & -1 \\
 0 & -1 & 0 & 1 & 0 \\
 0 & 0 & -1 & 0 & 1 \\
 0 & 1 & 0 & -1 & 0 \\
\end{array}
\right).
$$

The matrix $(\pi^{v_2}_{{\rm Dirac}, 0})^\sharp$ represents the Poisson structure on $\Pi_{\gamma_6}$ in the coordinates $(y_2,y_3,y_4,y_5,y_7)$.
 Notice that $y_2=0$ on $\Pi_{\gamma_6}$. Similarly, the matrix  $(\pi^{v_0}_{{\rm Dirac}, 1})^\sharp$ represents the Poisson structure on $\Pi_{\gamma_1}$ in the same coordinates $(y_2,y_3,y_4,y_5,y_7)$.
Notice again that $y_7=0$ on $\Pi_{\gamma_1}$.
 Now the matrix representative of $L_{\gamma_6\gamma_1}$ in the coordinates $(y_2,y_3,y_4,y_5,y_7)$ is 
 \[L_{\gamma_6\gamma_1}=\left(
\begin{array}{ccccc}
 0 & 0 & 0 & 0 & 1 \\
 0 & 1 & 0 & 0 & 0 \\
 0 & 0 & 1 & 0 & 0 \\
 0 & 0 & 0 & 1 & 0 \\
 0 & 0 & 0 & 0 & 0 \\
\end{array}
\right).\]
A simple calculation shows that 
\[L_{\gamma_6\gamma_1}\,(\pi^{v_2}_{{\rm Dirac}, 0})^\sharp\,(L_{\gamma_6\gamma_1})^t=(\pi^{v_0}_{{\rm Dirac}, 1})^\sharp,\]
confirming the fact that the asymptotic Poincar\'e map $L_{\gamma_6\gamma_1}$ is Poisson
(see~\eqref{eq:Poissoncondition2} in Definition~\ref{def:Poissonmap}).

Consider now  the subspaces of $\Rr^7$
$$
H=\left\{(x_1,\dots ,x_7)\in\Rr^7\,\,:\,\,\sum_{i=1}^5 x_i=1,\,\, \sum_{i=6}^7 x_i=1 \right\}
$$
and
$$
H_0=\left\{(x_1,\dots ,x_7)\in\Rr^7\,\,:\,\,\sum_{i=1}^5 x_i=0,\,\, \sum_{i=6}^7 x_i=0 \right\}.
$$

For the given matrix $A$, its null space $\Ker(A)$ has dimension $3$.
Take a non-zero vector $w\in \Ker(A)\cap H_0$.
For example,
$$
w=\left( -2, 3, -2, 3, -2, -3, 3 \right).
$$
The set of equilibria  
of the natural extension of $X_A$ to the affine hyperplane $H$
is
$$ \Eq(X_A)=\Ker(A)\cap H=\{q+tw \colon t\in\Rr\}\,. $$

The Hamiltonian of $X_A$ is the function $h_q:\Gamma_{(5,2)}\to\Rr$  
$$ h_q(x):=\sum_{i=1}^7 q_i\log x_i\,,$$
where $q_i$ is the $i$-th component of the equilibrium point $q$ (see Theorem~\ref{conservative:hamiltonian}).
Another  integral of motion of $X_A$ is the function $h_w:\Gamma_{(5,2)}\to\Rr$ 
$$ h_w(x):=\sum_{i=1}^7 w_i\log x_i\,,$$
where $w_i$ is the $i$-th component of $w$,
which is a Casimir of the underlying Poisson structure.

The skeletons  of $h_q$ and
$h_w$ are respectively
$\eta_q, \eta_w:\CC^\ast(\Gamma_{(5,2)})\to\Rr$,
$$ \eta_q(y):=\sum_{i=1}^7 q_iy_i \quad \textrm{and} \quad
\eta_w(y):=\sum_{i=1}^7 w_i y_i\, , $$
(see Proposition~\ref{asymptotic-constants}),
which we use to define $\eta:\CC^\ast(\Gamma_{(5,2)})\to\Rr^2$,
$$
\eta(y):=(\eta_q(y),\eta_w(y)).
$$

Consider the skeleton flow map  $\skPoin{\chi}{S}:\skDomPoin{\chi}{S} \to\Pi_{S}$ of $\chi_A$
(see Definition~\ref{def skeleton flow map}).
Notice that $\skDomPoin{\chi}{S}=\Pi_{\gamma_1}$, where 
by Proposition~\ref{partition},
$\Pi_{\gamma_1}=\bigcup_{i=1}^5\skDomPoin{\chi}{\xi_i}\pmod{0}$.
By Proposition~\ref{asymptotic-constants} the function  $\eta$ is invariant under $\pi_S$.
Moreover, the  skeleton flow map $\pi_S$ is  Hamiltonian with respect to a Poisson structure on the system of cross sections $\Pi_S$
(see Theorem~\ref{main theorem}).

For all $i=1,\dots,5$, the polyhedral cone $\Pi_{\xi_i}$ has dimension $4$.
Hence, each polytope $\Delta_{\xi_i,c}:=\Pi_{\xi_i}\cap \eta^{-1}(c)$   is a $2$-dimensional polygon.

\begin{remark}
We came from dimension $5$ to $2$.
This will happen for any other conservative polymatrix replicator with the same number of groups and the same number of strategies per group.
In fact when $n-p$ is odd, where $n$ is the total number of strategies in the population and $p$ is the number of groups,  we will have a minimum drop of $3$ dimensions.
The reason is that  a Poisson manifold with odd dimension (in this example is $5$) has at least one Casimir, and considering the transversal section we drop two dimensions from the symplectic part (not from the Casimir).
So in total  we drop a minimum of 3 dimensions.
If the original Poisson structure has more Casimirs, the invariant submanifolds yielded geometrically, are going to have even less dimensions, which is good as long as it not zero.
In the case of an even dimension, the drop will be at least of two dimensions.
\end{remark}

By invariance of   $\eta$,
the set $\Delta_{S,c}$ is also invariant under $\pi_S$. 
Consider now the restriction ${\pi_S}_{\vert \Delta_{S,c}}$ of $\skPoin{\chi}{S}$ to $\Delta_{S,c}$.
This is a piecewise affine area preserving map.
Figure~\ref{fig:plot_orbits_ex1} shows 
the domain  $\Delta_{S,c}$ and $20\,000$  iterates by  ${\pi_S}$  of a point in $\Delta_{S,c}$.
Following the itinerary of a random point we have picked the following
heteroclinic cycle consisting of $4$ $S$-branches
\begin{align*}\label{concat_path_ex1}
& \xi:=(\xi_4,\xi_1,\xi_3,\xi_4)\,.
\end{align*}

The   map $\pi_\xi$   is
represented by the matrix
$$
M_\xi =\left(
\begin{array}{ccccccc}
 0 & 0 & 0 & 0 & 0 & 0 & 0 \\
 1 & -1 & 1 & -\frac{13}{2} & 2 & -\frac{3}{2} & 0 \\
 1 & 0 & 1 & -1 & 1 & 2 & 0 \\
 -1 & 2 & -1 & \frac{15}{2} & -2 & \frac{5}{2} & 1 \\
 0 & 0 & 0 & 1 & 0 & 1 & 0 \\
 0 & 0 & 0 & 0 & 0 & 0 & 0 \\
 0 & 0 & 0 & 0 & 0 & 0 & 0 \\
\end{array}
\right)\,.
$$

The eigenvalues of $M_\xi$, besides $0$ and $1$ (with geometric multiplicity $3$ and $2$, respectively), are
$$ \lambda_u=5.31174..., \quad \textrm{and} \quad \lambda_s=\lambda_u^{-1}. $$

\begin{remark}
The determinant of  $(\pi^{v_2}_{{\rm Dirac}, 0})^\sharp$ is zero which means that the Poisson structure on
$\Pi_{\gamma_6}$ is non-degenerate.
So, $\Pi_6$ has a two dimensional symplectic foliation invariant under the asymptotic Poincar\'e map.
The leaf of this foliation are affine spaces parallel to the kernel of
\[ (\pi^{v_2}_{{\rm Dirac}, 0})^\sharp|_{\Pi_{\gamma_6}}=\left(
\begin{array}{cccc}
 0 & 1 & 0 & -1  \\
 -1 & 0 & 1 & 0  \\
 0 & -1 & 0 & 1 \\
 1 & 0 & -1 & 0 \\
 \end{array}
\right),\]
i.e. the set of the form 
$$
\{(q_3, q_4, q_5, q_7)+(s, t, -t, -s)\,|\, (q_3, q_4, q_5, q_7)\in \Pi_{\gamma_6}\,\, s,t\in\mathbb{R}\}\cap\Pi_{\gamma_6}.
$$
The restriction of the asymptotic Poincar\'e map to these leaves is a symplectic map.
One important consequence is that its eigenvalues are of the form $\lambda$ and $\frac{1}{\lambda}$.
\end{remark}

\begin{figure}[h]
\includefigure{width=9cm}{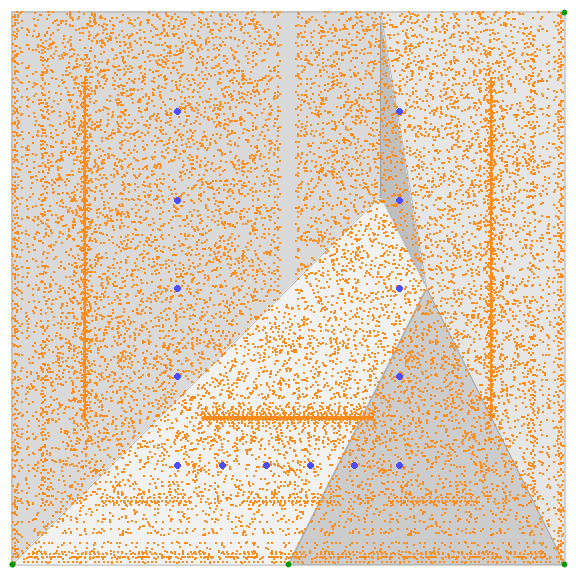}
\caption{\footnotesize{Plot of $20\,000$  iterates (in orange) by  ${\pi_S}$  of a point in $\Delta_{S,c}$, with $c=\left( \frac{1}{3},-0.5 \right)$,
the iterates of the periodic point $\mathbf{p_{\scriptsize{0}}}$ (in green) of the skeleton flow map $\skPoin{\chi}{S}$ with period $4$, and
the iterates of another periodic point of the skeleton flow map $\skPoin{\chi}{S}$ with period $14$ (in blue).}}\label{fig:plot_orbits_ex1}
\end{figure}

An eigenvector associated to the eigenvalue $1$ is
$$
\mathbf{p_{\scriptsize{0}}}=(0., 0.5, 1., 0., 0., 0., 0.)\,.
$$
We  have chosen  $c:=(c_1,c_2)=\left( \frac{1}{3},-0.5 \right)$ so that $\eta(\mathbf{p_{\scriptsize{0}}})=c$, \ie
$\mathbf{p_{\scriptsize{0}}}\in \Delta_{S,c}$.
In fact we have $\mathbf{p_{\scriptsize{0}}}\in\Delta_{\xi_1,c}\subset\Delta_{\gamma_1,c}$. 
Hence $\mathbf{p_{\scriptsize{0}}}$ is a periodic point of the skeleton flow map $\skPoin{\chi}{S}$ with period $4$
(whose iterates are represented by the green dots in Figure~\ref{fig:plot_orbits_ex1}).

Figure~\ref{fig:plot_orbits_ex1} also depicts
the polygons  $\Delta_{\xi_1,c}, \Delta_{\xi_2,c}, \Delta_{\xi_3,c}, \Delta_{\xi_4,c}, \Delta_{\xi_5,c}$ contained in $\Delta_{\gamma_1}$,
and the orbit of another periodic point of the skeleton flow map $\skPoin{\chi}{S}$ with period $14$
(represented by the blue dots in Figure~\ref{fig:plot_orbits_ex1}).

Following the procedure to analyze the dynamics in~\cite[Section $9$]{ADP2020} and using Theorem $8.7$ also in~\cite{ADP2020}  we could deduce the existence of
chaotic behavior for the flow of $X_A$ in some level set
$h_q^{-1}(c_1/\epsilon)\cap h_w^{-1}(c_2/\epsilon)$,
with the $c$ chosen above and for all small enough  $\epsilon>0$.

%%%%%%%%%%%%%%%%%%%%%%%%%%%%%%%%%%%%%%%%%%%%%%%%%%%%%%%%%%%%%%%%%%%%%%%%%%%%

\section*{Acknowledgements}

The first author was supported by mathematics department of UFMG. 
The second author was supported by
FCT - Funda\c{c}\~{a}o para a Ci\^{e}ncia e a Tecnologia,
under the projects UIDB/04561/2020 and UIDP/04561/2020.
The third author was supported by
FCT - Funda\c{c}\~{a}o para a Ci\^{e}ncia e a Tecnologia,
under the project UIDB/05069/2020.

%%%%%%%%%%%%%%%%%%%%%%%%%%%%%%%%%%%%%%%%%%%%%%%%%%%%%%%%%%%%%%%%%%%%%%%%%%%%

\nocite{*}
\bibliographystyle{amsplain}  
%\bibliography{references} 

% \bib, bibdiv, biblist are defined by the amsrefs package.
\begin{bibdiv}
\begin{biblist}

\bib{hassan-JGM-2020}{article}{
      author={Alishah, Hassan~Najafi},
       title={Conservative replicator and Lotka-Volterra equations in the context of Dirac$\backslash$big-isotropic structures},
        date={2020},
        ISSN={2164-6066},
     journal={Journal of Geometric Mechanics},
      volume={12},
      number={2},
       pages={149\ndash 164},
  url={https://www.aimsciences.org/article/doi/10.3934/jgm.2020008},
}

\bib{AD2015}{article}{
      author={Alishah, Hassan~Najafi},
      author={Duarte, Pedro},
       title={Hamiltonian evolutionary games},
        date={2015},
        ISSN={2164-6066},
     journal={Journal of Dynamics and Games},
      volume={2},
      number={1},
       pages={33\ndash 49},
  url={http://aimsciences.org/journals/displayArticlesnew.jsp?paperID=11237},
}

\bib{ADP2020}{article}{
	author = {Hassan Najafi Alishah},
	author = {Pedro Duarte},
	author = {Telmo Peixe},
	doi = {10.1088/1361-6544/ab49e6},
	url = {https://iopscience.iop.org/article/10.1088/1361-6544/ab49e6},
	date = {2019},
	month = {dec},
	publisher = {{IOP} Publishing},
	volume = {33},
	number = {1},
	pages = {469--510},
	title = {Asymptotic Poincar\'e maps along the edges of polytopes},
	journal = {Nonlinearity},
	review={\MR{4039779}},
}

\bib{ADP2015}{article}{
      author = {Hassan Najafi Alishah},
      author = {Pedro Duarte},
      author = {Telmo Peixe},
      doi = {10.3934/jdg.2015.2.157},
      url = {http://aimsciences.org//article/id/5e726eb1-b219-4936-9143-06f08b191b6a},
      date={2015},
      publisher = {American Institute of Mathematical Sciences},
      volume = {2},
      number = {2},
      pages = {157--185},
      title={Conservative and Dissipative Polymatrix Replicators},
      journal={Journal of Dynamics and Games},
      review={\MR{3436357}},
}

\bib{MR3223875}{article}{
   author={Alishah, Hassan Najafi},
   author={Lopes Dias, Jo{\~a}o},
   title={Realization of tangent perturbations in discrete and continuous
   time conservative systems},
   journal={Discrete Contin. Dyn. Syst.},
   volume={34},
   date={2014},
   number={12},
   pages={5359--5374},
   issn={1078-0947},
   review={\MR{3223875}},
   doi={10.3934/dcds.2014.34.5359},
}

\bib{MR2128714}{article}{
   author={Crainic, Marius},
   author={Fernandes, Rui Loja},
   title={Integrability of Poisson brackets},
   journal={J. Differential Geom.},
   volume={66},
   date={2004},
   number={1},
   pages={71--137},
   issn={0022-040X},
   review={\MR{2128714 (2006f:53119)}},
}

\bib{paul-dirac-book}{book}{
      author={Dirac, Paul A. M.},
       title={Lectures on quantum mechanics},
      series={Belfer Graduate School of Science Monographs Series. 2.},
   publisher={Belfer Graduate School of Science, New York},
        date={1964},
   review={ISBN 9780486417134. \MR{2220894}}
}

%\bib{Dua2011}{incollection}{
%      author={Duarte, Pedro},
%       title={Hamiltonian systems on polyhedra},
%        date={2011},
%   booktitle={Dynamics, games and science. {II}},
%      series={Springer Proc. Math.},
%      volume={2},
%   publisher={Springer, Heidelberg},
%       pages={257\ndash 274},
%         url={http://dx.doi.org/10.1007/978-3-642-14788-3_21},
%      review={\MR{2883285}},
%}

\bib{DFO1998}{article}{
      author={Duarte, Pedro},
      author={Fernandes, Rui~L.},
      author={Oliva, Waldyr~M.},
       title={Dynamics of the attractor in the {L}otka-{V}olterra equations},
        date={1998},
        ISSN={0022-0396},
     journal={J. Differential Equations},
      volume={149},
      number={1},
       pages={143\ndash 189},
         url={http://dx.doi.org/10.1006/jdeq.1998.3443},
      review={\MR{1643678 (99h:34075)}},
}

\bib{DP2012}{article}{
      author={Duarte, Pedro},
      author={Peixe, Telmo},
       title={Rank of stably dissipative graphs},
        date={2012},
        ISSN={0024-3795},
     journal={Linear Algebra Appl.},
      volume={437},
      number={10},
       pages={2573\ndash 2586},
         url={http://dx.doi.org/10.1016/j.laa.2012.06.015},
      review={\MR{2964708}},
}

\bib{MR2178041}{book}{
   author={Dufour, Jean-Paul},
   author={Zung, Nguyen Tien},
   title={Poisson structures and their normal forms},
   series={Progress in Mathematics},
   volume={242},
   publisher={Birkh\"auser Verlag, Basel},
   date={2005},
   pages={xvi+321},
   isbn={978-3-7643-7334-4},
   isbn={3-7643-7334-2},
   review={\MR{2178041 (2007b:53170)}},
}

\bib{DK}{book}{
   author={Duistermaat, J. J.},
   author={Kolk, J. A. C.},
   title={Lie groups},
   series={Universitext},
   publisher={Springer-Verlag, Berlin},
   date={2000},
   pages={viii+344},
   isbn={3-540-15293-8},
   review={\MR{1738431 (2001j:22008)}},
   doi={10.1007/978-3-642-56936-4},
}

%\bib{Eld2013}{book}{
%      author={Eldering, Jaap},
%       title={Normally hyperbolic invariant manifolds},
%      series={Atlantis Studies in Dynamical Systems},
%   publisher={Atlantis Press, Paris},
%        date={2013},
%      volume={2},
%        ISBN={978-94-6239-002-7; 978-94-6239-003-4},
%         url={http://dx.doi.org/10.2991/978-94-6239-003-4},
%        note={The noncompact case},
%      review={\MR{3098498}},
%}

\bib{FO}{article}{
   author={Fernandes, Rui Loja},
   author={Ortega, Juan-Pablo},
   author={Ratiu, Tudor S.},
   title={The momentum map in Poisson geometry},
   journal={Amer. J. Math.},
   volume={131},
   date={2009},
   number={5},
   pages={1261--1310},
   issn={0002-9327},
   review={\MR{2555841 (2011f:53199)}},
   doi={10.1353/ajm.0.0068},
}

\bib{HS1998}{book}{
      author={Hofbauer, Josef},
      author={Sigmund, Karl},
       title={Evolutionary games and population dynamics},
   publisher={Cambridge University Press},
     address={Cambridge},
        date={1998},
}

\bib{H1971/72}{article}{
      author={Howson, Joseph~T., Jr.},
       title={Equilibria of polymatrix games},
        date={1971/72},
        ISSN={0025-1909},
     journal={Management Sci.},
      volume={18},
       pages={312\ndash 318},
      review={\MR{0392000 (52 \#12818)}},
}

\bib{MR1869601}{book}{
   author={Pflaum, Markus J.},
   title={Analytic and geometric study of stratified spaces},
   series={Lecture Notes in Mathematics},
   volume={1768},
   publisher={Springer-Verlag, Berlin},
   date={2001},
   pages={viii+230},
   isbn={3-540-42626-4},
   review={\MR{1869601 (2002m:58007)}},
}

\bib{schuster1981coyness}{article}{
      author={Schuster, Peter},
      author={Sigmund, Karl},
       title={Coyness, philandering and stable strategies},
        date={1981},
        ISSN={0003-3472},
     journal={Animal Behaviour},
      volume={29},
      number={1},
       pages={186 \ndash  192},
  url={http://www.sciencedirect.com/science/article/pii/S0003347281801650},
}

\bib{schuster1981selfregulation}{article}{
      author={Schuster, Peter},
      author={Sigmund, Karl},
      author={Hofbauer, Josef},
      author={Wolff, Robert},
       title={Self-regulation of behaviour in animal societies. {II}. {G}ames
  between two populations without self-interaction},
        date={1981},
        ISSN={0340-1200},
     journal={Biol. Cybernet.},
      volume={40},
      number={1},
       pages={9\ndash 15},
         url={http://dx.doi.org/10.1007/BF00326676},
      review={\MR{609926 (82e:92039b)}},
}

%\bib{Shub1987}{book}{
%      author={Shub, Michael},
%       title={Global stability of dynamical systems},
%   publisher={Springer-Verlag, New York},
%        date={1987},
%        ISBN={0-387-96295-6},
%         url={http://dx.doi.org/10.1007/978-1-4757-1947-5},
%        note={With the collaboration of Albert Fathi and R{\'e}mi Langevin,
%  Translated from the French by Joseph Christy},
%      review={\MR{869255 (87m:58086)}},
%}
%
%\bib{Sma1976}{article}{
%      author={Smale, S.},
%       title={On the differential equations of species in competition},
%        date={1976},
%        ISSN={0303-6812},
%     journal={J. Math. Biol.},
%      volume={3},
%      number={1},
%       pages={5\ndash 7},
%      review={\MR{0406579 (53 \#10366)}},
%}

\bib{smith1973logic}{article}{,
  title={The logic of animal conflict},
  author={Smith, J Maynard and Price, George R},
  journal={Nature},
  volume={246},
  number={5427},
  pages={15--18},
  year={1973},
  publisher={Nature Publishing Group}
}

%\bib{Smi1986}{article}{
%      author={Smith, Hal~L.},
%       title={On the asymptotic behavior of a class of deterministic models of
%  cooperating species},
%        date={1986},
%        ISSN={0036-1399},
%     journal={SIAM J. Appl. Math.},
%      volume={46},
%      number={3},
%       pages={368\ndash 375},
%         url={http://dx.doi.org/10.1137/0146025},
%      review={\MR{841454 (87j:34066)}},
%}

\bib{TJ1978}{article}{
      author={Taylor, Peter~D.},
      author={Jonker, Leo~B.},
       title={Evolutionarily stable strategies and game dynamics},
        date={1978},
        ISSN={0025-5564},
     journal={Math. Biosci.},
      volume={40},
      number={1-2},
       pages={145\ndash 156},
      review={\MR{0489983 (58 \#9351)}},
}

\bib{MR2554208}{book}{
   author={Treschev, Dmitry},
   author={Zubelevich, Oleg},
   title={Introduction to the perturbation theory of Hamiltonian systems},
   series={Springer Monographs in Mathematics},
   publisher={Springer-Verlag, Berlin},
   date={2010},
   pages={x+211},
   isbn={978-3-642-03027-7},
   review={\MR{2554208 (2011b:37116)}},
   doi={10.1007/978-3-642-03028-4},
}

%\bib{DZ1998}{article}{
%      author={van~den Driessche, P.},
%      author={Zeeman, M.~L.},
%       title={Three-dimensional competitive {L}otka-{V}olterra systems with no
%  periodic orbits},
%        date={1998},
%        ISSN={0036-1399},
%     journal={SIAM J. Appl. Math.},
%      volume={58},
%      number={1},
%       pages={227\ndash 234},
%         url={http://dx.doi.org/10.1137/S0036139995294767},
%      review={\MR{1610080 (99g:92026)}},
%}
%
%\bib{Volt1990}{book}{
%      author={Volterra, Vito},
%       title={Le\c cons sur la th\'eorie math\'ematique de la lutte pour la
%  vie},
%      series={Les Grands Classiques Gauthier-Villars. [Gauthier-Villars Great
%  Classics]},
%   publisher={\'Editions Jacques Gabay, Sceaux},
%        date={1990},
%        ISBN={2-87647-066-7},
%        note={Reprint of the 1931 original},
%      review={\MR{1189803 (93k:92011)}},
%}
%
%\bib{Ya1968}{article}{
%      author={Yanovskaya, E.~B.},
%       title={Equilibrium points in polymatrix games (in russian)},
%        date={1968},
%     journal={Latvian Mathematical Collection},
%}
%
\bib{CALVO2010259}{article}{
          author={Calvo I.},
          author={Falceto F.},
          author={Zambon M.}
         title={ Deformation of Dirac structures along isotropic subbundles}
        date={2010},
     journal={Reports on Mathematical Physics},
       volume={65},
     number={2},
       pages={259\ndash 269},
         url={http://www.sciencedirect.com/science/article/pii/S0034487710800205},
 }

%\bib{Zee1993}{article}{
%      author={Zeeman, M.~L.},
%       title={Hopf bifurcations in competitive three-dimensional
%  {L}otka-{V}olterra systems},
%        date={1993},
%        ISSN={0268-1110},
%     journal={Dynam. Stability Systems},
%      volume={8},
%      number={3},
%       pages={189\ndash 217},
%         url={http://dx.doi.org/10.1080/02681119308806158},
%      review={\MR{1246002 (94j:34044)}},
%}
%
%\bib{Zee1995}{article}{
%      author={Zeeman, M.~L.},
%       title={Extinction in competitive {L}otka-{V}olterra systems},
%        date={1995},
%        ISSN={0002-9939},
%     journal={Proc. Amer. Math. Soc.},
%      volume={123},
%      number={1},
%       pages={87\ndash 96},
%         url={http://dx.doi.org/10.2307/2160613},
%      review={\MR{1264833 (95c:92019)}},
%}
%
%\bib{ZL2010}{article}{
%      author={Zhao, Xiaohua},
%      author={Luo, Jigui},
%       title={Classification and dynamics of stably dissipative lotka-volterra
%  systems},
%        date={2010},
%        ISSN={0020-7462},
%     journal={International Journal of Non-Linear Mechanics},
%      volume={45},
%      number={6},
%       pages={603 \ndash  607},
%  url={http://www.sciencedirect.com/science/article/pii/S0020746209001346},
%        note={Nonlinear Dynamics of Biological Systems},
%}

\end{biblist}
\end{bibdiv}

\end{document}